\newtheorem{thm}{Theorem}[section]
\newtheorem{lem}[thm]{Lemma}
\newtheorem{cor}[thm]{Corollary}
\newtheorem{prop}[thm]{Proposition}
\newtheorem{prob}[thm]{Problem}
\theoremstyle{definition}
\newtheorem{defin}[thm]{Definition}
\newtheorem{rem}[thm]{Remark}
\newtheorem{remark}[thm]{Remark}
\newtheorem{thmAlfa}{Theorem}
\newtheorem{corAlfa}[thmAlfa]{Corollary}
\def\N{{\mathbb N}}
\def\M{{\mathbb M}}
\def\L{{\mathbb L}}
\def\R{{\mathbb R}}
\def\cA{{\mathcal A}}
\def\cE{{\mathcal E}}
\def\cM{{\mathcal M}}
\newcommand{\keq}{\!=\!}
\newcommand{\kleq}{\!\leq\!}
\newcommand{\kge}{\!\ge\!}
\newcommand{\kle}{\!<\!}
\newcommand{\kgr}{\!>\!}
\newcommand\kin{\!\in\!}
\newcommand{\ksubset}{\!\subset\!}
\newcommand{\kplus}{\!+\!}
\newcommand{\lb}{\bar l}
\newcommand{\nb}{\bar n}
\newcommand{\mb}{\bar m}
\newcommand{\eb}{\bar e}
\newcommand{\xb}{\bar x}
\def\vp{\varepsilon}
\newcommand{\xt}{{\tilde x}}
\newcommand{\cof}{\text{\rm cof}}
\newcommand{\spa}{\text{\rm span}}
\newcommand{\supp}{\text{\rm supp}}
\newcommand{\ie}{{\it i.e.,\ }}
\newcommand{\co}{\mathrm{c}_0}
\newcommand{\diam}{\text{\rm diam}}
\newcommand{\Lip}{\text{\rm Lip}}
\newcommand{\bin}{\ensuremath{\mathsf{B}}}
\newcommand{\ham}{\ensuremath{\mathsf{H}}}
\begin{document}

\allowdisplaybreaks


\title{the geometry of Hamming-type metrics and their embeddings into Banach spaces}
\author{F.~Baudier}
\address{F.~Baudier, Department of Mathematics, Texas A\&M University, College Station, TX 77843, USA}
\email{florent@math.tamu.edu}

\author{G.~Lancien}
\address{G.~Lancien, Laboratoire de Math\'ematiques de Besan\c con, Universit\'e Bourgogne Franche-Comt\'e, 16 route de Gray, 25030 Besan\c con C\'edex, Besan\c con, France}
\email{gilles.lancien@univ-fcomte.fr}

\author{P.~Motakis}
\address{P.~Motakis, Department of Mathematics, University of
Illinois at Urbana-Champaign, Urbana, IL 61801, U.S.A.}
\email{pmotakis@illinois.edu}

\author{Th.~Schlumprecht}
\address{Th.~Schlumprecht, Department of Mathematics, Texas A\&M University, College Station, TX 77843-3368, USA, and Faculty of Electrical Engineering,
Czech Technical University in Prague, Zikova 4, 16627, Prague, Czech Republic}
\email{schlump@math.tamu.edu}

\thanks{The first named author was supported by the National Science
Foundation under Grant Number DMS-1800322.
The second named author was supported by the French
``Investissements d'Avenir'' program, project ISITE-BFC (contract
 ANR-15-IDEX-03).
The third named author was  supported by the National Science Foundation
under Grant Numbers DMS-1600600 and DMS-1912897.
The fourth named author was supported by the National Science Foundation under Grant Numbers DMS-1464713 and DMS-1711076 .}
\keywords{}
\subjclass[2010]{46B06, 46B20, 46B85, 46T99, 05C63, 20F65}

\maketitle
\begin{abstract}
Within the class of reflexive Banach spaces, we prove a metric characterization of the class of asymptotic-$\co$ spaces in terms of a bi-Lipschitz invariant which involves metrics that generalize the Hamming metric on $k$-subsets of $\N$. We apply this characterization to show that the class of separable, reflexive, and asymptotic-$\co$ Banach spaces is non-Borel co-analytic. Finally, we introduce a relaxation of the asymptotic-$\co$ property, called the asymptotic-subsequential-$\co$ property, which is a partial obstruction to the equi-coarse embeddability of the sequence of Hamming graphs. We present examples of spaces that are asymptotic-subsequential-$\co$. In particular $T^*(T^*)$ is asymptotic-subsequential-$\co$ where $T^*$ is Tsirelson's original space.
\end{abstract}

 \setcounter{tocdepth}{3}
\tableofcontents
\section{Introduction}\label{S:1}
A central theme of the Ribe Program is to find metric characterizations of linear properties of Banach spaces. We refer to \cite{Naor12}, \cite{Ball13}, and \cite{Naor18} for a discussion of the origins, motivations, applications, and the depth of the Ribe Program. There are various forms of metric characterizations, the most common ones being expressed in terms of Poincar\'e-type/concentration inequalities, or in terms of containment in a metric sense of a sequence of graph metrics. If a class $\mathcal{C}$ of separable Banach spaces coincides with the class of Banach spaces equi-coarsely (or equi-bi-Lipschitzly) containing some sequence $(M_k)_k$ of metric spaces, then $\mathcal{C}$ would be an analytic class since it can be shown that the latter class is analytic (in the Effros-Borel structure).

The following metric characterization, in terms of a concentration inequality, was proved in \cite{BLMS_JIMJ20} and was used to show that the class of reflexive and asymptotic-$\co$ Banach spaces is coarsely rigid.
\begin{thm}[\cite{BLMS_JIMJ20}]\label{thm}
A Banach space $X$ is reflexive and asymptotically $c_0$ if and only if there exists $C\geq 1$ such that for every $k\in\N$ and every Lipschitz map $f:\big([\N]^k,d^{(k)}_\ham\big)\to X$ there exists an infinite subset $\M$ of $\N$ so that
\begin{equation}\label{eq:concentration}
\sup_{\mb,\nb\in[\M]^k}\|f(\mb)-f(\nb)\| \leq C\mathrm{Lip}(f).
\end{equation}
\end{thm}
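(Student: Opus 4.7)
The plan is to prove the two implications separately, combining an infinite Ramsey stabilization and the asymptotic structure of $X$ for the forward direction, and explicit Lipschitz constructions on $[\N]^k$ for the reverse direction.

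For the forward direction, fix $k\in\N$ and a Lipschitz map $f\colon [\N]^k\to X$ with $\Lip(f)=1$. First I would apply the infinite Ramsey theorem iteratively, together with reflexivity and a diagonal argument, to extract an infinite $\M\subset\N$ so that, for every $i\in\{0,1,\dots,k\}$ and every $m_1<\cdots<m_i$ in $\M$, the iterated weak limit
\begin{equation*}
g_i(m_1,\ldots,m_i) \;=\; w\text{-}\lim_{m_{i+1}\to\infty}\cdots\, w\text{-}\lim_{m_k\to\infty}\, f(m_1,\ldots,m_k)
\end{equation*}
exists in $X$; reflexivity is needed so that every bounded sequence admits a weakly convergent subsequence, and the Lipschitz bound guarantees boundedness of the relevant nets. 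For any $\mb=(m_1,\ldots,m_k)\in[\M]^k$ one has the telescoping identity
\begin{equation*}
f(\mb)-g_0 \;=\; \sum_{i=1}^k \bigl(g_i(m_1,\ldots,m_i)-g_{i-1}(m_1,\ldots,m_{i-1})\bigr),
\end{equation*}
and an analogous decomposition for any $\nb\in[\M]^k$. After a further subsequence extraction I would ensure that each of the $2k$ increments is, up to an arbitrarily small error, a vector drawn from the asymptotic structure of $X$ at the corresponding base point: a weakly null vector of norm at most $2\Lip(f)$. The asymptotic-$\co$ hypothesis then bounds the norm of any admissible sum of $k$ such vectors by a universal constant, independently of $k$, and combining the two telescopings yields $\|f(\mb)-f(\nb)\|\leq C\Lip(f)$.

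For the reverse direction I would argue by contrapositive. If $X$ is not reflexive, one can find a bounded sequence in $X$ with no weakly convergent subsequence; using a summing-type construction such as $f(\mb)=\sum_{i=1}^k x_{m_i}$ with a well-chosen bounded sequence $(x_n)$, one obtains a $1$-Lipschitz map whose image has diameter growing with $k$ on every infinite $\M$, contradicting concentration. If $X$ is reflexive but fails the asymptotic-$\co$ property, then for arbitrarily large $C$ there exist $k$ and an admissible family of weakly null sequences producing vectors $x_1,\ldots,x_k$ with $\|\sum x_i\|\geq C\max\|x_i\|$; encoding such a tree structure as a map on $[\N]^k$ that sends a $k$-subset to the sum along the corresponding root-to-leaf path again violates the concentration inequality.

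The main obstacle I expect is in the forward direction, in arranging that the telescoping increments are genuinely admissible for the asymptotic-$\co$ structure of $X$ rather than only approximately so. Each Ramsey stabilization can only control finitely many functionals up to a small tolerance, and these tolerances must decay geometrically so that the cumulative error across $2k$ increments does not overwhelm the asymptotic-$\co$ bound. A secondary subtlety is the alignment of the iterated weak limits with the admissibility condition defining the asymptotic structure of $X$, which typically requires choosing the blocks sufficiently deep in the asymptotic tree of $X$; this is where the fact that the increments are weakly null, which is obtained from reflexivity, is essential.
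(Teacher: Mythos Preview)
This theorem is not proved in the present paper; it is quoted from \cite{BLMS_JIMJ20} as background, so there is no proof here against which to compare your proposal directly. I can still comment on your sketch relative to the approach of \cite{BLMS_JIMJ20} as it is reflected in the tools this paper imports.

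Your forward direction is the correct outline and matches the Kalton--Randrianarivony machinery used in \cite{BLMS_JIMJ20}: iterated weak limits (available by reflexivity), telescoping into $k$ weakly null increments, and the asymptotic-$\co$ bound on the resulting approximate block sequence.

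Your reverse direction has a real gap in the reflexive, non-asymptotic-$\co$ case. You propose to take a weakly null tree whose branches realize some bad $(e_i)_{i=1}^k\in\{X\}_k$ and set $f(\mb)$ equal to the sum along the branch indexed by $\mb$. The difficulty is the \emph{lower} bound: $f(\mb)-f(\nb)$ is a signed combination of up to $2k$ tree vectors, and knowing that $\|\sum_{i=1}^k e_i\|$ is large for one asymptotic space gives no control on such alternating sums on every infinite $\M$. In \cite{BLMS_JIMJ20} this step is not handled through the asymptotic structure at all; the key input is the Freeman--Odell--Sari--Zheng theorem (Theorem~\ref{T:3.5} here), which produces a normalized weakly null \emph{array} with an asymptotic model $(e_i)$ not equivalent to the $\co$ basis. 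The $1$-suppression unconditionality of that asymptotic model (refined via joint spreading models, cf.\ Lemma~\ref{L:3.10} and Theorem~\ref{T:3.11}) is precisely what yields the lower bound on $\|f(\mb)-f(\nb)\|$. Your sketch neither invokes FOSZ nor supplies any substitute source of unconditionality, so as written the argument does not close. The non-reflexive case is also too vague: a bounded sequence with no weakly convergent subsequence is not by itself enough; one needs a quantitative separation (an $\ell_1$-type or summing-basis-type lower bound) that persists on every infinite $\M$.
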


In Theorem \ref{thm}, $d^{(k)}_\ham$ is the Hamming metric on the set $[\N]^k$ of $k$-subsets of $\N$, and we will simply denote $\big([\N]^k,d^{(k)}_\ham\big)$ by $\ham_k^\omega$. The concentration inequality \eqref{eq:concentration} prevents the equi-coarse embeddability of the sequence of Hamming graphs $(\ham_k^\omega)_k$ into any reflexive and asymptotic-$c_0$ Banach space.  The converse does not hold since it was shown in \cite{BLMS_JIMJ20} that there are quasi-reflexive (and not reflexive) asymptotic-$\co$ Banach spaces that do not equi-coarsely contain $(\ham_k^\omega)_k$. Therefore the coarse (or Lipschitz) geometry of the Hamming graphs cannot be used directly to compute the descriptive set theoretic complexity of the class of separable, reflexive and asymptotic-$\co$ Banach spaces. It follows from \cite[Theorem3]{DodosFerenczi2008} that if this class were analytic, then there would exist a separable reflexive space containing isomorphic copies of all members of this class. However, in \cite[Remark on page 120]{OdellSchlumprechtZsak2008} it is observed that if a  separable space contains isomorphic copies of all reflexive and asymptotic-$c_0$ spaces then it must contain an isomorphic copy of $c_0$, barring it from being reflexive. In conclusion, the class of separable, reflexive and asymptotic-$\co$ Banach spaces is non-analytic and in particular non-Borel.

In this article, we continue our investigation of the metric geometry of the Hamming graphs and we introduce a useful class of metrics on $[\N]^k$ which generalizes the Hamming metric. These \emph{Hamming-type} metrics are generated by certain basic sequences of Banach spaces and, relying on geometric arguments, they  can be used to prove that the class of separable, reflexive and asymptotic-$\co$ Banach spaces is co-analytic.

\begin{defin}\label{D:5.1}
Let $\eb = (e_j)_{j\in\N}$ be a normalized $1$-suppression unconditional basis of a Banach space  $E$. For every $k\in\N$ we define $d^{(k)}_{\eb}:[\N]^k\times[\N]^k\to\R$ as follows: If $\mb=\{m_1,m_2,\ldots,m_k\}$, $\nb=\{n_1,n_2,\ldots,n_k\}$ are in $[\N]^k$
(both sets written in increasing order) and $F = \{j: m_j\neq n_j\}$ then $d^{(k)}_{\eb}(A,B) = \big\|\sum_{j\in F}e_j\big\|_E$.
\end{defin}

We will justify that $d^{(k)}_{\eb}$ is indeed a metric in Section \ref{sec:3.2}. The metric $d^{(k)}_{\eb}$ is dominated by the Hamming metric $d^{(k)}_\ham$ and coincides with it if $(e_j)_{j\in\N}$ is the canonical basis of $\ell_1$. Also, if $\bar e = (e_j)_j$ is not equivalent to the unit vector basis of $c_0$ then the sequence of metric spaces $\big([\N]^k,d^{(k)}_{\eb}\big)_k$ is hereditarily unbounded, in the following sense:
\[\lim_{k\to\infty}\inf_{\M\in[\N]^\omega}\diam\big([\M]^k,d_{\eb}^{(k)}\big) = \lim_k\big\|\sum_{i=1}^ke_i\big\| = \infty.\]
Recall that for two metric space $X$ and $Y$ are two metric spaces, the \emph{$Y$-distortion} of $X$, denoted $c_Y(X)$, is defined as the infimum of those $D\in[1,\infty)$ such that there exist $s\in(0,\infty)$ and a map $f\colon X\to Y$ so that for all $x,y\in X$
  \begin{equation}\label{E:distortion}
    s\cdot d_{X}(x,y)\leq d_{Y}\big(f(x),f(y)\big)\leq
    s\cdot D\cdot d_{X}(x,y).
  \end{equation}
When \eqref{E:distortion} holds we say that $X$ bi-Lipschitzly embeds into $Y$ with distortion at most $D$. Within the class of separable reflexive Banach spaces, we prove the following metric characterization of the class of asymptotic-$\co$ Banach spaces.

\begin{thmAlfa}\label{T:A} Let $X$ be a separable reflexive Banach space.

\noindent $X$ is asymptotic-$c_0$ if and only if for all $1$-suppression unconditional
sequence $\bar e = (e_j)_j$ such that
$\lim_k\inf_{\M\in[\N]^\omega}\diam\big([\M]^k,d_{\eb}^{(k)}\big)=\infty$ one has $\sup_{k\in\N}c_X\big([\N]^k,d^{(k)}_{\eb}\big)=\infty$.
\end{thmAlfa}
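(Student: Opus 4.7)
The plan is to prove the two directions separately. The forward direction reduces essentially to Theorem \ref{thm}; the reverse direction requires extracting a non-$\co$ asymptotic basis of $X$ and building uniformly bi-Lipschitz embeddings from Hamming-type metrics using vectors that realize this asymptotic basis.

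For the forward direction, assume $X$ is asymptotic-$\co$ and let $\eb = (e_j)_j$ be $1$-suppression unconditional with $\lim_k \inf_\M \diam([\M]^k, d^{(k)}_{\eb}) = \infty$. The key pointwise estimate is $d^{(k)}_{\eb} \leq d^{(k)}_\ham$, which follows because $(e_j)$ is normalized and the triangle inequality gives $\|\sum_{j\in F} e_j\|_E \leq |F|$. Suppose toward contradiction that $\sup_k c_X([\N]^k, d^{(k)}_{\eb}) = D < \infty$, and for each $k$ choose $f_k$ rescaled so that $d^{(k)}_{\eb}(\mb,\nb) \leq \|f_k(\mb) - f_k(\nb)\| \leq D\,d^{(k)}_{\eb}(\mb,\nb)$. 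Then $f_k$ is $D$-Lipschitz as a map from $\ham_k^\omega$, and Theorem \ref{thm} produces an infinite $\M$ with $\sup_{\mb,\nb\in[\M]^k} \|f_k(\mb)-f_k(\nb)\| \leq CD$. Combined with the lower bound $\sup_{\mb,\nb\in[\M]^k}\|f_k(\mb)-f_k(\nb)\| \geq \diam([\M]^k, d^{(k)}_{\eb}) \geq \inf_{\M'\in[\N]^\omega}\diam([\M']^k, d^{(k)}_{\eb})$, this yields $\inf_{\M'}\diam([\M']^k, d^{(k)}_{\eb}) \leq CD$ for every $k$, contradicting the hereditary unbounded diameter hypothesis.

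For the reverse direction, assume $X$ is separable, reflexive, and not asymptotic-$\co$. Using asymptotic model theory (Brunel-Sucheston, Halbeisen-Odell) combined with the failure of asymptotic-$\co$, I would extract from $X$ a normalized weakly null basic sequence $(x_n)$ whose spreading model (or, more generally, an asymptotic model generated by a suitable array) $\eb = (e_j)$ is $1$-subsymmetric and not equivalent to the unit vector basis of $\co$. The standard characterization of $\co$-equivalence among $1$-subsymmetric bases then forces $\|\sum_{j=1}^k e_j\|_E \to \infty$, and $1$-spreading gives $\inf_\M\diam([\M]^k, d^{(k)}_{\eb}) = \|\sum_{j=1}^k e_j\|_E \to \infty$, so $\eb$ has the hereditary unbounded diameter property. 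For each $k$, pick $N_k$ large enough that the spreading model approximation holds with multiplicative error $1 \pm 1/k$ for every $\pm 1$ signed sum of at most $2k$ basis vectors whose smallest index exceeds $N_k$, and define $f_k(\mb) = \sum_{j=1}^k x_{(m_j-1)k + j + N_k}$. The indices $(m_j-1)k + j + N_k$ are strictly increasing in $j$ and distinct across distinct pairs $(m,j)$; for $\mb \neq \nb$ with $F = \{j : m_j \neq n_j\}$, the difference $f_k(\mb) - f_k(\nb)$ is a $\pm 1$ signed sum of $2|F|$ distinct basis vectors at indices above $N_k$, with exactly $|F|$ positive and $|F|$ negative signs. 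Hence $\|f_k(\mb)-f_k(\nb)\|_X \approx \|\sum_{\ell=1}^{2|F|}\epsilon_\ell e_\ell\|_E$; projecting onto the positive-sign coordinates (using $1$-suppression unconditionality) and then spreading gives $\|\sum\epsilon_\ell e_\ell\|_E \geq \|\sum_{\ell=1}^{|F|} e_\ell\|_E$, while the triangle inequality together with spreading gives the matching upper bound $\|\sum\epsilon_\ell e_\ell\|_E \leq 2\|\sum_{\ell=1}^{|F|} e_\ell\|_E$. Since $d^{(k)}_{\eb}(\mb,\nb) = \|\sum_{\ell=1}^{|F|} e_\ell\|_E$ by spreading, the distortion of $f_k$ is at most $2 + o(1)$ uniformly in $k$, so $\sup_k c_X([\N]^k, d^{(k)}_{\eb}) < \infty$.

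The main technical obstacle is the extraction step: if the non-$\co$ behavior appears only at the level of asymptotic models (rather than spreading models of a single sequence), the embedding must be built from an array $(x_i^{(j)})_{i,j}$, and one needs joint asymptotic approximation that controls both between-row combinations and within-row differences. A stabilization or joint spreading model argument may be required to ensure the approximation error is uniform across all $k$ and across all admissible $\pm 1$ signings of length $2k$. Verifying that such a uniform stabilization is always available when $X$ is separable, reflexive, and not asymptotic-$\co$ is the heart of the matter.
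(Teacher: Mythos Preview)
Your forward direction is correct and matches the paper's argument exactly: $d^{(k)}_{\eb}\leq d^{(k)}_\ham$ pointwise, so a hypothetical equi-bi-Lipschitz family is equi-Lipschitz for the Hamming metric, and the concentration inequality of Theorem~\ref{thm} forces bounded diameters on some $\M$, contradicting hereditary unboundedness.

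The reverse direction has a genuine gap at precisely the point you flag. You assume you can extract from a non-asymptotic-$c_0$ reflexive space a weakly null sequence whose \emph{spreading model} is not $c_0$-equivalent, and then your distortion estimates lean repeatedly on $1$-subsymmetry (e.g.\ writing $d^{(k)}_{\eb}(\mb,\nb)=\|\sum_{\ell=1}^{|F|}e_\ell\|$ and reducing the $\pm1$ sum of $2|F|$ terms to $\|\sum_{\ell=1}^{|F|}e_\ell\|$). But this extraction can fail: the paper itself exhibits reflexive non-asymptotic-$c_0$ spaces (e.g.\ $(\oplus_n\ell_p^n(T^*))_{T^*}$, Proposition~\ref{P:7.4}) in which \emph{every} spreading model of a weakly null sequence is uniformly $c_0$-equivalent. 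What \emph{is} guaranteed, by the Freeman--Odell--Sari--Zheng theorem (Theorem~\ref{T:3.5}), is a non-$c_0$ \emph{asymptotic model} generated by a weakly null array; such a model is $1$-suppression unconditional but in general not spreading, so your $|F|$-based bookkeeping collapses. The paper's fix is twofold. First, it works directly with the set $F=\{j:m_j\neq n_j\}$ rather than its cardinality: since $d^{(k)}_{\eb}(\mb,\nb)=\|\sum_{j\in F}e_j\|$ by definition, the array embedding $\phi(\mb)=\sum_{i=1}^k x^{(i)}_{km_i+i}$ gives $\|\phi(\mb)-\phi(\nb)\|\leq 2\|\sum_{i\in F}e_i\|+\delta$ via the triangle inequality and the asymptotic-model approximation, with no spreading needed. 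Second, for the lower bound one must project $\phi(\mb)-\phi(\nb)$ onto $\sum_{i\in F}x^{(i)}_{km_i+i}$, which requires $(1+\delta)$-suppression unconditionality of the concatenated family $(x^{(i)}_{km_i+i})_{i\in F}\cup(x^{(i)}_{kn_i+i})_{i\in F}$; the paper secures this via a joint-spreading-model stabilization (Lemma~\ref{L:3.10}), which is exactly the ``joint asymptotic approximation'' you suspected was needed but did not supply.
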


Theorem \ref{T:A} which is the main result of Section \ref{sec:3}, cannot be drawn from the statement of Theorem \ref{thm} alone. The key difference is that it provides the existence of an embedding for a ``Hamming-type'' metric instead of the non existence of a concentration phenomenon. As in \cite{BLMS_JIMJ20}, the proof of Theorem \ref{T:A} relies in large part on a theorem of Freeman, Odell, Sari, and Zheng \cite{FOSZ2017} which establishes a deep and unexpected relation between the asymptotic structure of a Banach space and its asymptotic models. However to obtain the finer geometric information in Theorem \ref{T:A}, another ingredient is required. A crucial unconditionality property for normalized weakly null arrays of finite height is proved using an asymptotic notion of a third kind
namely  joint spreading models introduced in \cite{AGLM2017}. The following complexity result follows from Theorem \ref{T:A} and an application of the Souslin operation from descriptive set theory.

\begin{corAlfa}\label{C:B}
The class of separable, reflexive, and asymptotic-$\co$ Banach spaces is non-Borel co-analytic.
\end{corAlfa}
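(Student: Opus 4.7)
The plan is to combine Theorem~\ref{T:A} with a standard quantifier-complexity calculation in the Effros--Borel standard Borel space $\cB$ of closed subspaces of $C[0,1]$. The discussion preceding Definition~\ref{D:5.1} already establishes, via \cite[Theorem~3]{DodosFerenczi2008} and \cite[Remark on page 120]{OdellSchlumprechtZsak2008}, that the class of separable reflexive asymptotic-$\co$ Banach spaces is not analytic. Since every Borel set is analytic, it will be enough to verify that the class is co-analytic; non-Borelness then follows automatically.

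First I would fix a Polish parametrization of normalized $1$-suppression unconditional basic sequences. A convenient choice is to identify $\bar e=(e_j)_j$ with the norm it induces on $c_{00}(\Q)$, viewed as an element of $\R^{c_{00}(\Q)}$. The defining axioms---absolute homogeneity, triangle inequality, $\|e_j\|=1$, and $\|\sum_{j\in F}a_je_j\|\leq\|\sum_j a_je_j\|$ for every finite $F\subset\N$ and every finitely supported $(a_j)$---are each Borel, so they carve out a standard Borel subset $\cE$. The growth condition $\lim_k\|\sum_{i=1}^k e_i\|=\infty$ that appears in Theorem~\ref{T:A} defines a further Borel subset $\cE_\infty\subset\cE$, and for each $k$ the metric $d_{\bar e}^{(k)}$ depends Borel-measurably on $\bar e$.

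Second, I would analyze the complexity of the distortion. For a countable metric space $(M,d)$ and $X\in\cB$, the inequality $c_X(M)\leq D$ unfolds to the existence of a scalar $s>0$ and a map $f\colon M\to C[0,1]$ with $f(m)\in X$ for every $m\in M$ satisfying \eqref{E:distortion}; because $M$ is countable and ``$f(m)\in X$'' is Borel in the Effros--Borel parametrization, the set $\{X\in\cB:c_X(M)\leq D\}$ is the projection of a Borel set, hence analytic, so its complement is co-analytic. Applied to $M=([\N]^k,d_{\bar e}^{(k)})$ with $(\bar e,k)$ varying over $\cE\ktimes\N$, this shows that the set
\[
\cR=\big\{(X,\bar e)\in\cB\ktimes\cE:\ \forall D\kin\N\ \exists k\kin\N,\ c_X\big([\N]^k,d^{(k)}_{\bar e}\big)>D\big\}
\]
is co-analytic. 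By Theorem~\ref{T:A}, the class of Corollary~\ref{C:B} coincides with
\[
\big\{X\in\cB:X\text{ is reflexive}\big\}\cap\big\{X\in\cB:\forall \bar e\kin\cE_\infty,\ (X,\bar e)\kin\cR\big\}.
\]
Reflexivity is co-analytic in $\cB$ by a classical result of Bossard, and universally quantifying a co-analytic body over the Borel parameter $\bar e\in\cE_\infty$ preserves the co-analytic class---this is the Souslin-operation step referenced in the statement. Intersecting two co-analytic sets yields a co-analytic set, completing the argument.

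The main obstacle I anticipate is not a conceptual one but the descriptive-set-theoretic bookkeeping: one must carefully verify that each quantifier retains the expected complexity. In particular, the Borel dependence of $d^{(k)}_{\bar e}$ on the coded parameter $\bar e$, and the fact that ``$f(m)\kin X$'' defines a Borel subset of $\cB\ktimes C[0,1]$ uniformly in $m$, are both routine but must be checked. Once these verifications are in place, the combination of the established non-analyticity and the proposed co-analyticity yields that the class is non-Borel co-analytic.
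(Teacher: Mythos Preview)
Your proposal is correct, and it takes a genuinely different route from the paper's own argument. The paper first discretizes: it picks a countable family $(\bar e^{(m)})_m$ that is $4/3$-dense (in the Banach--Mazur sense on each $k$-level) among all $1$-suppression unconditional sequences, then builds an explicit countably branching tree $\mathcal T$ whose branches encode sequences of pairs $(m_i,k_i)$ with growing diameters, and finally obtains the analytic set $\mathcal M=\bigcup_{\sigma\in[\mathcal T]}\bigcap_n\mathsf{LC}_{\sigma(n)}$ via the Souslin operation applied to the analytic sets $\mathsf{LC}_{(m,k)}=\{Y:c_Y(M_m^{(k)})\le 4\}$. The uniform distortion bound $3$ from the ``moreover'' clause of Theorem~\ref{T:3.2} is what makes the fixed constant $4$ in $\mathsf{LC}_{(m,k)}$ legitimate after the $4/3$-approximation.

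You bypass both the discretization and the tree by coding all of $\mathsf{HU}$ as a Borel subset $\cE_\infty$ of a Polish space and then using closure of $\Sigma^1_1$ under projection (equivalently, closure of $\Pi^1_1$ under universal quantification over a Polish parameter). This has two advantages: it is shorter, and it uses only Theorem~\ref{T:A} as stated, not the sharper distortion bound---the quantifier $\exists D$ is absorbed into $\cR^c$ rather than fixed in advance. One small terminological point: what you invoke is not literally the Souslin operation $\cA$ but the (equivalent) stability of analytic sets under projection along a Polish factor; the paper's proof is the one that actually writes down a Souslin scheme. Otherwise the bookkeeping you flag---Borel dependence of $d^{(k)}_{\bar e}$ on $\bar e$, and Borelness of $\{(X,y):y\in X\}$ in the Effros--Borel structure---is routine and your outline goes through.
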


The quantity $\sup_{k\in\N}c_X\big([\N]^k,d^{(k)}_{\ham}\big)=\infty$ cannot be a substitute for the metric invariant in Theorem \ref{T:A} since it follows from \cite{KaltonRandrianarivony2008} that $\sup_{k\in\N}c_{\ell_2}\big([\N]^k,d^{(k)}_{\ham}\big)=\infty$, and the Hilbert space $\ell_2$ is not asymptotic-$\co$. Identifying the class of Banach spaces which equi-bi-Lipschiztly, or equi-coarsely, contain the Hamming graphs is a central problem in nonlinear geometry of Banach spaces. The goal of Section \ref{sec:4} is to provide new insights on this problem. With the previous knowledge on the geometry of the Hamming graphs, there still existed a possibility that the metric invariant in Theorem \ref{T:A} could be substituted with the failure of equi-coarse embeddability of the Hamming graphs. We examine this possibility in Section \ref{sec:4}. We already know from \cite{BaudierLancienSchlumprecht2018} that a Banach space admitting an unconditional spreading model not equivalent to the unit vector basis of $c_0$ equi-coarsely contain the Hamming graphs. We must therefore draw our attention to non-asymptotic-$c_0$ Banach spaces all of their spreading models are isomorphic to $c_0$. A particularly interesting example, the space $T^*(T^*)$, is studied to a great extent in Section \ref{sec:4.2}. We introduce a new linear property, which we called {\em asymptotic-subsequential-$c_0$}, that is strong enough to rule out the existence of a sequence of equi-coarse embeddings of the Hamming graphs of certain canonical types.
\begin{defin}
Let $X$ be an infinite dimensional Banach space. We say that $X$ is an {\em asymptotic-subsequential-$\co$} space if there exists a constant $C\geq 1$ so that for all $n\in\mathbb{N}$ there exists an $N\in\mathbb{N}$ satisfying the following: whenever $E=(\R^N,\|\cdot\|_E)$
  is in the
 $N$-th asymptotic structure of $X$  (to be defined in Subsection \ref{subsec:2.4}) then there are $i_1<\cdots<i_n$ so that $(e_{i_k})_{k=1}^n$ is $C$-equivalent to the unit vector basis of $\ell_\infty^n$, where $(e_j)_{j=1}^N$ is the unit basis in $\R^N$.
\end{defin}

We then show that a $T^*$-sum of countable copies of $T^*$ spaces is an asymptotic-subsequential-$c_0$ space,
but not necessarily asymptotic $c_0$.

\begin{thmAlfa}\label{T:C}
The space $T^*(T^*)$ is asymptotic-subsequential-$c_0$ but not asymptotic-$c_0$.
\end{thmAlfa}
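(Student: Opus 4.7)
The plan is to prove the two parts separately, beginning with the easier negative direction. Let $(e_j)_{j\in\N}$ be the canonical basis of the outer $T^*$ and, for each $j$, let $(f^{(j)}_m)_{m\in\N}$ be the basis of the $j$-th copy of $T^*$ inside $T^*(T^*)$. Given $n\in\N$ and thresholds $(N_j)_{j=1}^n$ chosen by Player~I in the $n$-round asymptotic game, Player~II responds by playing $x_j=f^{(j)}_{M_j}$ with $M_j$ large enough so that the index of $f^{(j)}_{M_j}$ in the basis ordering of $T^*(T^*)$ exceeds $N_j$. Because the $x_j$ sit in pairwise distinct copies and are normalized, the $T^*(T^*)$-norm factors through the outer $T^*$-norm, yielding $\bigl\|\sum_{j=1}^n a_jx_j\bigr\|_{T^*(T^*)}=\bigl\|\sum_{j=1}^n a_je_j\bigr\|_{T^*}$. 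Since $T^*$ does not contain $c_0$, the summing sequence $\bigl\|\sum_{j=1}^n e_j\bigr\|_{T^*}$ is unbounded in $n$, so the element of $\{T^*(T^*)\}_n$ produced this way is not $C$-equivalent to the unit basis of $\ell_\infty^n$ for any uniform $C$; hence $T^*(T^*)$ is not asymptotic-$c_0$.

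For the subsequential positive direction, let $C_0$ be the asymptotic-$c_0$ constant of $T^*$ and fix $n$. Given any asymptotic realisation $(x_i)_{i=1}^N$ of an element of $\{T^*(T^*)\}_N$, I would decompose $x_i=\sum_jz_{i,j}$ into copy-components and record the unit profile $v_i=\sum_j\|z_{i,j}\|_{T^*}e_j\in T^*$ with copy-support $A_i=\{j:z_{i,j}\neq 0\}$. The strategy is a two-step Ramsey refinement exploiting $T^*$'s asymptotic-$c_0$ property both at the inner (per-copy) and the outer (across copies) scales. A pigeonhole on the value of $\|v_i\|_\infty$ separates a \emph{concentrated} regime, in which some $c_i$ carries at least half of the mass of $v_i$, from a \emph{diffuse} regime. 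In the concentrated regime, a further refinement of $(c_i)$ either collapses it to a constant copy $c_0$, in which case all $x_i$ lie asymptotically in that copy and $T^*$'s asymptotic-$c_0$ selects $n$ of them $C_0$-equivalent to $\ell_\infty^n$, or makes it strictly increasing, in which case one chooses $i_1<\cdots<i_n$ with $c_{i_1}\geq n$ so that the outer $T^*$-norm on $\sum_k a_ke_{c_{i_k}}$ is admissible and controlled by $C_0\max_k|a_k|$; in both sub-cases, applying the inner asymptotic-$c_0$ bound inside each relevant copy absorbs the residual tails from the non-dominant coordinates.

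The delicate case is the diffuse regime. Here I would apply Galvin's $\Delta$-system lemma to arrange that $(A_i)$ is a sunflower with root $R$ and pairwise disjoint petals $(B_i)$, and decompose $x_i=y_i+w_i$ according to this splitting. The petal contribution $\sum_k a_kw_{i_k}$ has disjoint copy-supports, so its $T^*(T^*)$-norm equals the outer $T^*$-norm of the corresponding block sequence of profiles, and after selecting $n$ indices with $\min B_{i_k}\geq n$ it is bounded by $C_0\max_k|a_k|$. The root contribution is computed copy by copy: for each $j\in R$ the vectors $(z_{i_k,j})_k$ are asymptotic blocks in the $j$-th copy of $T^*$, so the inner asymptotic-$c_0$ gives $\bigl\|\sum_ka_kz_{i_k,j}\bigr\|_{T^*}\leq C_0\max_k|a_k|\max_k\|z_{i_k,j}\|_{T^*}$, and reassembling through the outer $T^*$-norm reduces the task to controlling $\bigl\|\sum_{j\in R}\bigl(\max_k\|z_{i_k,j}\|_{T^*}\bigr)e_j\bigr\|_{T^*}$. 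This last quantity is the hard part of the proof: obtaining a bound uniform in $n$ requires the diffuseness hypothesis together with specific quantitative features of $T^*$'s Tsirelson-type norm, since one must exploit that a diffuse unit vector of $T^*$ necessarily has all coordinates substantially smaller than $1$ in a way compatible with the admissibility structure. Once this root estimate is obtained, combining with the petal bound yields a uniform constant $C$ depending only on $C_0$, establishing that $T^*(T^*)$ is asymptotic-subsequential-$c_0$.
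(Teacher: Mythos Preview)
Your negative direction is essentially the paper's argument (Lemma~\ref{lem:2.8}) and is fine modulo a routine perturbation so that Player~II's responses actually lie in the prescribed finite-codimensional subspaces.

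The positive direction has a genuine gap, which you yourself flag: the ``root estimate'' $\bigl\|\sum_{j\in R}(\max_k\|z_{i_k,j}\|)e_j\bigr\|_{T^*}$. This is not merely the hard step you have postponed; your sunflower decomposition gives no handle on it. The root $R$ has no a~priori size bound, and diffuseness only caps each individual coordinate by $1/2$ --- it says nothing about the $T^*$-norm of the coordinate-wise maximum of $n$ unit profiles on $R$, which can be of order $n$. The paper's proof avoids this by a different decomposition: it does \emph{not} pass through a sunflower, but simply fixes the ``root'' to be the first $k$ copies, writing $x_j=w_j+z_j$ with $w_j=P_{[1,k]\times\N}x_j$. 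Then (Lemma~\ref{take care of the hat}) a pigeonhole on the $k$-tuple $(\|P_1w_j\|,\ldots,\|P_kw_j\|)\in[0,1]^k$, partitioned into $k^k$ boxes of side $1/k$, finds among $N=k^{k+1}$ vectors $k$ of them whose profiles on $[1,k]$ agree to within $1/k$; the coordinate-wise max is then within $1$ of any single profile, and the root term is bounded by $2$. This pigeonhole is exactly the missing idea, and it is what forces the choice $N=k^{k+1}$.

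There is a second omission in your petal bound and in your ``increasing $c_i$'' sub-case: you apply the $c_0$-type upper estimate of $T^*$ to vectors with \emph{disjoint} (not successive) supports. The standard estimate \eqref{E:6.1a} requires a block sequence; for merely disjoint supports one needs the Casazza--Odell theorem that the Tsirelson norm and the \emph{modified} Tsirelson norm are equivalent (Theorem~\ref{T:6.4}, yielding Proposition~\ref{P:6.3}). This is the second key ingredient of the paper's proof (Lemma~\ref{L: 2}), and without it neither your petal contribution nor the tail $z_j$ is under control.
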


\section{Preliminaries}
\label{sec:2}

\subsection{Coarse and Lipschitz embeddings}
\label{sec:2.1}
We introduce some convenient terminology and notation that will allow us to treat all at once various embedding notions.

\begin{defin}
Let $X$ and $Y$ be metric spaces. Let $\rho,\omega\colon [0,\infty)\to[0,\infty)$. We say that $X$ $(\rho,\omega)$-embeds into $Y$ if there exists $f\colon X\to Y$ such that for all $x,y\in X$ we have
\begin{equation}\rho(d_X(x,y))\le d_Y(f(x),f(y))\le \omega(d_X(x,y)).
\end{equation}

If $\{X_i\}_{i\in I}$ is a collection of metric spaces. We say that $\{X_i\}_{i\in I}$ $(\rho,\omega)$-embeds into $Y$ if for every $i\in I$, $X_i$ $(\rho,\omega)$-embeds into $Y$.
\end{defin}

We will say that $\{X_i\}_{i\in I}$ equi-coarsely embeds into $Y$ if there exist non-decreasing functions $\rho,\omega\colon [0,\infty)\to[0,\infty)$ such that $\lim_{t\to\infty}\rho(t)=\infty$ and $\{X_i\}_{i\in I}$ $(\rho,\omega)$-embeds into $Y$. With an abuse of notation we say that $\{X_i\}_{i\in I}$ equi-bi-Lipschiztly embeds into $Y$ if there exist $s,D>0$ such that $\{X_i\}_{i\in I}$ $(\rho,\omega)$-embeds into $Y$, with $\rho(t)=st$ and $\omega(t)=sDt$. Note that equi-bi-Lipschitz embeddability is a stronger condition than merely assuming that $\sup_{i\in I}c_Y(X_i)<\infty$ since it does not allow for arbitrarily large or arbitrarily small scaling factors in \eqref{E:distortion}.
However if $Y$ is a Banach space, rescaling is possible and the two notions coincide.

\subsection{Projective hierarchy and the Souslin operation}
\label{sec:2.2}
Let us recall a few basics from descriptive set theory. We refer the reader to the book by Kechris \cite{Kechris1995}, where all the proofs and details can be found.
 A measurable space $(X,\cM)$ is said to be a \emph{standard Borel space} if there exists a Polish topology $\tau$ (\ie separable and completely metrizable) on $X$ such that the Borel $\sigma$-algebra generated by $\tau$ coincides with the $\sigma$-algebra $\cM$.  A typical standard Borel space is Baire space, \ie $\N^\omega$ equipped with the Borel $\sigma$-algebra generated by the product of the discrete topology. The set of all \emph{closed subsets} of the Banach space $C[0,1]$, which is a Polish space, is a standard Borel space when equipped with the Effros-Borel structure. Invoking a selection theorem of Kuratowski and Ryll-Nardzewski together with the classical result that every separable Banach space isometrically embeds into $C[0,1]$, the class
  \[\mathsf{SB}:=\{X\colon X\text{ is a separable Banach space}\}\]
  can be considered as a standard Borel space. With this identification in mind, classes of separable Banach spaces become subsets of $\mathsf{SB}$, and the topological complexity results in this paper will always refer to this standard Borel structure. We are interested in the projective complexity. The projective hierarchy is built using the operations of projection (or equivalently of continuous image) and complementation. The $0$-level of the projective hierarchy consists of the Borel sets. The next level comprises \emph{analytic sets }which are exactly the continuous images of Borel sets, and \emph{co-analytic sets} which are the complements of analytic sets. We will not need to discuss higher levels of the projective hierarchy which can be obtained by iterating the projection and complementation operations.
An immediate corollary of Souslin first separation theorem establishes a fundamental connection between the Borel hierarchy and the projective hierarchy. More precisely, Borel sets are exactly those sets that are analytic and co-analytic. The analytic and co-analytic classes can be seen to be stable under countable intersection or countable unions. A fact of crucial importance to us is that the class of analytic sets is also stable under the Souslin operation. Let $\N^{\omega}$ be the set of all sequences of natural numbers. If $x=(x_1,x_2,\dots)\in\N^\omega$ and $k\in \N$ we write $x_{\restriction k}:=(x_1,x_2,\dots,x_k)$ the restriction of $x$ to its first $k$ terms. The Souslin operation, denoted $\cA$ in honor of Alexandrov, applies to a collection of sets $\{A_s\colon s\in \N^{<\omega}\}$ where $\N^{<\omega}$ denotes all the \emph{finite} sequences of natural numbers, and is defined as
\begin{equation}
\cA(\{A_s\}_{s\in \N^{<\omega}}):=\bigcup_{t\in\N^\omega}\bigcap_{k\in\N}A_{t_{\restriction k}}.
\end{equation}

It is easy to observe that the Souslin operation subsumes countable union and countable intersection. Moreover, the Souslin operation is idempotent and every analytic set can be obtained via an application of the Souslin operation over a collection of closed sets. We refer to \cite[Chapter III, Section 25]{Kechris1995} for properties of Souslin schemes. The following proposition will be needed in Section \ref{sec:3.1}:

\begin{prop}
Let $\{A_s\colon s\in \N^{<\omega}\}$ be a collection of analytic sets, then $\cA(\{A_s\}_{s\in \N^{<\omega}})$ is analytic.
\end{prop}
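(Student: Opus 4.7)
The plan is to reduce to the standard characterization of analytic sets as projections (or continuous images) of Borel sets in a Polish space, and then show that the Souslin scheme produces such a projection. More precisely, I would first recall that a subset $A$ of a standard Borel space $X$ is analytic if and only if there exists a closed subset $F \subseteq X \times \N^\omega$ with $A = \pi_X(F)$, where $\pi_X$ denotes projection onto the first coordinate. Applying this to each $A_s$ in the given collection, we obtain closed sets $F_s \subseteq X \times \N^\omega$ with $A_s = \pi_X(F_s)$.

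Next, I would work in the Polish space $Z = X \times \N^\omega \times (\N^\omega)^\N$ and define
\begin{equation*}
B = \bigl\{ (x, t, (y_k)_{k\in\N}) \in Z : \forall k \in \N,\ (x, y_k) \in F_{t_{\restriction k}} \bigr\}.
\end{equation*}
The key observation is that $\cA(\{A_s\}_{s \in \N^{<\omega}}) = \pi_X(B)$: indeed, $x$ belongs to the Souslin scheme exactly when there is a branch $t \in \N^\omega$ such that, for every $k$, one can witness $x \in A_{t_{\restriction k}}$ by choosing some $y_k \in \N^\omega$ with $(x, y_k) \in F_{t_{\restriction k}}$. Once $B$ is shown to be Borel in $Z$, the projection $\pi_X(B)$ is analytic by the defining characterization recalled above, and this is exactly the conclusion.

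The main step, and the one requiring the most care, is verifying that $B$ is Borel. For each fixed $k \in \N$, decompose
\begin{equation*}
B_k = \bigl\{ (x,t,(y_j)_j) : (x,y_k) \in F_{t_{\restriction k}} \bigr\} = \bigcup_{s \in \N^k} \bigl( \{t : t_{\restriction k} = s\} \cap \{(x,y_k) : (x,y_k) \in F_s\} \bigr),
\end{equation*}
where the intersection is interpreted inside $Z$ via the appropriate coordinate projections. The sets $\{t : t_{\restriction k} = s\}$ are clopen in $\N^\omega$ (and hence closed in $Z$ after lifting), while the $F_s$ are closed in $X \times \N^\omega$, so each term in the union is closed in $Z$. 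Since $\N^k$ is countable, $B_k$ is an $F_\sigma$-set, hence Borel, and $B = \bigcap_{k \in \N} B_k$ is Borel as a countable intersection of Borel sets.

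The only subtle point I expect is bookkeeping: matching the topology on the countable power $(\N^\omega)^\N$, ensuring that the coordinate projections and cylinder sets used to express each $B_k$ really do yield a Borel description, and being careful that the Souslin scheme is reconstructed correctly under the projection $\pi_X$. No deep ingredients beyond the projection characterization of analyticity and the fact that countable products of Polish spaces are Polish are needed; the argument is essentially a book-keeping combination of these facts.
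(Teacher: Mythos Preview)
Your argument is correct: representing each $A_s$ as the projection of a closed set $F_s\subseteq X\times\N^\omega$, encoding the branch $t$ and the witnesses $(y_k)_k$ as extra coordinates, and checking that the resulting set $B$ is Borel in $X\times\N^\omega\times(\N^\omega)^\N$ is exactly the standard textbook proof. The only point worth making explicit is that $\N^\omega\times(\N^\omega)^\N$ is itself Polish (indeed homeomorphic to $\N^\omega$), so projecting a Borel subset of $Z$ onto $X$ does yield an analytic set; you allude to this but never state it outright.

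As for comparison with the paper: there is nothing to compare. The paper does not prove this proposition; it is stated as a known fact from descriptive set theory, with a reference to \cite[Chapter III, Section 25]{Kechris1995} for properties of Souslin schemes. Your write-up is essentially the argument one finds there.
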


\subsection{Asymptotic models and spreading models of unconditional sums}
\label{sec:2.3}

In this section we recall the definitions of spreading and asymptotic models and prove two results about the spreading models of complemented sums. These results will be used in Section \ref{sec:4}.
For two basic sequences $(x_i)$ and $(y_i)$ in some Banach spaces $X$ and $Y,$ respectively, and $C\ge 1$, we say that   $(x_i)$ and $(y_i)$ are {\em $C$-equivalent}, and we write $(x_i)\sim_C(y_i)$, if there are positive  numbers $A$ and $B$,
with $C=A\cdot B$,
so that for all $(a_j)\in c_{00}$, the vector space of all sequences  $x=(\xi_j)$ in $\R$ for which the support $\supp(x)=\{j: \xi_j\not=0\}$ is finite,  we have
$$\frac1{A}\Big\|\sum_{i=1}^\infty a_i x_i\Big\|_X\le \Big\|\sum_{i=1}^\infty a_i y_i\Big\|_Y\le B\Big\|\sum_{i=1}^\infty a_i x_i\Big\|_X.$$
In that case we say that  $\frac1A$ is the {\em lower estimate} and $B$ the {\em upper estimate of $(y_i)$ with respect to $(x_i)$}.
Note that $(x_i)$ and $(y_i)$ are  $C$-equivalent if and only $C\ge \|T\|\cdot\|T^{-1}\|$, where the linear operator  $T:   \spa(x_i:i\in\N) \to  \spa(y_i:i\in\N)$, is defined by $T(x_i)=y_i$, $i\kin\N$.

If $(e_i)$ is a Schauder basis of a Banach space $X$, we recall that $(x_n)$ is a {\em block sequence} in $X$ with respect to the basis $(e_i)$ if for all $n\in \N$:
\[x_n\neq 0\ \ \text{and}\ \ \max( \supp(x_n))<\min(\supp(x_{n+1})).\]

For a sequence of Banach spaces $(X_k)_{k\in\N}$, and a Banach space $U$, which has a $1$-unconditional basis $(u_j)$ we denote the {\em $U$-sum of the $X_k$'s}, by $\big(\oplus_{k=1}^\infty  X_k\big)_U$. This is the space of all sequences $\xb=(x_k)$, with $x_k\in X_k$, for $k\in\N$, such that the series $\sum_{k=1}^\infty  \|x_k\| u_k$ converges in $U$, and equipped with the norm
$$\|\xb\|= \Big\|\sum_{k=1}^\infty  \|x_k\| u_k\Big\|_U.$$
If all the $X_k$'s are the same space $X$ we also write $U(X)$ instead of $\big(\oplus_{k=1}^\infty X\big)_U$.

Our first proposition is about spreading models of unconditional sums of Banach spaces. Spreading models were introduced by Brunel and Sucheston in \cite{BrunelSucheston1974}. We recall the definition.
Let $E$ be a Banach space with a normalized basis $(e_i)$ and let
$(x_i)$ be a basic sequence in a Banach space $X$. We say that $E$ with its basis $(e_i)$ is a {\em spreading model of $(x_i)$},  if  there is a   null-sequence $(\vp_n)\subset(0,1)$, so that for  all $n$, all $(a_i)_{i=1}^n \subset[-1,1]$ and $n\le k_1<k_2<\ldots<k_n$, it follows that
 \begin{equation*}
 \Bigg|\Big\|\sum_{i=1}^n a_i x_{k_i}\Big\|_X  - \Big\| \sum_{i=1}^n a_i e_i \Big\|_E\Bigg|<\vp_n
 \end{equation*}
or, in other words, if
$$\lim_{k_1\to\infty}\lim_{k_2\to\infty} \ldots \lim_{k_n\to\infty} \Big\|\sum_{j=1}^n a_j x_{k_j}\Big\|_X=\Big\|\sum_{j=1}^n a_j e_j\Big\|_E.$$

Using Ramsey's Theorem it can be shown that every normalized basic sequence has a subsequence which admits a spreading model.

\begin{prop}\label{P:3.12}
Let $1\leq p \leq\infty$, $A,B,C,D\geq 1$, and $(X_n)$ be a sequence of Banach spaces so that for all $n\in\mathbb{N}$ any spreading model generated by a normalized weakly null sequence in $X_n$ is equivalent to the unit vector basis of $\ell_p$  (or $c_0$ if $p=\infty$) with $\frac1C$-lower and $D$-upper estimates. Let also $U$ be a reflexive Banach space with a $1$-unconditional  basis $(u_n)$ satisfying the following  property:
\begin{enumerate}
\item[($*$)]
For every finitely supported $x_0\in S_U$, every normalized block sequence $(x_n)_n$ in $U$, and every $k\in\mathbb{N}$ there exist $n_1<\cdots <n_k$ so that the sequence $(x_0,x_{n_1},\ldots,x_{n_k})$ is equivalent to the unit vector basis of $\ell_p^{k+1}$ with $\frac1A$-lower and $B$-upper estimates.
\end{enumerate}

 Then every spreading model generated by a weakly null normalized sequence in $(\oplus_{n=1}^\infty X_n)_U$ is equivalent to the unit vector basis of $\ell_p$ with $\frac1{ABC}$-lower and $ABD$-upper estimates.
\end{prop}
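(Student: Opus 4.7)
My plan is to pass the sequence $(z_n)$ to a ``good'' subsequence, decompose each $z_n$ into a ``static part'' supported on a common finite set of $U$-coordinates and a ``moving'' block-type perturbation, then control the static part by the column-wise spreading-model hypothesis on the $X_k$'s and the moving part by property $(*)$ of $U$. I will work with $p<\infty$; the case $p=\infty$ is handled by replacing $(\sum|a_i|^p)^{1/p}$ by $\sup_i|a_i|$ throughout.

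\textbf{Extraction and decomposition.} Using a diagonal Brunel-Sucheston argument, I pass to a subsequence, still denoted $(z_n)$, such that $(z_n)$ generates a spreading model $(\tilde e_i)_E$ in $Z$ and, for every $k\in\N$, the scalar sequence $\|P_kz_n\|_{X_k}$ converges to some $\alpha_k\geq 0$; whenever $\alpha_k>0$ I further arrange that $(P_kz_n/\|P_kz_n\|_{X_k})_n$ (which is weakly null in $X_k$ because $(z_n)$ is weakly null in $Z$) generates a spreading model in $X_k$ equivalent to the unit vector basis of $\ell_p$ with constants $1/C$ and $D$. Setting $\bar z_n:=\sum_k\|P_kz_n\|_{X_k}u_k\in U$ one has $\|\bar z_n\|_U=\|z_n\|_Z=1$; the bounded completeness of $(u_k)$ (from reflexivity of $U$) guarantees that $x_0:=\sum_k\alpha_k u_k$ converges in $U$ with $\|x_0\|_U\leq 1$. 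I then fix a small $\e>0$, choose a finite $F_0\subset\N$ with $\|x_0-P_{F_0}x_0\|_U<\e$, and put $x_n:=P_{F_0}z_n$, $y_n:=P_{F_0^c}z_n$. Since the vector $\bar y_n-P_{F_0^c}x_0\in U$ is bounded and coordinate-wise null, a sliding-hump argument (valid because $(u_k)$ is shrinking) yields a further subsequence (still denoted $(z_n)$) and intervals $K_1<K_2<\cdots$ such that $v_i:=P_{[K_i,K_{i+1})}\bar y_{n_i}$ is a block sequence in $U$ with $\sum_i\|v_i-\bar y_{n_i}\|_U$ small; correspondingly, $\tilde y_i:=P_{[K_i,K_{i+1})}y_{n_i}\in Z$ is a disjointly $U$-supported block of $(y_{n_i})$ with $\|\tilde y_i\|_Z=\|v_i\|_U$. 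I extract once more so that $\|v_i\|_U\to\gamma\in[0,1]$ and set $\beta:=\|x_0\|_U\in[0,1]$.

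\textbf{Main estimate.} The disjoint $U$-supports of $x_{n_i}$ (in $F_0$) and $\tilde y_i$ (in $[K_i,K_{i+1})\cap F_0^c$), combined with the definition of $\|\cdot\|_Z$, give
\[
\Big\|\sum_ia_iz_{n_i}\Big\|_Z\approx\Big\|\sum_{k\in F_0}\Big\|\sum_ia_iP_kz_{n_i}\Big\|_{X_k}u_k+\sum_i|a_i|v_i\Big\|_U.
\]
On each column $k\in F_0$ the spreading-model hypothesis on $X_k$ controls $\|\sum_ia_iP_kz_{n_i}\|_{X_k}$ between $(\alpha_k/C)\|(a_i)\|_{\ell_p}$ and $D\alpha_k\|(a_i)\|_{\ell_p}$; substituting these pointwise bounds via the $1$-unconditionality of $(u_k)$ reduces the problem to estimating $\|\lambda x_0+\sum_i|a_i|v_i\|_U$ with $\lambda=D\|(a_i)\|_{\ell_p}$ for the upper bound and $\lambda=\|(a_i)\|_{\ell_p}/C$ for the lower bound. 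Applying property $(*)$ to the normalizations of $x_0$ and of $(v_i)$ shows that $(x_0,v_i)$ is $(1/A,B)$-equivalent to the $\ell_p^2$-basis weighted by $(\beta,\gamma)$, and using $C,D\geq 1$ this yields
\[
\tfrac{\|(a_i)\|_{\ell_p}}{AC}(\beta^p+\gamma^p)^{1/p}\leq\Big\|\sum_ia_iz_{n_i}\Big\|_Z\leq BD\|(a_i)\|_{\ell_p}(\beta^p+\gamma^p)^{1/p}.
\]
Since $x_0+v_i\approx\bar z_{n_i}$ in $U$-norm one has $\|x_0+v_i\|_U\approx 1$, and a second application of $(*)$ to the pair $(x_0,v_i)$ gives $1/B\leq(\beta^p+\gamma^p)^{1/p}\leq A$ (up to the previous approximations). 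Inserting these two-sided bounds and letting $n_1<\cdots<n_k$ tend to infinity produces the spreading-model estimates $\|(a_i)\|_{\ell_p}/(ABC)\leq\|\sum_ia_i\tilde e_i\|_E\leq ABD\|(a_i)\|_{\ell_p}$, as required.

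\textbf{Expected obstacle.} Because the $X_k$ are not assumed reflexive or to have a basis, a direct Bessaga-Pelczynski perturbation of $(z_n)$ in $Z$ is not available; the blocking has to occur one level up, in $U$, via the diagonal sequence $\bar z_n$. The delicate bookkeeping is to arrange that \emph{all} column spreading-model limits in $X_k$ for $k\in F_0$ are achieved simultaneously by a single subsequence (possible by diagonal extraction since $F_0$ is finite) and to handle the degenerate cases $\beta=0$ (no static part; bounds $1/A,B$ coming from $(*)$ applied to blocks alone) and $\gamma=0$ (no moving part; bounds $1/C,D$ coming from the finitely many column spreading models on $F_0$), both of which lie within the claimed $1/(ABC)$ and $ABD$.
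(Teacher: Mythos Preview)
Your proof is correct and follows essentially the same route as the paper's: form the ``profile'' sequence $\bar z_n=\sum_k\|P_kz_n\|u_k$ in $U$, split off a finitely supported weak-limit part $x_0$, block-approximate the remainder by $(v_i)$, control the columns over $F_0$ via the spreading-model hypothesis on each $X_k$, and then apply property~$(*)$ twice (once for the $(k{+}1)$-tuple $(x_0,v_{i_1},\ldots,v_{i_k})$ and once for the pair $(x_0,v_i)$ to pin down $(\beta^p+\gamma^p)^{1/p}$). One notational caveat: where you write that $(x_0,v_i)$ is ``$(1/A,B)$-equivalent to the $\ell_p^2$-basis weighted by $(\beta,\gamma)$'', the estimate you actually need and use for $\|\lambda x_0+\sum_i|a_i|v_i\|$ is the full $\ell_p^{k+1}$-equivalence of $(x_0/\beta,v_{i_1}/\|v_{i_1}\|,\ldots,v_{i_k}/\|v_{i_k}\|)$ coming from $(*)$; your displayed bounds are correct only under that reading, so it would be clearer to say $\ell_p^{k+1}$ there.
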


\begin{proof}[Proof of Proposition \ref{P:3.12}] We assume that $p<\infty$, for $p=\infty$ the proof is similar.
 Assume that $x^{(m)}=\sum_{j=1}^\infty x^{(m)}_j\kin \big(\oplus_{j=1}^\infty X_j\big)_U$,  for $m\kin\N$, with $x^{(m)}_j\in X_j$, for  $j\kin\N$, and $\|x^{(m)}\|=\Big\|\sum_{j=1}^\infty \|x^{(m)}_j\| u_j\Big\|=1,$ and assume that $(x^{(m)})_{m=1}^\infty$ converges weakly to $0$. It is enough to show that for fixed $k\in\N$, $(a_i)_{i=1}^k$ in $S_{\ell_p^k}$ and $\vp>0$, there is a subsequence $(\xt^{(m)})_m$ of $(x^{(m)})_m$ so that for all $m_1<m_2<\ldots <m_k$
\begin{equation}\label{E:3.12.1}
\frac{1-\vp}{ABC} \le \Big\|\sum_{i=1}^k a_i \xt^{(m_i)} \Big\|\le  ABD(1+\vp).
\end{equation}
Then a straightforward diagonalization argument will prove our claim. We define $z_m=\sum_{j=1}^\infty \|x^{(m)}_j \|u_j$, for $m\in\N$. Since $U$ is reflexive we can assume, after passing to a  subsequence, that $z_m$ is weakly converging to some  $z=\sum_{j=1}^\infty  b_j u_j$.  Since we need to show \eqref{E:3.12.1} for a fixed $k$ and a fixed $(a_i)_{i=1}^k\in S_{\ell_p^k}$, we can assume, after passing again to a subsequence  and to arbitrarily small perturbations, that $z=\sum_{j=1}^{l_0} b_j u_j$ for  some  $l_0\in\N$, and  that there are  intervals $I_m\subset \N$, with $l_0<\min(I_1)\le \max(I_1)<\min(I_2)\le\max(I_2)<\ldots$, so that  for all $m\in\N$ we can write $z_m$ as
\begin{equation} \label{E:3.12.3}  z_m=\sum_{j=1}^{l_0} b_j u_j+ \sum_{j\in I_m}\|x^{(m)}_j\| u_j, \text{ and } b_j=\|x^{(m)}_j\|,   \text{ for $j=1,2,\ldots, l_0$.}\end{equation}
and thus
 \begin{align}\label{E:3.12.2}&x^{(m)}=\sum_{j=1}^{l_0} x^{(m)}_j +\sum_{j\in I_m}x^{(m)}_j.
                 \end{align}
By the  assumption on $X_j$, $j\in\N$ and because the sequences $(x_j^{(m)})_j$ are weakly null, we also can assume, after passing to a subsequence  that for $1\le m_1<m_2<\ldots<m_k$ and every $j=1,2,\ldots, l_0$ we have
   \begin{align}\label{E:3.12.4}\frac{1-\vp}C \Bigg( \sum_{i=1}^k |a_i|^p \|x^{(m_i)}_j\|^p\Bigg)^{1/p}&
   \le \Big\|\sum_{i=1}^k a_i x^{(m_i)}_j\Big\|
    \le (1+\vp)D\Bigg( \sum_{i=1}^k |a_i|^p \|x^{(m_i)}_j\|^p\Bigg)^{1/p}.\end{align}
Finally, letting  $y_0=z\in  \spa(u_j:j=1,2,\ldots, l_0)$ and $y_m=\sum_{j\in I_m}\|x^{(m)}_j \|u_j$, for $m\in\N$, we can use the property ($*$) of $U$, and,  again after passing to a subsequence, assume that for all $m_1<m_2<\ldots<m_k$
  \begin{align}\label{E:3.12.5}  \frac{1-\vp}{A}\Bigg(\|y_0\|^p+\sum_{i=1}^k  |a_i|^p \|y_{m_i}\|^p\Bigg)^{1/p}
  &\le \Big\|y_0+\sum_{i=1}^k  a_i y_{m_i}\Big\|\\
  &\le (1+\vp)B\Bigg(\|y_0\|^p\kplus\sum_{i=1}^k  |a_i|^p \|y_{m_i}\|^p\Bigg)^{1/p}\!\!.\notag\end{align}
Using \eqref{E:3.12.3} and \eqref{E:3.12.4} we deduce from the $1$-unconditionality of $(u_j)$ that
 \begin{align}\label{E:3.12.6}
 \Big\|\sum_{j=1}^{l_0}  \Big\|\sum_{i=1}^k a_i x^{(m_i)}_j\Big\|u_j   \Big\|&\le (1\kplus\vp)D \Big\|\sum_{j=1}^{l_0}   \Big( \sum_{i=1}^k |a_i|^p b_j^p\Big)^{1/p}u_j   \Big\| \\
 &= (1\kplus\vp)D \Big( \sum_{i=1}^k |a_i|^p\Big)^{1/p}\Big\| \sum_{j=1}^{l_0} b_j u_j\Big\| \notag \\
 &= (1\kplus\vp)D \Big\|\sum_{j=1}^{l_0}  \|x^{(m_i)}_j\| u_j\Big\|.\notag
 \end{align}

We therefore deduce that
\begin{align*}
 \Big\|&\sum_{i=1}^k a_i x^{(m_i)} \Big\|\\
 &=\Bigg\|\sum_{j=1}^\infty  \Big\|\sum_{i=1}^k a_i x^{(m_i)}_j\Big\|u_j\Bigg\|\\
 &=\Bigg\|\sum_{j=1}^{l_0}  \Big\|\sum_{i=1}^k a_i x^{(m_i)}_j\Big\|u_j
 +\sum_{i=1}^k a_i\sum_{j\in I_{m_i}}   \|x^{(m_i)}_j \|u_j  \Bigg\|\\
  &\le (1+\vp) B\Bigg( \Big\|\sum_{j=1}^{l_0}  \Big\|\sum_{i=1}^k a_i x^{(m_i)}_j\Big\|u_j   \Big\|^p+
  \sum_{i=1}^k |a_i|^p \Big\|\sum_{j\in I_{m_i}}  \|x^{(m_i)}_j \|u_j  \Big\|^p  \Bigg)^{1/p} \text{ (By \eqref{E:3.12.5})}  \\
 &\le (1+\vp)B\Bigg( (1\kplus\vp)^pD^p \Big\|\sum_{j=1}^{l_0}  \|x^{(m_i)}_j\| u_j\Big\|^p
  +\sum_{i=1}^k|a_i|^p\Big\|\sum_{j\in I_{m_i}}  \|x^{(m_i)}_j \|u_j  \Big\|^p\Bigg)^{1/p}
\text{ (By \eqref{E:3.12.6})}\\
&\le (1\kplus\vp)^2BD\Bigg( \Big\|\sum_{i=1}^k |a_i|^p \sum_{j=1}^{l_0}  b_j u_j\Big\|^p
  +\sum_{i=1}^k|a_i|^p\Big\|\sum_{j\in I_{m_i}}  \|x^{(m_i)}_j\| u_j  \Big\|^p  \Bigg)^{1/p}\\
 &\qquad \Big(\text{Since $\|x^{m_i}_j \|=b_j$, for $j=1,2,\ldots l_0$, and  $i=1,2\ldots k$, and  $\sum_{i=1}^k |a_i|^p=1$}\Big)
 \\
 &= (1+\vp)^2 BD\Bigg(\sum_{i=1}^k |a_i|^p\Big(\Big\| \sum_{j=1}^{l_0}  b_j  u_j\Big\|^p
  +     \Big\|\sum_{j\in I_{m_i}}  \|x^{(m_i)}_j \|u_j  \Big\|^p    \Big) \Bigg)^{1/p} \\
&\le \frac{(1+\vp)^2}{1-\vp}BDA\Bigg(\sum_{i=1}^k |a_i|^p \Bigg\| \sum_{j=1}^{l_0}b_ju_j  + \sum_{j\in I_{m_i}} \|x^{(m_i)}_j\| u_j\Bigg\|^p\Bigg)^{1/p}     \text{ (By \eqref{E:3.12.5})} \\
&= \frac{(1+\vp)^2}{1-\vp}ABD\Bigg(\sum_{i=1}^k |a_i|^p \|x^{(m_i)}\|^p\Bigg)^{1/p}=\frac{(1+\vp)^2}{1-\vp}ABD.
\end{align*}
Similarly we show that
\begin{align*}
\Big\|&\sum_{i=1}^k a_i x^{(m_i)} \Big\|\ge \frac{(1-\vp)^2}{1+\vp}\frac1{ABC}.
\end{align*}
We deduce therefore \eqref{E:3.12.1} after readjusting $\vp$.
\end{proof}

The next proposition is about asymptotic models of unconditional sums of Banach spaces. Asymptotic models, which are a generalization of spreading models, were introduced by Halbeisen and Odell in \cite{HalbeisenOdell2004}, and is based on the behavior of infinite arrays (as opposed to a single array for spreading models). An {\em array of infinite height} in a Banach space $X$ is a family $\big(x^{(i)}_j:i, j\in \N\big)\subset X$. For an array $\big(x^{(i)}_j:i, j\in \N\big)$, we call the sequence  $(x^{(i)}_j)_{j\in\N}$  {\em the $i$-th row of the array}. We call an array weakly null  if all rows are weakly null. A {\em subarray} of an infinite array $\big(x^{(i)}_j:i\in \N, j\in \N\big)\subset X,$ is an array of the form $\big(x^{(i)}_{j_s}:i\in \N, s\in \N\big)$, where $(j_s)\subset \N$ is a subsequence.  Thus, for a subarray we are taking  the same subsequence in each row.

A basic sequence $(e_i)$  is called an {\em asymptotic model} of a Banach space $X$, if there exist
 an    infinite  array  $\big(x^{(i)}_j:i ,j\kin \N\big)\subset S_X$ and a   null-sequence $(\vp_n)\subset(0,1)$, so that for  all $n$, all $(a_i)_{i=1}^n \subset[-1,1]$ and $n\le k_1<k_2<\ldots  <k_n$, it follows that
 \begin{equation*}
 \Bigg|\Big\|\sum_{i=1}^n a_i x^{(i)}_{k_i}\Big\|  - \Big\| \sum_{i=1}^n a_i e_i \Big\|\Bigg|<\vp_n.
 \end{equation*}

One may think of spreading models as asymptotic models for arrays with identical rows, and thus part of the theory of asymptotic models is reminiscent of the spreading model theory of Brunel and Sucheston. For instance, in \cite{HalbeisenOdell2004} it was shown that an asymptotic model generated by a normalized weakly null array is $1$-suppression unconditional.

  \begin{prop}\label{P:3.5}\cite{HalbeisenOdell2004}*{Proposition 4.1 and Remark 4.7.5}
Assume that  $\big(x^{(i)}_j:i, j\in \N\big)\subset S_X$ is an infinite array, all of whose rows  are normalized and weakly null. Then there is  a subarray of $\big(x^{(i)}_j:i, j\in \N\big)$ which has a $1$-suppression unconditional  asymptotic model $(e_i)$.
\end{prop}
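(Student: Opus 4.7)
The plan follows the standard two-step scheme: first extract a subarray with an asymptotic model, then upgrade this asymptotic model to be $1$-suppression unconditional by exploiting the weak nullity of each row.

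For Step 1, I may assume $X$ is separable, replacing it if necessary by the closed linear span of the array. Fix a countable norm-dense set $\{\bar a^{(\ell)}\}$ of $\bigcup_n[-1,1]^n$. An iterated diagonal (or Ramsey-style) extraction produces a subarray, retaining the same subsequence in every row, along which the iterated limit
\[
\lim_{k_1\to\infty}\cdots\lim_{k_{n_\ell}\to\infty}\Big\|\sum_{i=1}^{n_\ell}a^{(\ell)}_i x^{(i)}_{k_i}\Big\|_X
\]
exists for every $\ell$. Extending by continuity in the coefficients defines a norm $\|\cdot\|_E$ on $c_{00}$ whose unit-vector basis $(e_i)$ is the desired asymptotic model; equicontinuity of the $n$-variate norms in the coefficients (they are $1$-Lipschitz) produces the null sequence $(\vp_n)$ in the quantitative definition.

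For Step 2, the aim is to show $\|\sum_{i\in F}a_ie_i\|_E \le \|\sum_{i=1}^n a_ie_i\|_E$ for every $F\subset\{1,\dots,n\}$ and every $(a_i)_{i=1}^n\in[-1,1]^n$. Given $\vp>0$, applying the asymptotic-model approximation once to $(a_i)$ and once to the coefficient vector $a'_i=a_i$ for $i\in F$ and $a'_i=0$ for $i\notin F$ reduces the task to producing indices $n\le k_1<\cdots<k_n$ such that
\[
\Big\|\sum_{i\in F}a_i x^{(i)}_{k_i}\Big\|_X \le \Big\|\sum_{i=1}^n a_i x^{(i)}_{k_i}\Big\|_X+\vp.
\]
Picking $y^*\in S_{X^*}$ by Hahn-Banach norming $\sum_{i\in F}a_i x^{(i)}_{k_i}$, this inequality follows as soon as $|y^*(x^{(i)}_{k_i})|<\vp/n$ for every $i\notin F$. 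The principal obstacle is that $y^*$ depends on the $k_i$ with $i\in F$, while the monotonicity constraint $k_1<\cdots<k_n$ forces some indices not in $F$ to be sandwiched between indices in $F$; one cannot simply fix $y^*$ first and then push the remaining indices to infinity.

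To dispose of this obstacle I would insert a preparatory sub-extraction. Since $X$ is separable, pick a countable $w^*$-dense set $\{y^*_\ell\}\subset B_{X^*}$; a diagonal argument yields a further subarray along which $|y^*_\ell(x^{(i)}_{k})|$ decays arbitrarily fast in $k$, simultaneously over $\ell$ and $i$. Given the $F$-indices $k_{f_1}<\cdots<k_{f_m}$ (chosen first, with spacings to be specified), the Hahn-Banach functional $y^*$ is $w^*$-approximable on the finite-dimensional span of the relevant $x^{(i)}_j$ by some $y^*_\ell$, and the built-in decay lets us pick $k_i$ for $i\notin F$ inside each required gap $(k_{f_j},k_{f_{j+1}})$ with $|y^*(x^{(i)}_{k_i})|<\vp/n$. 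To ensure these gaps are wide enough, I would select the $F$-indices themselves with rapidly growing spacing, running the choice from the largest index downward so that each new gap can be enlarged without disturbing decisions already made. Combining the resulting inequality with the asymptotic-model approximation and letting $\vp\to 0$ delivers the $1$-suppression inequality $\|\sum_{i\in F}a_ie_i\|_E\le\|\sum_{i=1}^n a_ie_i\|_E$, completing the proof.
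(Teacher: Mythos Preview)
The paper does not supply a proof of this proposition; it is quoted as a result from Halbeisen--Odell \cite{HalbeisenOdell2004}, so there is nothing in the paper to compare against. That said, your Step~1 is the standard extraction and is fine, but Step~2 contains a genuine gap.

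The circularity you identify is real, and your proposed resolution does not break it. After fixing all the $F$-indices $k_{f_1}<\cdots<k_{f_m}$ (in whatever order), the norming functional $y^*$ is determined, and you then approximate it by some $y^*_\ell$ from your countable $w^*$-dense family. For the decay estimate $|y^*_\ell(x^{(i)}_k)|<\delta(k)$ to control $|y^*(x^{(i)}_{k_i})|$ on the non-$F$ vectors, the approximation $y^*\approx y^*_\ell$ must hold \emph{on those vectors}; but you have not yet chosen them, so the relevant finite-dimensional span has dimension comparable to $k_{f_m}$, and the index $\ell$ required has no a priori bound in terms of the $F$-indices already placed. Hence there is no guarantee that the gaps you left contain indices $k\ge \ell$. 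Choosing the $F$-indices ``from the largest downward'' does not help: $\ell$ still depends on the smallest $F$-index $k_{f_1}$, while every non-$F$ index above $f_1$ is already trapped in a fixed gap. (Note too that $|y^*_\ell(x^{(i)}_k)|\to 0$ cannot be arranged uniformly in $\ell$: the $y^*_\ell$ are $w^*$-dense and $\|x^{(i)}_k\|=1$, so the decay only holds for $\ell$ below a threshold growing with $k$, which is exactly the constraint that bites.)

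The argument in \cite{HalbeisenOdell2004} avoids norming the full $F$-sum. One builds $k_1<\cdots<k_n$ from left to right, at step $j$ using the elementary fact that for weakly null $(y_m)$ and any \emph{compact} $K\subset X$ one has $\|x\|\le\|x+y_m\|+\varepsilon$ for all $x\in K$ and cofinitely many $m$, with $K=\{\sum_{i\in G}a_ix^{(i)}_{k_i}:G\subset\{1,\dots,j-1\},\ (a_i)\in[-1,1]^{j-1}\}$. This only gives the suppression inequality for \emph{one} tuple $(k_1,\dots,k_n)$ with the non-$F$ terms removable ``from the top''; a Ramsey stabilisation over $[\N]^n$ then propagates it to all tuples in an infinite set, which is what handles the interleaved indices. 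Your sketch is missing this Ramsey step, and that is precisely the device that decouples the functional from the placement of the sandwiched non-$F$ indices.
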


 \begin{prop}\label{P:3.13}
Let $1\leq p< \infty$, $C, D\geq 1$ and $(X_n)_n$ be a sequence of Banach spaces so that for any $n\in\mathbb{N}$ every asymptotic model generated by a  normalized weakly null  array in $X_n$ is equivalent to the unit vector basis of $\ell_p$ with $C$-lower and $D$-upper estimates. Then every asymptotic model generated by a weakly null normalized array in the space $(\oplus_{n=1}^\infty X_n)_p$ is equivalent to the unit vector basis of $\ell_p$ with $C$-lower and $D$-upper estimates.
\end{prop}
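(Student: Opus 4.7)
The plan is to show that any normalized weakly null array in $Y := (\oplus_n X_n)_p$ produces, after a suitable subarray extraction, an asymptotic model that is $\ell_p$-equivalent with the same constants $C,D$. The strategy is to decompose each array vector into a \emph{stationary} piece, for which the coordinatewise hypothesis on the $X_m$'s delivers $\ell_p$-estimates, and a \emph{wandering} piece whose mass escapes to infinity in the coordinate direction and which, after a gliding-hump reduction, lands in disjoint coordinate blocks of the $\ell_p$-sum where $\ell_p$-isometric equivalence holds for free.

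Let $(x^{(i)}_j)_{i,j}\subset S_Y$ be a normalized weakly null array and write $x^{(i)}_j = \sum_m x^{(i)}_{j,m}$ with $x^{(i)}_{j,m}\in X_m$. By a standard diagonal extraction, pass to a subarray along which $b_{i,m}:=\lim_j\|x^{(i)}_{j,m}\|$ exists for every $i,m$, and set $\beta_i^p:=1-\sum_m b_{i,m}^p\in[0,1]$, the ``wandering mass'' of row $i$. Fix $n\in\N$ and $\delta>0$, choose $M$ so large that $\sum_{m>M}b_{i,m}^p<\delta^p$ for every $i\le n$, and split $x^{(i)}_j = u^{(i)}_j+w^{(i)}_j$ with $u^{(i)}_j:=P_{[1,M]}x^{(i)}_j$ and $w^{(i)}_j:=P_{(M,\infty)}x^{(i)}_j$. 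Since these two pieces have disjoint coordinate supports, the $\ell_p$-sum identity gives
\[\Big\|\sum_i a_i x^{(i)}_{k_i}\Big\|^p = \Big\|\sum_i a_i u^{(i)}_{k_i}\Big\|^p+\Big\|\sum_i a_i w^{(i)}_{k_i}\Big\|^p,\]
reducing the problem to separately controlling the two summands.

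For the stationary summand, set $T_m:=\{i\le n:b_{i,m}>0\}$. The normalized sub-array $(x^{(i)}_{j,m}/\|x^{(i)}_{j,m}\|)_{i\in T_m,j}$ is weakly null in $X_m$ and, by hypothesis, has an $\ell_p$-equivalent asymptotic model with constants $C,D$; rows with $i\notin T_m$ contribute negligibly since $\|x^{(i)}_{k_i,m}\|\to 0$. A further diagonal extraction over the finitely many $m\le M$ yields $\|\sum_i a_i x^{(i)}_{k_i,m}\|^p\in[C^{-p},D^p]\cdot\sum_i|a_i|^p b_{i,m}^p$ up to small errors, and summing over $m\le M$ produces $\|\sum_i a_i u^{(i)}_{k_i}\|^p\approx[C^{-p},D^p]\sum_i|a_i|^p(1-\beta_i^p)$. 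For the wandering summand, I exploit the fact that $\lim_j\sum_{M<m\le L}\|x^{(i)}_{j,m}\|^p=\sum_{M<m\le L}b_{i,m}^p<\delta^p$ for every finite $L$ while the tail $\sum_{m>N}\|x^{(i)}_{j,m}\|^p$ of each individual vector can be made arbitrarily small; a recursive gliding-hump selection then produces $k_1<\cdots<k_n$ together with interlaced cutoffs $M<L_1<N_1<\cdots<L_n<N_n$ such that $\hat w^{(i)}_{k_i}:=P_{(L_i,N_i]}x^{(i)}_{k_i}$ satisfies $\|\hat w^{(i)}_{k_i}-w^{(i)}_{k_i}\|\le 2\delta$, has $\|\hat w^{(i)}_{k_i}\|^p$ close to $\beta_i^p$, and has pairwise disjoint supports $(L_i,N_i]$. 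The disjointness in the $\ell_p$-sum then gives $\|\sum_i a_i\hat w^{(i)}_{k_i}\|^p=\sum_i|a_i|^p\|\hat w^{(i)}_{k_i}\|^p\approx\sum_i|a_i|^p\beta_i^p$.

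Combining the two summands, $\|\sum_i a_i x^{(i)}_{k_i}\|^p$ lies, up to $O(\delta)$, between $\sum_i|a_i|^p[C^{-p}(1-\beta_i^p)+\beta_i^p]$ and $\sum_i|a_i|^p[D^p(1-\beta_i^p)+\beta_i^p]$; since $C,D\ge 1$ both bracketed convex-combination expressions lie in $[C^{-p},D^p]$, yielding $C^{-p}\|a\|_p^p\le\|\sum_i a_i x^{(i)}_{k_i}\|^p\le D^p\|a\|_p^p$ modulo $O(\delta)$. A final diagonalization over $n$ and $\delta\to 0$ produces an asymptotic model with the required $\ell_p$-estimates. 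The main technical obstacle is the recursive gliding-hump extraction in the wandering step, which is the array-level analogue of the block-support argument underlying the proof of Proposition \ref{P:3.12}; it is delicate but ultimately tractable because the $\ell_p$-sum realizes disjoint coordinate supports isometrically.
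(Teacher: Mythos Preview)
Your approach is correct and is essentially the same as the paper's: both split each array vector, using the natural $\ell_p$-projections, into a ``stationary'' part supported on finitely many coordinates (handled via the hypothesis on each $X_m$) and a ``wandering'' part whose mass escapes to infinity (handled by a gliding-hump reduction to disjointly supported vectors in the $\ell_p$-sum), and then recombine via $\|u+w\|^p=\|u\|^p+\|w\|^p$. Your explicit bookkeeping with the wandering mass $\beta_i^p=1-\sum_m b_{i,m}^p$ and the convex-combination observation $C^{-p}(1-\beta_i^p)+\beta_i^p\in[C^{-p},1]$, $D^p(1-\beta_i^p)+\beta_i^p\in[1,D^p]$ is a clean way to package the final step; the paper instead keeps the norms $\|P_{[1,n_0]}a_iw_{j_i}^{(i)}\|$ and $\|P_{(n_0,\infty)}a_iw_{j_i}^{(i)}\|$ unsimplified and combines them directly, but the content is identical.

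One point of exposition worth tightening: as written, your gliding-hump step produces a \emph{particular} $k_1<\cdots<k_n$ with interlaced cutoffs, whereas for an asymptotic model you ultimately need the estimate for \emph{all} $k_1<\cdots<k_n$ in some subarray. The paper handles this by first passing to a subarray on which, for every index $j$, one has both $\|P_{(n_j,\infty)}w_j^{(i)}\|^p<\delta$ and $\|P_{(n_0,n_{j'}]}w_j^{(i)}\|^p<\delta$ for all $j'<j$; then any choice of $j_1<\cdots<j_m$ automatically yields the disjoint-block approximation. Your argument can be repaired either in this way, or (more cheaply) by first extracting an asymptotic model and then noting that it suffices to witness the $\ell_p$-bounds on \emph{some} sequence of tuples---but you should make this explicit.
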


\begin{proof}  For $\M\subset \N$
we denote the canonical projection from  $\big(\oplus_{k=1}^\infty X_k\big)_{\ell_p}$ onto $\big(\oplus_{k\in \M}X_k\big)_{\ell_p}$
by $P_\M$ and we abbreviate $W=\big(\oplus_{k=1}^\infty X_k\big)_{\ell_p}$.

Let $(w_j^{(i)}: i,j\in\mathbb{N})$ be a normalized weakly null array in $W$. By passing to a subarray, i.e. by taking a common infinite  set $\L$ of $j$'s and relabeling the array $(w_j^{(i)}: i\in\mathbb{N},j\in \L)$, we may assume that it generates an asymptotic model $(e_i)_i$. Fix $m\in\mathbb{N}$ and scalars $a_1,\ldots,a_m$. Without loss of generality we may assume that $ (\sum_{i=1}^m|a_i|^p)^{1/p} = 1$. The goal is to show that
\[\frac{1}{C} \leq \lim_{j_1\to\infty} \lim_{j_2\to\infty }\cdots\lim_{j_m\to \infty}\Big\|\sum_{i=1}^ma_iw^{(i)}_{j_i}\Big\| \leq D.\]
In particular, we are only interested in the first $m$ sequences of the given array thus we may disregard the remaining ones. By passing to a further subarray we may assume that the scalars $\mu_n^{(i)} = \lim_j\|P_{\{n\}}w_j^{(i)}\|$, $1\leq i\leq m$, $n\in\mathbb{N}$ exist. Observe that for $i=1,\ldots,m$  we have by Fatou's Lemma:
\[\sum_{n=1}^\infty(\mu_n^{(i)})^p \leq \liminf_{j\to\infty}\|w_j^{(i)}\|^p = 1.\]
We fix $\delta>0$  to be small enough, so that for all $0\le a\le 1$ and all $0\le x\le m\delta^{1/p}$ we have that  $(a+x)^p \leq a^p + 2px$
and $|a-x|^p\ge a^p-p x$.

Choose $n_0\in\mathbb{N}$ appropriately large so that for $i=1,\ldots,m$ we
have
\begin{equation} \label{E:3.13.1} \sum_{n>n_0}(\mu_n^{(i)})^p <
\delta.
\end{equation}
We now pick an increasing sequence $(n_j)_j$ in $\N$ such that
 for each $j\in\mathbb{N}$ and $1\leq i\leq m$
\begin{equation} \label{E:3.13.2}
\big\|P_{(n_j,\infty)}w_j^{(i)}\big\|^p < \delta.
\end{equation}
By the definition of the scalars $\mu_n^{(i)}$, $n\in\mathbb{N}$, $1\leq i\leq
m$, and \eqref{E:3.13.1}, we can now pass to a new common subarray so that
the following condition  is satisfied
\begin{equation} \label{E:3.13.3}
\big\|P_{(n_0,n_{j'}]}w_j^{(i)}\big\|^p < \delta\text{ for any }j'<j
\text{ in $\N$  and any } 1\leq i\leq m.
\end{equation}
We put  $j_0 = 0$. We  calculate for any choice of $j_1<j_2<\cdots<j_m$
\begin{align}\label{E:3.13.4}
\big\|P_{(n_0,\infty)}\sum_{i=1}^ma_iw_{j_i}^{(i)}\big\|^p &=
\Big\|\sum_{i=1}^m \big(P_{(n_{j_{i-1}},n_{j_i}]} +
P_{(n_0,n_{j_{i-1}}]} +
P_{(n_{j_i},\infty]}\big)a_iw_{j_i}^{(i)} \Big\|\\ &\leq
\Big(\Big(\sum_{i=1}^m\big\|P_{(n_{j_{i-1}},n_{j_i}]}a_iw_{j_i}^{(i)}\big\|^p\Big)^{1/p}
+ m\delta^{1/p} + m\delta^{1/p}\Big)^p\notag\\ &\text{(by \eqref{E:3.13.2}
and \eqref{E:3.13.3}) }\notag\\ & \leq
\sum_{i=1}^m\big\|P_{(n_{j_{i-1}},n_{j_i}]}a_iw_{j_i}^{(i)}\big\|^p
+ 4mp\delta^{1/p}.
\notag \end{align}
A similar argument  (using the choice
of $\delta$) also yields that
\begin{equation*}
\big\|P_{(n_0,\infty)}\sum_{i=1}^ma_iw_{j_i}^{(i)}\big\|^p \geq
\sum_{i=1}^m\big\|P_{(n_{j_{i-1}},n_{j_i}]}a_iw_{j_i}^{(i)}\big\|^p
- 2mp\delta^{1/p}.
\end{equation*}
We slightly refine this calculation:
\begin{align}
\label{E:3.13.5}
\big\|P_{(n_0,+\infty)}&\sum_{i=1}^ma_iw_{j_i}^{(i)}\big\|^p\\
 &\geq \sum_{i=1}^m\Big| \big\|P_{(n_0,+\infty)}a_iw_{j_i}^{(i)}\big\| - \big\|\big(P_{(n_{j_i},\infty)}+P_{(n_0,n_{j_{i-1}}]}\big)a_iw_{j_i}^{(i)}\big\| \Big|^p - mp\delta^{1/p}\notag\\
 &\geq  \sum_{i=1}^m\big\|P_{(n_0,+\infty)}a_iw_{j_i}^{(i)}\big\|^p - 4mp\delta^{1/\delta}.\notag
\end{align}

We now wish to evaluate the norm of an initial segment. For
$n\keq1,\ldots,n_0$ define $F_n \keq \{1\leq i\leq m: \mu_n^{(i)}\neq 0\}$.
By our assumptions, we may assume that for $n\keq1,\ldots,n_0$ the array
$$(z_j^{n,(i)}: i\kin F_n, j\kin\mathbb{N}) =
\Bigg(\frac{P_{\{n\}}w^{(i)}_j}{\|P_{\{n\}}w^{(i)}_j\|}: i\in F_n,
j\kin\mathbb{N}\Bigg)$$ generates an asymptotic model that is equivalent to
the unit vector basis of $\ell_p$ with $C$-lower and $D$-upper estimates.
We now calculate an initial segment of the norm.
\begin{align*} \lim_{
j_1\to\infty}\ldots\lim_{j_m\to\infty}\Big\|P_{[1,n_0]}\Big(\sum_{i=1}^ma_iw_{j_i}^{(i)}\Big)\Big\|^p
&= \lim_{
j_1\to\infty}\ldots\lim_{j_m\to\infty}\sum_{n=1}^{n_0}\Big\|\sum_{i\in
F_n}a_i\|P_{\{n\}}w_{j_i}^{(i)}\|z_{j_i}^{n,(i)}\Big\|^p\\ &\leq
\sum_{n=1}^{n_0}D^p\sum_{i\in F_n}|a_i|^p(\mu_n^{(i)})^p\\ &=
D^p\sum_{n=1}^{n_0}\sum_{i=1}^m|a_i|^p(\mu_n^{(i)})^p\\ &=
D^p\sum_{i=1}^m|a_i|^p\lim_{
j_1\to\infty}\ldots\lim_{j_m\to\infty}\sum_{n=1}^{n_0}\|P_{\{n\}}w_{j_i}^{(i)}\|^p\\
&= \lim_{
j_1\to\infty}\ldots\lim_{j_m\to\infty}D^p\sum_{i=1}^m\Big\|P_{[1,n_0]}a_iw_{j_i}^{(i)}\Big\|^p.
\end{align*}
We deduce that for any $j_1<\cdots<j_m$ that are chosen
sufficiently large we have
\begin{equation} \label{E:3.13.6}
\Big\|P_{[1,n_0]}\Big(\sum_{i=1}^ma_iw_{j_i}^{(i)}\Big)\Big\|^p \leq
D^p\sum_{i=1}^m\Big\|P_{[1,n_0]}a_iw_{j_i}^{(i)}\Big\|^p + \delta^{1/p}.
\end{equation}
A similar argument yields that for  $j_1<\cdots<j_m$ that
are chosen sufficiently large we have
\begin{equation} \label{E:3.13.7}
\Big\|P_{[1,n_0]}\Big(\sum_{i=1}^ma_iw_{j_i}^{(i)}\Big)\Big\|^p \geq
\frac{1}{C^p}\sum_{i=1}^m\Big\|P_{[1,n_0]}a_iw_{j_i}^{(i)}\Big\|^p -
\delta^{1/p}.
\end{equation}
We can finally estimate the desired norm. For
$j_1<\cdots<j_m$ large enough and $\delta$ sufficiently small, by
\eqref{E:3.13.4} and \eqref{E:3.13.6},  we have
\begin{align*}
\Big\|\sum_{i=1}^ma_iw_{j_i}^{(i)}\Big\|^p& =
\Big\|P_{[1,n_0]}\Big(\sum_{i=1}^ma_iw_{j_i}^{(i)}\Big)\Big\|^p  +
\Big\|P_{(n_0,\infty)}\Big(\sum_{i=1}^ma_iw_{j_i}^{(i)}\Big)\Big\|^p\\
&\leq
\sum_{i=1}^m\Big\|P_{(n_{j_{i-1}},n_{j_i}]}a_iw_{j_i}^{(i)}\Big\|^p
+ 4mp\delta^{1/p}+ D^p\sum_{i=1}^m\Big\|P_{[1,n_0]}a_iw_{j_i}^{(i)}\Big\|^p
+ \delta^{1/p}\\ &\leq
D^p\sum_{i=1}^m\Big\|P_{(n_0,\infty)}a_iw_{j_i}^{(i)}\Big\|^p +
D^p\sum_{i=1}^m\Big\|P_{[1,n_0]}a_iw_{j_i}^{(i)}\Big\|^p +
(4mp+1)\delta^{1/p}\\ & = D^p\sum_{i=1}^m\Big\|a_iw_{j_i}^{(i)}\Big\|^p +
(4mp+1)\delta^{1/p} = D^p\sum_{i=1}^m|a_i|^p + (4mp+1)\delta^{1/p}.
\end{align*}
A very similar calculation using \eqref{E:3.13.5} and
\eqref{E:3.13.7} yields
\begin{equation*}
\Big\|\sum_{i=1}^ma_iw_{j_i}^{(i)}\Big\|^p
\geq\frac{1}{C^p}\sum_{i=1}^m|a_i|^p - (4mp+1)\delta^{1/p}. \end{equation*}
As $\delta$ can be chosen arbitrarily close to zero we deduce the desired
conclusion.
\end{proof}

\subsection{Asymptotic structure}\label{subsec:2.4}
In this last preliminary subsection we recall the notion of asymptotic structure and its connection to weakly null trees. For $k\in\N$ we denote by $\cE_k$ the set of all norms on $\R^k$, for which the unit vector basis $(e_i)_{i=1}^k$ is a normalized monotone basis. With an easily understood abuse of terminology this can also be referred to as the set of all pairs $(E,(e_j)_{j=1}^k)$, where $E$ is a $k$-dimensional Banach space and $(e_j)_{j=1}^k$ is a normalized monotone basis of $E$.

We define a metric $\delta_k$ on $\cE_k$  as follows :
 For two spaces $E=(\R^k,\|\cdot\|_E)$ and $F=(\R^k,\|\cdot\|_F)$  we let $\delta_k(E,F)=\log\big( \|I_{E,F}\|\cdot \|I_{E,F}^{-1}\|\big)$, where
$I_{E,F}:E\to F$,  is the formal identity. It is also well known and easy to show that $(\cE_k, \delta_k)$ is a compact metric space.

We let $[\N]^{<\omega}=\{ S\ksubset \N: |S| < \infty \}$ and $[\N]^{\omega}=\{ S\ksubset \N: |S|= \infty\}$.
For $k\in \N$  we put $[\N]^{\le k}=\{ S\ksubset \N: |S|\kleq k\}$, and $[\N]^n=\{ S\ksubset \N: |S| = n\}$,
 and we always list the elements of some $\mb\in [\N]^{\kleq k}$ in increasing order, \ie if we write $\mb=\{m_1,m_2,\ldots, m_k\}$, we tacitly assume that $m_1<m_2<\ldots<m_k $.
If $X$ is a Banach space we call a tree $(x_{\nb}:\nb\in[\N]^{\le k})$ in $X$ {\em normalized} if $x_{\nb}\in S_X$, for all $\nb\in[\N]^{\le k}$,  and weakly convergent, or weakly null if for all $0\le j\le k-1$ and $n_1<n_2<\dots<n_j$, we have that $(x_{(n_1,n_2,\dots,n_j,i)})_i$ is weakly converging or  weakly null, respectively.

The following definition is due to Maurey, Milman, and Tomczak-Jaegermann  \cite{MaureyMilmanTomczak1995}. Here $S_X$ denotes the unit sphere in $X$, while $B_X$ denotes the closed unit ball.
\begin{defin}{(The  $k$-th asymptotic structure of $X$ \cite{MaureyMilmanTomczak1995}.)}\label{D:3.1}

Let $X$ be a Banach space. We denote by $\cof(X)$ the set of all its closed finite codimensional subspaces of $X$. For $k\in\N$ we define the {\em $k$-th asymptotic structure of $X$} to be the set, denoted by $\{X\}_k$, of spaces $E=(\R^k,\|\cdot\|)\in \cE_k$ for which the following is true:
 \begin{align}\label{E:3.1.1}
 \forall \vp\kgr0\, \forall X_1\kin\cof(X)\,&\exists x_1\kin S_{X_1}\, \forall X_2\kin\cof(X)\,\exists x_2\kin S_{X_2}\,\ldots  \forall X_k\kin\cof(X)\,\exists x_k\kin S_{X_k}\,\\
 &(x_j)_{j=1}^k\sim_{1+\vp} (e_j)_{j=1}^k.\notag
 \end{align}
 For $1\le p\le \infty$ and $c\ge 1$, we say  that $X$ {\em is $c$-asymptotically $\ell_p$}, if for  all  $k\kin \N$ and all spaces $E\in\{X\}_k$, with monotone normalized basis  $(e_j)_{j=1}^k$, $(e_j)_{j=1}^k$ is $c$-equivalent to the $\ell_p^k$ unit vector basis.
 We say  that $X$ {\em is asymptotically $\ell_p$}, if it is $c$-asymptotically $\ell_p$ for some $c\ge 1$. In case that $p=\infty$ we say that the space $X$ is $c$-asymptotically $c_0$, or asymptotically $c_0$.
\end{defin}

We denote by $T^*$ the Banach space constructed by Tsirelson in
 \cite{Tsirelson1974}. This is the archetype of a reflexive asymptotic-$c_0$
 space (see Remark \ref{R:6.2}).
 Soon after, in \cite{FigielJohnson1974}, it became clear that the
 easier  to define space is $T$, the dual of $T^*$, because the norm of this space is more conveniently described. It has since become common to refer to $T$ as Tsirelson space instead of $T^*$. Figiel and Johnson in \cite{FigielJohnson1974} gave an implicit formula that describes the norm of $T$ as follows. We call a sequence $(E_j)_{j=1}^n$ of finite subsets of $\mathbb{N}$ {\em admissible } if $n\le E_1<E_2<\cdots<E_n$.  For $x=\sum_{j=1}^\infty \lambda_j e_j\in c_{00}$ and $E\in [\N]^{<\omega}$ we write
 $E(x)=\sum_{j\in E} \lambda_j e_j$. As it was observed in \cite{FigielJohnson1974}, if  $\|\cdot\|_T$ denotes the norm of $T$ then for every $x\in c_{00}$:
   \begin{equation}\label{E:6.1}
    \|x\|_T=\max\Big\{\|x\|_\infty, \frac12 \sup\sum_{j=1}^n \|E_j(x)\|_T\Big\},
   \end{equation}
where the supremum is taken over all $n\in\N$ and admissible sequences $(E_j)_{j=1}^n$. The space $T$ is the completion of $c_{00}$ with this norm and the unit vector basis is a 1-unconditional basis of $T$.

It is worth noting that a $T^*$-sum of infinitely many infinite dimensional Banach spaces cannot be asymptotic-$c_0$.

\begin{lem}
\label{lem:2.8}
The space $(\oplus_{k=1}^\infty X_k)_{T^*}$ cannot be asymptotic-$c_0$ if infinitely many of the $X_k$'s are infinite dimensional.
\end{lem}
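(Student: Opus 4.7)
Proceed by contradiction: suppose $Y:=\big(\oplus_{k=1}^\infty X_k\big)_{T^*}$ were $C$-asymptotic-$c_0$ for some $C\ge 1$, and let $(k_i)_{i=1}^\infty$ enumerate in increasing order those indices for which $X_{k_i}$ is infinite-dimensional. The plan is to exhibit, for every $n\in\N$, an $n$-dimensional space $\tilde E_n\in\{Y\}_n$ whose basis is isometrically equivalent to $(u_{k_j})_{j=1}^n$, where $(u_i)$ denotes the Tsirelson basis of $T^*$. The $C$-asymptotic-$c_0$ hypothesis will then force $\sup_n\big\|\sum_{j=1}^n u_{k_j}\big\|_{T^*}\le C$, contradicting the reflexivity of $T^*$ by a Bessaga-Pe\l czy\'nski argument.

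To place $\tilde E_n$ in $\{Y\}_n$, I would describe the following Player~$2$ strategy in the Maurey-Milman-Tomczak-Jaegermann game: whenever Player~$1$ plays $V_j\in\cof(Y)$, Player~$2$ selects a unit vector $x_j\in V_j\cap X_{k_j}$. Such a choice is available because $V_j\cap X_{k_j}$ is finite-codimensional in the infinite-dimensional $X_{k_j}$. Since $x_1,\ldots,x_n$ live in pairwise disjoint summands $X_{k_1},\ldots,X_{k_n}$ with $\|x_j\|=1$, the definition of the $T^*$-sum gives for every choice of scalars $(a_j)$
\[
\Big\|\sum_{j=1}^n a_j x_j\Big\|_Y
\;=\;\Big\|\sum_{j=1}^n |a_j|\,u_{k_j}\Big\|_{T^*}
\;=\;\Big\|\sum_{j=1}^n a_j u_{k_j}\Big\|_{T^*},
\]
where the last equality uses $1$-unconditionality of $(u_i)$. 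Hence $(x_j)_{j=1}^n$ is isometrically (and \emph{a fortiori} $(1+\vp)$-) equivalent to $(u_{k_j})_{j=1}^n$, so $\tilde E_n\in\{Y\}_n$.

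Applying the $C$-asymptotic-$c_0$ hypothesis, $\tilde E_n$ is $C$-equivalent to $\ell_\infty^n$; evaluating on the constant-one vector yields $\big\|\sum_{j=1}^n u_{k_j}\big\|_{T^*}\le C$ uniformly in $n$. The subsequence $(u_{k_j})_j$ is a $1$-unconditional basic sequence in $T^*$, so uniform boundedness of its partial sums forces the formal series $\sum_j u_{k_j}$ to be weakly unconditionally Cauchy. Since $T^*$ is reflexive, it contains no isomorphic copy of $c_0$; hence by the Bessaga-Pe\l czy\'nski theorem, $\sum_j u_{k_j}$ must converge in norm, which is impossible because $\|u_{k_j}\|_{T^*}=1$ for every $j$.

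The only delicate step is establishing $\tilde E_n\in\{Y\}_n$; the remainder is a direct application of classical results on WUC series. Fortunately, the infinite-dimensionality of the distinguished summands leaves Player~$2$ ample room against any finite-codimensional restriction from Player~$1$, so the construction is clean and requires no adaptive care.
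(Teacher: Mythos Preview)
Your proof is correct and in fact takes a cleaner route than the paper's. The paper first separates off the case where some $X_k$ contains $\ell_1$, then invokes Rosenthal's theorem to produce a normalized weakly null sequence in each infinite-dimensional $X_{k_i}$, assembles these into a weakly null tree of height $n$ whose branches are all isometrically equivalent to $(u_{k_i})_{i=1}^n$, and appeals to the (standard but unstated) fact that such a tree places $(u_{k_i})_{i=1}^n$ in $\{Y\}_n$. You bypass all of this by playing the asymptotic game directly: the observation that $V_j\cap X_{k_j}$ is finite-codimensional in the infinite-dimensional $X_{k_j}$ lets Player~2 pick exactly (not just approximately) a unit vector in the required summand, so no $\ell_1$-dichotomy, no weak-null extraction, and no perturbation is needed. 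Your endgame via Bessaga--Pe\l czy\'nski is a slightly more explicit version of the paper's one-line assertion that the subsequence $(u_{k_i})_i$ of the $T^*$-basis is not equivalent to the $c_0$-basis; either finishes the job.
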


\begin{proof}
Let $L = \{k_1<k_2<\cdots\}$ denote the collection of $k\in\mathbb{N}$ for which $X_k$ is infinite dimensional. If any one of these $X_k$'s contains an isomorphic copy of $\ell_1$ we are done. Otherwise, by Rosenthal's theorem, we can pick for each $k\in L$ a normalized weakly null sequence $(x_i^{(k)})_i$ in $X_k$. For each $n\in\mathbb{N}$ take the countably branching weakly null tree $\{x_{\mb}:\mb\in[\mathbb{N}]^{\leq n}\}$ where $x_{\{m_1,\ldots,m_i\}} = x^{(k_i)}_{m_i}$. Every maximal branch of this tree is isometrically equivalent to  elements of $(e_{k_i})_{i=1}^n$, where $(e_i)_{i=1}^\infty$ denotes the unit  vector basis of $T^*$. Then $(e_{k_i})_{i=1}^n\in\{ (\oplus_{k=1}^\infty X_k)_{T^*}\}_n$ for all $n\in\mathbb{N}$. But $(e_{k_i})_{i=1}^\infty$ is not equivalent to the $c_0$ unit vector basis.
\end{proof}

 The following lemma, which will be used repeatedly follows from \cite[Proposition 2.3]{OdellSchlumprecht2002}, says in particular that, for a separable reflexive space every $N$-dimensional asymptotic subspace can be realized (up to an arbitrarily small perturbation) on a branch of a normalized weakly null tree of height $N$.

\begin{lem}
\label{lem:2.7}
Let $X$ be a Banach space with a separable dual, $k\in\N$, $(e_i)_{i=1}^k\in\{X\}_k$, and let $\vp>0$. Then there exists a countably branching weakly null tree $\{x_{\nb}:\nb\in[\N]^{\leq k}\setminus \{\emptyset\}\}$ in $S_X$, all of whose branches are $(1+\vp)$-equivalent to $(e_i)_{i=1}^k$.
\end{lem}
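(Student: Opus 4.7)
The plan is to turn the vector game in Definition~\ref{D:3.1} into a countably branching tree by letting the index $n$ of a child select a member from a fixed nested family $(Y_n)$ of finite-codimensional subspaces, and to use separability of $X^*$ to ensure that the resulting children are weakly null.

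First I would fix $\vp>0$ and a dense sequence $(f_n)\subset S_{X^*}$, and set $Y_n=\bigcap_{i=1}^n\ker f_i$, so that $(Y_n)\subset\cof(X)$ is decreasing. The elementary observation I need is that any bounded sequence $(z_m)\subset X$ with $z_m\in Y_m$ for every $m$ is weakly null: given $f\in X^*$ and $\delta>0$, approximating $f$ in norm by a positive scalar multiple of some $f_i$ and using that $f_i(z_m)=0$ for all $m\ge i$ yields $|f(z_m)|\le\delta\sup_m\|z_m\|$ eventually, so $f(z_m)\to 0$.

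Next I would use the hypothesis $(e_j)_{j=1}^k\in\{X\}_k$ to fix a deterministic winning strategy $\sigma$ for Player~2 in the length-$k$ game of \eqref{E:3.1.1} with parameter $\vp$: whenever Player~1 plays $Y^1,\ldots,Y^k\in\cof(X)$ adaptively and Player~2 responds $x_j=\sigma(Y^1,x_1,\ldots,Y^j)\in S_{Y^j}$, the sequence $(x_j)_{j=1}^k$ is $(1+\vp)$-equivalent to $(e_j)_{j=1}^k$. Then, for $\nb=\{n_1<\cdots<n_j\}\in[\N]^{\le k}\setminus\{\emptyset\}$, I let Player~1 play $Y_{n_1},Y_{n_2},\ldots,Y_{n_j}$ in order and define $x_\nb$ to be Player~2's $j$-th response under $\sigma$. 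Because $\sigma$ is deterministic, the $\ell$-th response on any branch depends only on $n_1,\ldots,n_\ell$, so the assignment $\nb\mapsto x_\nb$ is unambiguous, $x_\nb\in S_{Y_{n_j}}\subset S_X$, and every maximal branch $(x_{(n_1)},x_{(n_1,n_2)},\ldots,x_{(n_1,\ldots,n_k)})$ is $(1+\vp)$-equivalent to $(e_j)_{j=1}^k$ by the defining property of $\sigma$.

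Finally, for any node $(n_1,\ldots,n_{j-1})$ the children $\big(x_{(n_1,\ldots,n_{j-1},m)}\big)_{m>n_{j-1}}$ all satisfy $x_{(n_1,\ldots,n_{j-1},m)}\in S_{Y_m}$, so the observation from the first step shows that this sequence is weakly null in $m$. This yields the required weakly null tree. The main conceptual obstacle is the passage from the ``$\forall Y\,\exists x$'' alternation of the asymptotic game to a single tree that is consistent across all depths and branches; separability of $X^*$ (through the universal decreasing chain $(Y_n)$) is precisely what allows a uniform adversarial choice of Player~1 subspaces to be reconciled with weak nullity of the children.
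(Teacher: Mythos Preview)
Your argument is correct and is essentially the standard way this implication is proved. The paper itself does not give a proof of Lemma~\ref{lem:2.7}; it simply records that the statement follows from \cite[Proposition 2.3]{OdellSchlumprecht2002}, which establishes the equivalence between the asymptotic-game formulation of $\{X\}_k$ and the weakly-null-tree formulation when $X^*$ is separable. Your proof supplies exactly the easy direction of that equivalence: the universal decreasing chain $Y_n=\bigcap_{i\le n}\ker f_i$ built from a dense sequence in $S_{X^*}$ lets you convert Player~2's winning strategy in the $\cof(X)$-game into a countably branching tree, and separability of $X^*$ is what guarantees that children along each node are automatically weakly null. The only cosmetic point is that the existence of a deterministic winning strategy $\sigma$ for the vector player is being invoked via choice over histories, which is unproblematic for a finite-length game; you might state this explicitly for completeness, but nothing is missing.
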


\section{Co-analyticity of $\mathsf{R} \cap \mathsf{As}_{c_0}$ and Hamming-type metrics}
\label{sec:3}
In Section \ref{sec:3.1} we expand on the general principles, mentioned in the Introduction, that are useful to estimate the projective complexity of classes of Banach spaces using certain bi-Lipschitz invariants. We show how such a strategy can be applied to show the co-analyticity of  the class of all separable and reflexive asymptotic-$\co$ Banach spaces using Theorem \ref{T:A}. In Section \ref{sec:3.2} we prove Theorem \ref{T:A}.

\subsection{Co-analyticity via bi-Lipschitz embeddings}
\label{sec:3.1}
The goal of this subsection is to  prove Corollary \ref{C:B}.  We will deduce it from the following Theorem
 which presents an, at least formal,  strengthening of Theorem \ref{T:A} and
 which will be proved in Subsection \ref{sec:3.2}.

\begin{thm}[Theorem \ref{T:A}]\label{T:3.2}\
\begin{enumerate}
\item Let $X$ be a separable reflexive Banach space. Then $X$ is  asymptotic-$c_0$ if and only if for all $1$-suppression unconditional
sequence $\bar e = (e_j)_j$ such that
$\lim_k\diam\big([\N]^k,d_{\eb}^{(k)}\big)=\infty$ one has $\sup_{k\in\N}c_X\big([\N]^k,d^{(k)}_{\eb}\big)=\infty$.

\item Moreover, if $X$ is a separable but not asymptotic-$c_0$ Banach space, then there is a  $1$-suppression unconditional
sequence $\bar e = (e_j)_j$, with
$\lim_k\diam\big([\N]^k,d_{\eb}^{(k)}\big)=\infty$
and for every $k\in\N$ a bi-Lipschitz embedding of $\big([\N]^k,d^{(k)}_{\eb}\big)$ of distortion at most $3$.
\end{enumerate}
\end{thm}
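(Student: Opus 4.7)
The plan is to prove the stronger part (2) first, since (2) together with the easy direction of (1) yields the full theorem, and then to verify the easy direction of (1) independently.

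\textbf{Easy direction of (1).} Suppose $X$ is separable, reflexive, and asymptotic-$c_0$, and let $\bar e$ be any 1-suppression unconditional basic sequence with $\lim_k\|e_1+\cdots+e_k\|_E=\infty$. Since $d_{\bar e}^{(k)}\leq d^{(k)}_\ham$ on $[\N]^k$, any bi-Lipschitz embedding $f\colon([\N]^k,d_{\bar e}^{(k)})\to X$ with scaling $s$ and distortion $D$ satisfies $\Lip(f)\leq sD$ when regarded as a map from $\ham_k^\omega$. Theorem~\ref{thm} then delivers an infinite $\M\subseteq\N$ with $\sup_{\mb,\nb\in[\M]^k}\|f(\mb)-f(\nb)\|\leq CsD$. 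Selecting disjoint $\mb,\nb\in[\M]^k$ forces $F=\{1,\dots,k\}$, so that $d_{\bar e}^{(k)}(\mb,\nb)=\|e_1+\cdots+e_k\|_E$, and the lower bound in \eqref{E:distortion} yields $D\geq C^{-1}\|e_1+\cdots+e_k\|_E\to\infty$.

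\textbf{Producing $\bar e$ from a non-asymptotic-$c_0$ space.} Assume now $X$ is separable but not asymptotic-$c_0$. The key input is the theorem of Freeman, Odell, Sari, and Zheng~\cite{FOSZ2017} relating the asymptotic structure of $X$ to its asymptotic models: it yields a normalized weakly null infinite array $(x_j^{(i)}\colon i,j\in\N)\subseteq S_X$ whose asymptotic model $\bar e=(e_i)_i$ is not equivalent to the unit vector basis of $c_0$. After passing to a subarray, Proposition~\ref{P:3.5} guarantees $\bar e$ is 1-suppression unconditional, and non-$c_0$-equivalence forces $\|e_1+\cdots+e_k\|\to\infty$. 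Since for any infinite $\M$ one can pick disjoint $\mb,\nb\in[\M]^k$, this is precisely the required diameter condition $\lim_k\inf_{\M\in[\N]^\omega}\diam([\M]^k,d_{\bar e}^{(k)})=\infty$.

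\textbf{Constructing the embedding.} Fix $k\in\N$ and, after passing to a diagonal subsequence of columns, define $f\colon[\N]^k\to X$ by $f(\mb)=\sum_{i=1}^k x_{m_i}^{(i)}$. For $\mb,\nb\in[\N]^k$ with $F=\{i:m_i\neq n_i\}$ one has $f(\mb)-f(\nb)=\sum_{i\in F}(x_{m_i}^{(i)}-x_{n_i}^{(i)})$. The upper distortion bound follows by the asymptotic-model inequality applied separately to $\sum_{i\in F}x_{m_i}^{(i)}$ and to $\sum_{i\in F}x_{n_i}^{(i)}$ (using stability of the asymptotic model under restricting to the rows indexed by $F$), followed by the triangle inequality, giving an upper estimate of roughly $2\bigl\|\sum_{i\in F}e_i\bigr\|=2\,d_{\bar e}^{(k)}(\mb,\nb)$. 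The lower bound $\|f(\mb)-f(\nb)\|\gtrsim\bigl\|\sum_{i\in F}e_i\bigr\|$ uniformly in $\mb,\nb$ is the delicate step: it requires controlling the norm of a signed combination of two interleaved column-choices. To achieve this I would invoke the \emph{joint spreading models} of~\cite{AGLM2017} together with an auxiliary unconditionality lemma for normalized weakly null arrays of finite height (the ``third asymptotic notion'' mentioned in the introduction). After extracting a further subarray, these tools stabilize the norms of all such differences uniformly over $\mb,\nb$, and a careful choice of the $\varepsilon$'s produces the explicit distortion bound $3$.

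\textbf{Main obstacle.} The principal difficulty is the lower bound in the embedding. The classical asymptotic-model definition controls norms of the form $\bigl\|\sum a_i x_{m_i}^{(i)}\bigr\|$ only along a single increasing sequence $(m_i)$, whereas $\bigl\|\sum_{i\in F}(x_{m_i}^{(i)}-x_{n_i}^{(i)})\bigr\|$ involves two column choices whose relative interleaving depends on $\mb,\nb$. Joint spreading models combined with the new unconditionality lemma for arrays of finite height are exactly what is needed to supply a uniform two-sided estimate, and pushing the resulting constants down to $3$ is a matter of careful quantitative bookkeeping.
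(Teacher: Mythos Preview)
Your approach is essentially the paper's: the easy direction via the concentration inequality of \cite{BLMS_JIMJ20}, the hard direction via \cite{FOSZ2017} to produce a non-$c_0$ asymptotic model $\bar e$, and the lower bound for the embedding via the joint-spreading-model unconditionality lemma (Lemma~\ref{L:3.10}). Two technical points you should tighten. First, \cite{FOSZ2017} assumes $\ell_1\not\hookrightarrow X$, so you must separately note that if $X$ contains $\ell_1$ then the Hamming graphs embed isometrically and you are done. Second, and more substantively, the paper's embedding is $\phi(\mb)=\sum_{i=1}^k x^{(i)}_{km_i+i}$ rather than your $f(\mb)=\sum_i x^{(i)}_{m_i}$: the shift $m_i\mapsto km_i+i$ guarantees that the column indices appearing in $\phi(\mb)-\phi(\nb)$ are pairwise distinct (since $km_i+i=kn_{i'}+i'$ forces $i=i'$ and $m_i=n_{i'}$), which is precisely the hypothesis ``pairwise different $l_1,\dots,l_m$'' needed to invoke Lemma~\ref{L:3.10}. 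Without such a device your lower-bound step does not go through as stated, because for arbitrary $\mb,\nb$ the indices $m_i,n_{i'}$ can collide; ``passing to a diagonal subsequence of columns'' does not by itself prevent this.
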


Before to deduce Corollary \ref{C:B}, note that the class of all separable and reflexive asymptotic-$\co$ Banach spaces is not analytic. For, if it were analytic, then
by \cite[Theorem 3]{DodosFerenczi2008} there would exist a separable reflexive Banach space that would contain
isomorphic copies of all separable and reflexive asymptotic-$c_0$ Banach spaces. But it was observed in \cite[Remark on Page 120]{OdellSchlumprechtZsak2008} that such a space cannot exist, and thus invoking Corollary \ref{C:B} and Souslin's theorem (see, e.g., \cite[Theorem 14.11]{Kechris1995}) which stipulates that a set is analytic and co-analytic if and only if it is Borel, we have:

\begin{cor}\label{T:3.1}
The class of all separable and reflexive asymptotic-$\co$ Banach spaces is co-analytic non-Borel in the Effros-Borel structure of closed subspaces of $C[0,1]$.
\end{cor}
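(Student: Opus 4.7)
The non-analyticity of $\mathsf{R}\cap\mathsf{As}_{c_0}$ has just been established in the paragraph preceding the corollary, so in view of Souslin's theorem (\cite[Theorem~14.11]{Kechris1995}) the non-Borel conclusion follows automatically the moment we know the class is co-analytic. My plan is therefore to concentrate on proving co-analyticity, namely to exhibit the complement of $\mathsf{R}\cap\mathsf{As}_{c_0}$ in $\mathsf{SB}$ as an analytic set. Theorem~\ref{T:3.2} will supply the geometric content and the Souslin operation over Borel sets will provide the descriptive set theoretic upgrade.

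Since the collection of non-reflexive separable Banach spaces is classically analytic (Bossard), it suffices to produce an analytic set $\mathcal{W}\subseteq\mathsf{SB}$ with
\[(\mathsf{R}\cap\mathsf{As}_{c_0})^c=(\mathsf{SB}\setminus\mathsf{R})\cup\mathcal{W}.\]
I would take $\mathcal{W}$ to be the set of those $X\in\mathsf{SB}$ for which there exist (i) a normalized $1$-suppression unconditional basic sequence $\bar e=(e_j)_j$ with $\lim_k\diam([\N]^k,d^{(k)}_{\bar e})=\infty$, and (ii) for every $k\in\N$ a scalar $s_k>0$ and a map $f_k\colon[\N]^k\to X$ satisfying $s_k\,d^{(k)}_{\bar e}(\mb,\nb)\le\|f_k(\mb)-f_k(\nb)\|\le 3s_k\,d^{(k)}_{\bar e}(\mb,\nb)$ for all $\mb,\nb\in[\N]^k$. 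Theorem~\ref{T:3.2}(2) gives $\mathsf{As}_{c_0}^c\subseteq\mathcal{W}$, with the crucial feature that the distortion is uniformly bounded by $3$; Theorem~\ref{T:3.2}(1) applied to any witness $\bar e$ for $X\in\mathcal{W}\cap\mathsf{R}$ forces $X\notin\mathsf{As}_{c_0}$, so $\mathcal{W}\cap\mathsf{R}\subseteq\mathsf{As}_{c_0}^c$. Together these two inclusions yield the displayed identity.

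The remaining task is to verify that $\mathcal{W}$ is analytic, for which the plan is to code all of its witnesses in a Polish space and realize $\mathcal{W}$ as a projection. Normalized $1$-suppression unconditional basic sequences are parametrized by the Polish space of norms on $c_{00}$ for which the unit-vector basis is normalized and $1$-suppression unconditional; by the Kuratowski--Ryll-Nardzewski selection theorem a Borel assignment $X\mapsto(d_n(X))_{n\in\N}$ of a dense sequence in $X$ may be fixed, after which each candidate $f_k$ is coded by an element of $\N^{[\N]^k}$ (indexing into the dense sequence) and each $s_k$ by a positive rational, up to a harmless perturbation that keeps the distortion in a fixed closed interval. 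Each of the conditions ``$\bar e$ is $1$-suppression unconditional'', ``$\lim_k\diam=\infty$'' and the two-sided Lipschitz inequalities is then Borel in the coded parameters, so $\mathcal{W}$ is the continuous image under projection onto $\mathsf{SB}$ of a Borel subset of the product parameter space, and hence analytic.

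The principal technical obstacle I foresee is the careful bookkeeping in these codings so that every constraint is genuinely Borel in the parameters; this is routine but fiddly. The feature that makes the whole scheme go through, and that would fail if one had only Theorem~\ref{thm}, is the uniform distortion bound $3$ supplied by Theorem~\ref{T:3.2}(2): it removes any ``$\forall\varepsilon$'' quantifier on the distortion side and keeps the definition of $\mathcal{W}$ inside the analytic class. Once $\mathcal{W}$ is shown to be analytic, $(\mathsf{R}\cap\mathsf{As}_{c_0})^c$ is a union of two analytic sets, hence analytic, so $\mathsf{R}\cap\mathsf{As}_{c_0}$ is co-analytic, and combined with the previously noted non-analyticity this yields non-Borelness via Souslin's theorem.
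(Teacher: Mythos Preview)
Your proof is correct and reaches the conclusion, but the route you take to analyticity of the complement differs from the paper's.

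The paper does not parametrize $\bar e$ directly in a Polish space. Instead it first reduces to a \emph{countable} family of metrics: using compactness of $(\mathcal{E}_k,\delta_k)$ it selects basic sequences $\bar e^{(m)}$, $m\in\N$, so that every $d_{\bar e}^{(k)}$ is $4/3$-equivalent to some $d_{\bar e^{(m)}}^{(k)}$. It then builds a tree $\mathcal{T}\subset(\N\times\N)^{<\omega}$ of pairs $(m,k)$ carrying growing diameter constraints and sets $\mathcal{M}=\bigcup_{\sigma\in[\mathcal{T}]}\bigcap_n\mathsf{LC}_{\sigma(n)}$, a Souslin operation over the folklore-analytic classes $\mathsf{LC}_{(m,k)}$; this is precisely the machinery prepared in Section~\ref{sec:2.2}. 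The inclusion $\mathcal{M}\subset(\mathsf{R}\cap\mathsf{As}_{c_0})^c$ then requires a second compactness argument, extracting from a branch of $\mathcal{T}$ a limiting $\bar e\in\mathsf{HU}$ to which Theorem~\ref{T:3.2}(1) applies. Your approach bypasses both compactness steps by treating the space of $1$-suppression unconditional norms on $c_{00}$ as a Polish parameter and exhibiting the witness set directly as the projection of a Borel set; the Souslin operation never appears. This is arguably more direct, at the cost of the coding bookkeeping you flag.

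One small point to tidy: since you approximate each $f_k$ by a map into the dense sequence $(d_n(X))_n$, the distortion bound in the \emph{definition} of the set you actually prove analytic must be relaxed from $3$ to some fixed $D>3$ (the paper uses $4$), rather than ``kept in a closed interval'' after the fact. As written, your $\mathcal{W}$ (distortion $\le 3$ with arbitrary range in $X$) is not literally the set your coding captures. This is harmless for the argument---Theorem~\ref{T:3.2}(1) needs only bounded distortion, and Theorem~\ref{T:3.2}(2) plus perturbation gives the inclusion $\mathsf{As}_{c_0}^c\subset\mathcal{W}$ for the relaxed bound---but the definition should be stated with $D=4$ from the outset.
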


We first fix some notation and make some remarks.
 Denote by $\mathsf{SB}$ the collection of all closed subspaces of the separable Banach space $C[0,1]$, endowed with the
Effros-Borel structure. This is a collection of Borel sets generated by a canonical Polish topology. This structure is very useful to ``measure" the complexity of classes of Banach spaces. We refer the reader
to the fundamental work of B. Bossard on this subject \cite{Bossard2002}.
Consider for a metric space $(M,d)$ and $D\ge 1$ the class \begin{equation*}
\mathsf{LC}^D_M:=\{Y\in\mathsf{SB}\mid M \text{ bi-Lipschitzly embeds into Y with distortion at most } D\}.
\end{equation*}

It is folklore (and not difficult but rather tedious to show) that the class $\mathsf{LC}^D_M$ is analytic, \ie the continuous image of a Polish space. So if we were to prove that a certain class of Banach spaces $\mathcal{B}$ coincides with a class of the form $\mathsf{LC}^D_M$ for some metric space $M$ then we could conclude that $\mathcal{B}$ is analytic. As a concrete example consider the class $\mathsf{SR}$ of all separable super-reflexive Banach spaces. It is known \cite{Baudier2007} that $\mathsf{SR}^c=\mathsf{LC}^D_{\bin_\infty}$ where $\bin_\infty$ is the binary tree of infinite height and $D\ge 1$ is a universal constant, and thus $\mathsf{SR}$ is co-analytic. Bourgain's original metric characterization of super-reflexivity \cite{Bourgain1986a} (from which \cite{Baudier2007} builds on) is in terms of the sequence of binary trees $(\bin_k)_{k\in \N}$, and could be reformulated as: there exists $D\ge 1$ such that
\begin{equation}\label{Eq:1}
\mathsf{SR}^c=\bigcap_{k\in\N}\mathsf{LC}^D_{\bin_k}.
\end{equation}
Since the  countable intersection of analytic sets is analytic, this gives another proof of the co-analyticity of $\mathsf{SR}$. Similarly, we could immediately deduce Corollary  \ref{C:B} if in Theorem \ref{T:A} we could replace all sequences of metric spaces of the form $([\N]^k,d^{(k)}_{\bar e})$ by a countable sub-collection.
But this is not possible as will be shown in Proposition  \ref{P:3.10}.
We overcome the problem of the uncountability   by representing the class of reflexive an asymptotic Banach spaces by a Souslin Scheme.
We  consider the following four classes of Banach spaces:
\begin{equation*}
\mathsf{R} = \{Y\in\mathsf{SB}:Y \text{ is reflexive}\},
\end{equation*}
\begin{equation*}
\mathsf{As}_{c_0} = \{Y\in\mathsf{SB}:Y \text{ is asymptotic-}c_0\},
\end{equation*}
\begin{equation*}
\mathsf{SU} = \{\bar e: \bar e = (e_i)_i \subset C[0,1]\text{ is a normalized $1$-suppression unconditional basic sequence}\},
\end{equation*}
\begin{equation*}
\mathsf{HU} = \{\bar e: \bar e\in \mathsf{SU} \text{ with } \lim_{k\in\N}\diam\big([\N]^k,d_{\eb}^{(k)}\big)=\infty\}.
\end{equation*}

Keeping in mind descriptive set theoretic applications, Theorem \ref{T:3.2} can be succinctly reformulated as
\begin{equation}
\mathsf{R}\cap\mathsf{As}_{\co}^c=\mathsf{R}\cap\bigcup_{\bar e\in\mathsf{HU}}\bigcap_{k\in\N}\mathsf{LC}^3_{\big([\N]^k,d^{(k)}_{\eb}\big)}.
\end{equation}

\begin{proof}[Proof of Corollary \ref{C:B}, using Theorem \ref{T:3.2}] We need  to show that   $\mathsf{R}\cap \mathsf{As}_{\co}$ is co-analytic.

A compactness argument implies that there exists a countable collection
 $\bar e^{(m)} = (\bar e^{(m)}_i)$, $m\in\mathbb{N}$, so that for every $\bar e\in \mathsf{SU}$ and $k\in\mathbb{N}$
there is $m\in\mathbb{N}$ so that $d_{\bar e}^{(k)}$ and $d_{\bar e^{(m)}}^{(k)}$ are $4/3$ equivalent.
Indeed, for fixed $k$, choose a countable set $\big((E^{(k)}_j, (e^{(k,j)}_i)_{i=1}^k):j\in\N\big)$ of $k$-dimensional subspaces with an 1-supression unconditional  and normalized basis
which is dense in the set of all  $k$-dimensional subspaces with an 1-supression unconditional  and normalized basis,  with respect to the metric introduced
at the beginning  of Subsection \ref{subsec:2.4}. For every $k,j\in\N$ choose an arbitrary
extension of $(e^{(k,j)}_i)_{i=1}^k$ into an infinite 1-supression unconditional and normalized basic sequence  $\bar e^{(k,j)}=(e^{(k,j)}_i)_{i=1}^\infty$.
Finally  reorder $\big(\bar e^{(k,j)}\big)_{k,j}$ into $\big(\bar e^{(m)}\big)_m$.

For simplicity denote $M_m^{(k)} := ([\mathbb{N}]^k,d_{\bar e^{(m)}}^{(k)})$, for $m,k\in\mathbb{N}$. Let
$$\mathcal{T} = \big\{((m_i,k_i))_{i=1}^n: n\in\mathbb{N}, \diam(M_{m_i}^{(k_j)}) \geq j,\text{ for all }1\leq j\leq  i\leq n\big\},$$
and observe that $\mathcal{T}$ is a countable, infinitely branching tree of infinite height (as partial order we just consider the  extension of finite sequences of pairs of natural numbers). Denote by
$$[\mathcal{T}] = \{ ((m_i,k_i))_{i=1}^\infty:  ((m_i,k_i))_{i=1}^n\in\mathcal{T}\text{ for all $n\in\mathbb{N}$}\},$$
the collection of branches of $\mathcal{T}$.
For $m,k\in\mathbb{N}$, define
$$\mathsf{LC}_{(m,k)} = \big\{Y\in\mathsf{SB}:  M_m^{(k)} \text{ embeds bi-Lipschitzly into $Y$ with distortion at most $4$}\}.$$
Recall that $\mathsf{LC}_{(m,k)}$ is an analytic set. A crucial observation is that the set
$\mathcal{M} := \cup_{\sigma\in[\mathcal{T}]}\cap_{n=1}^\infty\mathsf{LC}_{\sigma(n)}$ is also analytic since it is obtained via a Suslin operation of analytic sets. The properties of distances $d_{\bar e^{(m)}}^{(k)}$ and the  second part of
 Theorem \ref{T:3.2} imply that
\begin{equation}\label{Eq:3}
(\mathsf{As}_{c_0})^c\cap\mathsf{R}\subset\mathcal{M}.
\end{equation}
Additionally, the first part  in Theorem \ref{T:3.2} yields that $\mathsf{R}\cap\mathsf{As}_{c_0}\cap\mathcal{M} = \emptyset$ or equivalently
\begin{equation}\label{Eq:4}
\mathcal{M}\subset (\mathsf{R} \cap \mathsf{As}_{c_0})^c= (\mathsf{R})^c\cup(\mathsf{As}_{c_0})^c.
\end{equation}
Indeed, if a Banach space $X$ belongs to $\mathcal{M}$, then there exists an infinite branch $((m_i,k_i))_{i=1}^\infty$ in $[\mathcal{T}]$ such that $M_{m_i}^{(k_i)}$ embeds bi-Lipschitzly into $X$ with distortion at most $4$. Then a compactness argument yields the existence of $\bar e \in \mathsf{SU}$ and a sequence $(l_j)_j$ such that for all $i\in \N$, $(e^{(m_{l_j})}_1,...,e^{(m_{l_j})}_{k_i})_{j\ge i}$ tends to $(e_1,...,e_{k_i})$ for the Banach-Mazur distance. It then follows from our construction of $\mathcal{T}$ that $\bar e \in \mathsf{HU}$ and that for all $i\in \N$, $([\N]^{k_i},d_{\bar e}^{(k_i)})$ embeds bi-Lipschitzly into $X$ with distortion at most $4$. Since the sequence $(k_i)_i$ cannot be bounded, we deduce from the first part  of Theorem \ref{T:3.1} that $X$ is not in $\mathsf{R}\cap\mathsf{As}_{c_0}$.

It now follows from \eqref{Eq:3}, \eqref{Eq:4}, and elementary set-theoretic manipulations, that
\begin{equation}
(\mathsf{R} \cap \mathsf{As}_{c_0})^c=(\mathsf{R})^c\cup \mathcal{M}.
\end{equation}
We already observed that $\mathcal{M}$ is analytic and it is known (see  \cite[Corollary 3.3]{Bossard1997}) that the set $(\mathsf{R})^c$ is analytic.
Analyticity being preserved by taking finite unions, it follows that $\mathsf{R} \cap \mathsf{As}_{c_0}$ is co-analytic.

\end{proof}

\subsection{A bi-Lipschitz characterization of asymptotic-$\co$ spaces in the reflexive setting}
\label{sec:3.2}
In this section we pay our debt to Section \ref{sec:3.1} and prove Theorem \ref{T:3.2} (and thus Theorem \ref{T:A}). We will prove the two implications separately. But first we gather some essential properties of those metrics that are naturally generated by $1$-suppression unconditional sequences, and which play a central role in this section.   We call a basic sequence $(e_i)$
{\em $c$-suppression unconditional,} for some $c\ge 1$, if
for any $(a_i)\subset c_{00}$ and any $A\subset \N$
$$\Big\|\sum_{i\in A} a_i e_i\Big\|\le c \Big\|\sum_{i=1}^\infty a_i e_i\Big\|.$$
We call $(e_i)$ $c$-unconditional if for any $(a_i)\subset c_{00}$ and any $(\sigma_i)\in\{\pm 1\}^\N$
$$\Big\|\sum_{i=1}^\infty   a_i e_i\Big\|\le c \Big\|\sum_{i=1}^\infty  \sigma_i a_i e_i\Big\|.$$
Note that a $c$-unconditional basic sequence is $c$-suppression unconditional, and that any  $c$-suppression unconditional is $2c$-unconditional.

Recall from the introduction that for an arbitrary normalized $1$-suppression unconditional basis $\eb = (e_j)_{j\in\N}$ of a Banach space  $(E,\|\ \|)$, we define for every $k\kin\N$ a map $d^{(k)}_{\eb}:[\N]^k\times[\N]^k\to[0,\infty)$ such that for every $\mb=\{m_1,m_2,\ldots,m_k\}$ and $\nb=\{n_1,n_2,\ldots,n_k\}$ in $[\N]^k$
\begin{equation}
d^{(k)}_{\eb}(A,B) = \big\|\sum_{j\in F}e_j\big\|, \text{ where }F = \{j: m_j\neq n_j\}.
\end{equation}

The only metric axiom which is not trivially satisfied and that needs attention to ensure that the map $d^{(k)}_{\eb}$ is a genuine metric is the triangle inequality. This is where the unconditionality condition is needed. If $\mb=\{m_1,\ldots,m_k\}$, $\nb=\{n_1,\ldots,n_k\}$, and $\lb = \{l_1,\ldots,l_k\}$, set $F = \{j: m_j\neq n_j\}$, $G = \{j: m_j\neq l_j\}$, and $H = \{j: n_j\neq l_j\}$.
Since the set $F\subset G\cup H$  we have
$$F = F\cap(G\cup H) = (F\cap G)\cup ((F\setminus G)\cap H).$$
It follows from $1$-suppression unconditionality that
\begin{align*}
d_{\eb}^{(k)}(\mb,\nb) &= \Big\|\sum_{j\in F}e_j\Big\| \leq  \Big\|\sum_{j\in F\cap G}e_j\Big\| +  \Big\|\sum_{j\in (F\setminus G)\cap H}e_j\Big\| \\
&\leq  \Big\|\sum_{j\in G}e_j\Big\|+  \Big\|\sum_{j\in H}e_j\Big\|
= d_{\eb}^{(k)}(\mb,\lb) + d_{\eb}^{(k)}(\lb,\nb).
\end{align*}

The metric $d^{(k)}_{\eb}$ is similar to the Hamming metric in the sense that for $\mb = \{m_1,\ldots,m_k\}$ and  $\nb = \{n_1,\ldots,n_k\}$ the distance $d_{\eb}^{(k)}(\mb,\nb)$ is determined by the set $F\subset \{1,2,\ldots,k\}$ of coordinates $i$ on which $m_i$ and $n_i$ differ. The following important features directly follow from the definition of the metric and classical Banach space theory.
\begin{lem}\label{L:3.3}
Let $\eb = (e_j)_{j\in \N}$ be a normalized 1-suppression unconditional basis of a Banach space  $(E,\|\ \|)$.
\begin{itemize}
\item[(i)] If $\eb = (e_j)_{j\in\N}$ is the unit vector basis of $\ell_1$ then $d^{(k)}_{\eb}$ is the Hamming distance $d^{(k)}_\ham$ on $[\N]^k$. Hence, for any normalized 1-suppression unconditional basic sequence $\eb = (e_j)_{j\in\N}$ and any $\mb$, $\nb$ in $[\N]^k$ we have $d^{(k)}_{\eb}(\mb,\nb) \leq d^{(k)}_\ham(\mb,\nb)$.
\item[(ii)]For every $k\in\N$ and every $\M\in[\N]^\omega$ we have $$\diam([\M]^k,d_{\eb}^{(k)})= \Big\|\sum_{j=1}^{k}e_j\Big\|.$$
In particular, $\lim_k\mathrm{diam}\big([\N]^k,d_{\eb}^{(k)}\big) = \infty$ if and only if $\eb = (e_j)_{j\in\N}$ is not equivalent to the unit vector basis of $c_0$.


\end{itemize}
\end{lem}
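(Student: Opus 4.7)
For part (i), the claim about $\ell_1$ is a direct computation: if $(e_j)$ is the unit vector basis of $\ell_1$, then $\|\sum_{j\in F}e_j\|_{\ell_1}=|F|$, which is exactly the number of coordinates on which $\mb$ and $\nb$ disagree, i.e.\ the Hamming distance. The inequality $d^{(k)}_{\eb}\le d^{(k)}_{\ham}$ then reduces to $\|\sum_{j\in F}e_j\|\le|F|$, which is just the triangle inequality combined with the normalization $\|e_j\|=1$. This part should go in a single sentence.

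For the upper bound in part (ii), given $\mb,\nb\in[\M]^k$ the set $F=\{j:m_j\neq n_j\}$ is contained in $\{1,\ldots,k\}$, so $1$-suppression unconditionality yields
\[
d^{(k)}_{\eb}(\mb,\nb)=\Big\|\sum_{j\in F}e_j\Big\|\le\Big\|\sum_{j=1}^{k}e_j\Big\|.
\]
For the matching lower bound, I would enumerate $\M=\{a_1<a_2<\cdots\}$ and take $\mb=\{a_1,\ldots,a_k\}$, $\nb=\{a_{k+1},\ldots,a_{2k}\}$; these are disjoint, so $F=\{1,\ldots,k\}$ and $d^{(k)}_{\eb}(\mb,\nb)=\|\sum_{j=1}^{k}e_j\|$. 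This establishes the equality.

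For the ``in particular'' assertion, note that by $1$-suppression unconditionality the sequence $\bigl(\|\sum_{j=1}^{k}e_j\|\bigr)_k$ is nondecreasing, so the limit exists in $[0,\infty]$. The only nontrivial step is the standard Banach-space fact that a normalized unconditional basic sequence $(e_j)$ is equivalent to the unit vector basis of $c_0$ if and only if $\sup_k\|\sum_{j=1}^{k}e_j\|<\infty$: one direction is immediate, and the other follows because under unconditionality the uniform bound on $\|\sum_{j=1}^{k}e_j\|$ lets one control $\|\sum_{j=1}^{k}a_je_j\|$ by $C\max_j|a_j|$ (via the sign-flip characterization of unconditionality and a convex-combination/averaging argument, or equivalently by observing that $\sum e_j$ is weakly unconditionally Cauchy but not norm-convergent, so Bessaga--Pe\l czy\'nski applies). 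There is no real obstacle here; the main thing to be careful about is the constants implicit in ``$1$-suppression unconditional'' versus ``unconditional'' (a factor of $2$), which only affects the equivalence constant with $c_0$ and not the dichotomy $\sup<\infty$ vs.\ $\sup=\infty$. The whole lemma is intended as a bookkeeping device, so I expect the proof to be short.
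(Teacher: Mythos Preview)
Your proposal is correct and matches the paper's treatment: the paper does not give a detailed proof of this lemma, stating only that these ``important features directly follow from the definition of the metric and classical Banach space theory.'' Your argument fills in precisely the standard details the authors have in mind---the triangle inequality for (i), 1-suppression unconditionality plus a pair of disjoint $k$-sets for the diameter formula in (ii), and the classical fact (via the lattice/sign-averaging bound for unconditional bases, or Bessaga--Pe\l czy\'nski) that a normalized unconditional basic sequence is equivalent to the $c_0$ basis iff the partial sums $\sum_{j=1}^k e_j$ are bounded.
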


The domination of the metric $d^{(k)}_{\eb}$ by the Hamming metric allows us to use the concentration inequality from \cite{BLMS_JIMJ20} to prove the non-embeddability implication of Theorem \ref{T:3.2}. Indeed, assume that $Y$ is asymptotic-$c_0$ and reflexive, and let $\bar e = (e_j)_j$ be a normalized $1$-suppression unconditional sequence such that
$\lim_k\diam\big([\M]^k,d_{\eb}^{(k)}\big)=\infty$.
The crucial observation here is that the domination property in Lemma \ref{L:3.3} (i), can be equivalently restated by saying that the identity maps from  $([\N]^k, d_{\ham})$ to $([\N]^k, d_{\eb})$ are $1$-Lipschitz, and a straightforward application of \cite[Theorem B]{BLMS_JIMJ20} shows that there exists $C\in [1,\infty)$ so that for every $\eb\in \mathsf{SU}$, every $k\in\N$ and, every $1$-Lipschitz map $f:\big([\N]^k,d^{(k)}_{\eb}\big)\to Y$ there exists $\M\in[\N]^\omega$ so that
\begin{equation}\label{Eq:7}
\diam\big( f([\M]^k)\big) \leq C.
\end{equation}

If moreover $\eb\in \mathsf{HU}$, inequality \eqref{Eq:7} and (ii) of Lemma \ref{L:3.3} clearly prevent the equi-bi-Lipschitz embeddability of the sequence $\big([\N]^k,d^{(k)}_{\eb}\big)_{k}$, or in other words $\sup_{k\in\N}c_Y\big([\N]^k,d^{(k)}_{\eb}\big)=\infty$ necessarily. We thus proved:

\begin{thm}\label{T:5.4} Let $X$ be a separable asymptotic-$c_0$ reflexive Banach space. Then for all $1$-suppression unconditional
sequence $\bar e = (e_j)_j$ such that
$\lim_k\inf_{\M\in[\N]^\omega}\diam\big([\M]^k,d_{\eb}^{(k)}\big)=\infty$ one has $$\sup_{k\in\N}c_X\big([\N]^k,d^{(k)}_{\eb}\big)=\infty.$$
\end{thm}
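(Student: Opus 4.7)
The plan is to reduce the bi-Lipschitz non-embeddability of $([\N]^k, d_{\bar e}^{(k)})$ into $X$ to the concentration phenomenon already established for the Hamming graphs in Theorem \ref{thm}, using as a bridge the pointwise domination $d_{\bar e}^{(k)} \le d_{\ham}^{(k)}$ from Lemma \ref{L:3.3}(i) together with the hereditary divergence of the $d_{\bar e}^{(k)}$-diameters supplied by the hypothesis on $\bar e$ and Lemma \ref{L:3.3}(ii).

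First I would fix a separable reflexive asymptotic-$c_0$ Banach space $X$ and let $C \ge 1$ be the constant associated to $X$ by Theorem \ref{thm}. Arguing by contradiction, suppose $\bar e$ is a $1$-suppression unconditional sequence with $\lim_k \inf_{\M \in [\N]^\omega} \diam([\M]^k, d_{\bar e}^{(k)}) = \infty$ while $D := \sup_k c_X([\N]^k, d_{\bar e}^{(k)}) < \infty$. For each $k \in \N$, I would pick $s_k > 0$ and a map $f_k \colon [\N]^k \to X$ such that
\[ s_k\, d_{\bar e}^{(k)}(\mb, \nb) \le \|f_k(\mb) - f_k(\nb)\| \le 2D s_k\, d_{\bar e}^{(k)}(\mb, \nb) \quad \text{for all } \mb, \nb \in [\N]^k. \]
After replacing $f_k$ by $f_k/(2D s_k)$, which is a harmless rescaling because $X$ is a Banach space, I may assume that $f_k$ is $1$-Lipschitz from $([\N]^k, d_{\bar e}^{(k)})$ to $X$ and satisfies the lower bound $\|f_k(\mb) - f_k(\nb)\| \ge \tfrac{1}{2D}\, d_{\bar e}^{(k)}(\mb, \nb)$.

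Next, Lemma \ref{L:3.3}(i) makes the formal identity $([\N]^k, d_{\ham}^{(k)}) \to ([\N]^k, d_{\bar e}^{(k)})$ $1$-Lipschitz, so $f_k$ is also $1$-Lipschitz as a map out of the Hamming graph $\ham_k^\omega$. Applying Theorem \ref{thm} to $f_k$ yields an infinite $\M_k \subset \N$ with $\sup_{\mb, \nb \in [\M_k]^k} \|f_k(\mb) - f_k(\nb)\| \le C$; the lower estimate then forces $\diam([\M_k]^k, d_{\bar e}^{(k)}) \le 2DC$, and hence $\inf_{\M \in [\N]^\omega} \diam([\M]^k, d_{\bar e}^{(k)}) \le 2DC$ for every $k \in \N$. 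This contradicts the divergence hypothesis on $\bar e$, proving the result.

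I do not anticipate any substantive obstacle here: the whole argument is a transfer of a known concentration inequality along a weaker metric. The only small care required is the rescaling of the $f_k$, which is automatic because the definition of $c_X$ allows an arbitrary positive scaling factor and $X$ is a Banach space. The content of the theorem is essentially the observation that the Hamming concentration inequality of \cite{BLMS_JIMJ20} automatically controls every $1$-suppression unconditional weakening of $d_{\ham}^{(k)}$, and that hereditary unboundedness of the $d_{\bar e}^{(k)}$-diameters is exactly the condition needed to convert that concentration into a lower bound on distortion.
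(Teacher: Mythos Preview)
Your proposal is correct and follows essentially the same approach as the paper: both use Lemma \ref{L:3.3}(i) to transfer $1$-Lipschitz maps from $d_{\eb}^{(k)}$ to $d_{\ham}^{(k)}$, apply the concentration inequality from \cite{BLMS_JIMJ20} (Theorem \ref{thm}), and then contrast the resulting bound on $\diam(f([\M]^k))$ with the diverging diameters. The paper phrases this as a direct concentration estimate (inequality \eqref{Eq:7}) while you package it as a contradiction with an explicit distortion bound, but the substance is identical.
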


For the remaining implication  and the ``moreover" part  of Theorem \ref{T:3.2} we may assume that $X$ does not contain an isomorphic copy o $\ell_1$. Indeed, it is clear that the graphs $\ham_k^\omega$, $k\in \N$, embed isometrically into $\ell_1$. For $X$ separable, but not containing $\ell_1$, we will  use  the following  result by  Freeman, Odell, Sari, and Zheng.

\begin{thm}\label{T:3.5} \cite{FOSZ2017}*{Theorem 4.6}
If a separable Banach space $X$ does not contain any isomorphic copy of $\ell_1$ and all the asymptotic models generated by normalized weakly null arrays are
equivalent to the $c_0$ unit vector basis, then $X$ is asymptotically $c_0$.
\end{thm}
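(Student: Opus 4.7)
The plan is to prove the contrapositive: assuming $X$ is not asymptotic-$\co$, produce a normalized weakly null array in $X$ whose asymptotic model is not equivalent to the $\co$-basis, contradicting the hypothesis.

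\textbf{Uniformization.} First I would upgrade the hypothesis to the existence of $C\geq 1$ such that every asymptotic model generated by a normalized weakly null array in $X$ is $C$-equivalent to the unit vector basis of $\co$. Otherwise, for each $n\kin\N$ one may find a normalized weakly null array in $X$ whose asymptotic model is $\co$-equivalent but with equivalence constant at least $n$ on some initial segment of length $\ell_n\to\infty$. Splicing these arrays row-by-row into a single array, passing to a weakly null subarray, and applying Proposition \ref{P:3.5} produces an asymptotic model whose initial segments have arbitrarily bad $\co$-equivalence constants, so the model itself cannot be equivalent to $\co$, a contradiction.

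\textbf{Realizing the failure on weakly null trees.} Assuming now that $X$ is not asymptotic-$\co$, for each $n\kin\N$ there exist $k_n\kin\N$ and $(e^{(n)}_i)_{i=1}^{k_n}\kin\{X\}_{k_n}$ that fails to be $2C$-equivalent to the $\ell_\infty^{k_n}$-basis. Compactness of $(\cE_k,\delta_k)$ together with the fact that every $k$-dimensional normalized monotone basis is $c_k$-equivalent to $\ell_\infty^k$ for some $c_k\kgr 0$ forces $k_n\to\infty$. Because $X$ does not contain $\ell_1$, Rosenthal's theorem provides a weakly Cauchy subsequence of any bounded sequence in $X$; passing to differences, the asymptotic game of Definition \ref{D:3.1} can be played along weakly null sequences, yielding a tree-version of Lemma \ref{lem:2.7} valid beyond the separable-dual setting: for each $n$ and each $\vp\kgr 0$ there is a countably branching normalized weakly null tree $\{x^{(n)}_{\nb}:\nb\kin[\N]^{\leq k_n}\setminus\{\emptyset\}\}$ in $S_X$ all of whose branches are $(1\kplus\vp)$-equivalent to $(e^{(n)}_i)_{i=1}^{k_n}$.

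\textbf{From trees back to an array.} The heart of the argument is to assemble these trees into a single normalized weakly null array $(w^{(i)}_j)_{i,j\kin\N}$ whose asymptotic model witnesses the contradiction. For each $i$, the $i$-th row is selected by traversing, at level $i$, successive branches of trees of height at least $i$, diagonalized over $n$ and pruned via a Ramsey argument so as to remain weakly null. A further Ramsey extraction together with Proposition \ref{P:3.5} produces an asymptotic model $(\tilde e_i)_{i\kin\N}$. The assembly is arranged so that for each $n$ there exist indices $1\leq i_1<\cdots<i_{k_n}$ along which the corresponding rows of the array reproduce, up to a factor $(1\kplus\vp)^2$, an entire branch of the height-$k_n$ tree, and hence the basis $(e^{(n)}_i)_{i=1}^{k_n}$. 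Consequently the subsequence $(\tilde e_{i_j})_{j=1}^{k_n}$ of the asymptotic model fails to be $2C(1\kplus\vp)^{-2}$-equivalent to $\ell_\infty^{k_n}$, so $(\tilde e_i)_{i}$ cannot be $C$-equivalent to the $\co$-basis, contradicting the uniformization step.

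\textbf{Main obstacle.} The delicate point is the assembly: converting a family of trees of heights $k_n\to\infty$ into a single array whose rows are genuinely weakly null \emph{sequences}, rather than merely branches of trees, while keeping enough geometric control that certain subsets of rows, indexed at suitable positions, faithfully reproduce whole tree branches of each height $k_n$. Both the use of Rosenthal's theorem to pass from weakly Cauchy to weakly null via differences and the careful Ramsey pruning to stabilize joint norms along the diagonal are essential here. This is exactly where the $\ell_1$-free hypothesis is needed; it also guarantees coherence between the MMT asymptotic game underlying $\{X\}_k$ and the Halbeisen--Odell array-based asymptotic models.
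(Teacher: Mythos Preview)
The paper does not prove Theorem~\ref{T:3.5}; it is quoted verbatim from \cite{FOSZ2017} and used as a black box, so there is no proof in the present paper to compare against.

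Your outline nonetheless follows the correct strategic arc of the argument in \cite{FOSZ2017}: contrapositive, uniformization of the $c_0$-equivalence constant, realization of bad asymptotic spaces on weakly null trees (with Rosenthal's theorem substituting for separability of $X^*$), and assembly into a single array. The uniformization and tree-realization steps are essentially as you describe. The assembly step, however, which you rightly flag as the main obstacle, is the entire substance of the theorem, and your description does not close the gap. The concrete difficulty is this: if the $i$-th row of your array consists of level-$i$ nodes drawn from various branches of the trees, then a diagonal selection $(w^{(i_1)}_{j_1},\ldots,w^{(i_{k_n})}_{j_{k_n}})$ with $j_1<\cdots<j_{k_n}$ will in general pick nodes lying on \emph{different} branches, and the tree hypothesis only controls the joint norm of nodes along a \emph{single} branch. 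For the asymptotic model $(\tilde e_{i_\ell})_{\ell=1}^{k_n}$ to inherit the bad $\ell_\infty$-equivalence of $(e^{(n)}_i)_{i=1}^{k_n}$, one needs this control to persist across \emph{all} sufficiently spread diagonal selections, not just one. Achieving this requires a further stabilization of the trees---a Ramsey-type pruning ensuring that the joint distribution of level-$i$ nodes is essentially independent of the particular branch chosen---and this is precisely where the work in \cite{FOSZ2017} lies. Your phrase ``the assembly is arranged so that \ldots\ rows reproduce an entire branch'' asserts the conclusion of that stabilization without indicating how to obtain it.
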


Theorem \ref{T:3.5} establishes a crucial connection between asymptotic models and asymptotic structure in the extremal $\co$-case. In the light of the new information of Theorem \ref{T:3.5}, the completion of the proof of Theorem \ref{T:3.2} boils down to showing that if a separable reflexive Banach that admits at least one asymptotic model generated by normalized weakly null arrays that is not equivalent to the $c_0$ unit vector basis, contains equi-bi-Lipschitzly a sequence $\big([\N]^k,d^{(k)}_{\eb}\big)_{k}$, for some $\eb\in \mathsf{HU}$.
Slightly anticipating the ensuing argument, Lemma \ref{L:3.3} (ii) says that if $(e_i)$ is an asymptotic model (generated by a normalized weakly null array) that is not equivalent to the $c_0$ unit vector basis, then $(e_i)\in \mathsf{HU}$. This observation provides a natural candidate for the embedding map. Indeed, arrays (and in turn asymptotic models) are intimately connected to Hamming-type metrics in the sense that if $\big(x^{(i)}_j:i, j\in \N\big)\subset S_X$ is an infinite array, then the map $\phi:[\N]^k\to X$ defined for any $\mb = \{m_1,m_2,\ldots,m_k\}$ by
\[\phi(\mb) = \sum_{i=1}^kx^{(i)}_{m_i}.\]
 is clearly $1$-Lipschitz with respect to $d_{\ham}$. As we will shortly see if the array generates a $1$-suppression unconditional asymptotic model $\eb$ we can slightly modify $\phi$ by ``pushing the vectors far enough along the sequence'' and obtain a map that is Lipschitz (with a slightly larger distortion) with respect to $d_{\eb}$. Estimating the lower Lipschitz bound however will require a strengthening of the unconditionality condition, and is the content of the crucial  Lemma \ref{L:3.10} below. This is done via the notion of joint spreading models recently introduced by Argyros, Georgiou, Lagos, and Motakis \cite{AGLM2017}, a notion that we briefly recall together with some ingredients needed in the proof of Lemma \ref{L:3.10}.

 \begin{defin}[Plegmas]\label{D:3.8}\cite{AKT2013}*{Definition 3}
Let $k,m\in\N$ and $s_i = (s^{(i)}_1, s^{(i)}_2,\ldots,s^{(i)}_m)\subset \N$ for $i=1,\ldots, k$. The family $(s_i)_{i=1}^k$ is called a {\em plegma } if
$$s^{(1)}_1 < s_1^{(2)} < \cdots< s_1^{(k)} < s_2^{(1)}< s_2^{(2)}<\cdots< s_2^{(k)}<\cdots<s_m^{(1)} < s^{(2)}_m < \cdots < s^{(k)}_m.$$
\end{defin}

A family $\big(x^{(i)}_j:i=1,2,\ldots, k, j\in \N\big)\subset X,$ will be referred to an {\em array of height $k$ in $X$}, and we can extend naturally the terminology for infinite arrays introduced in Section \ref{sec:3.2} to arrays of finite height.

\begin{defin}[Joint spreading models]\label{D:3.9} \cite{AGLM2017}*{Definition 3.1}
Let   $\big(x_j^{(i)}: 1\kleq i\kleq k, j\kin\N\big) $ and $\big(e_j^{(i)}: 1\kleq i\kleq  k, j\kin\N\big)$ be two normalized arrays of height $k$ in the Banach spaces $X$, and $E$, respectively, whose rows are normalized and basic. We say that  $(x_j^{(i)}:1\kleq i\kleq  k,  j\kin\N) $  {\em generates  $(e_j^{(i)}: 1\kleq i\kleq  k, j\kin\N)$  as a joint spreading model} if there exists a null sequence of positive real numbers $(\vp_m)_{m=1}^\infty$ so that for every $m\in\N$, every  plegma $(s_i )_{i=1}^k$, $s_i=(s^{(i)}_j :j=1,2,\ldots, m)$ for $1\leq i\leq k$, with $\min(s_1)=s^{(1)}_1 \geq m$, and scalars $((a_j^{(i)})_{j=1}^m)_{i=1}^k$ in $[-1,1]$ we have
\[\Bigg|\Big\|\sum_{j=1}^m\sum_{i=1}^k a_j^{(i)}x^{(i)}_{s^{(i)}_j}\Big\|_X - \Big\|\sum_{j=1}^m\sum_{i=1}^ka_j^{(i)}e^{(i)}_{j}\Big\|_E\Bigg|<\vp_m.\]
\end{defin}

Joint spreading models are naturally related to spreading models  as well as asymptotic models. If  $\big(x_j^{(i)}: 1\kleq i\kleq  k, j\kin\N\big) $ generates  $\big(e_j^{(i)}:  1\kleq i\kleq  k, j\kin\N\big)$ as a joint spreading model, then  $(e_j^{(i)})_{j=1}^\infty$   is the spreading model of $(x^{(i)}_j)_{j=1}^\infty$, for $i=1,2,\ldots, k$. On the other hand, if  $k\in\N$ and  $\big(x^{(i)}_j:i=1,2,\ldots, k, j\in\N\big)\subset S_X$   if  a normalized  weakly null array of height $k$, then we extend this array to an infinite array $\big(x^{(i)}_j:i=1,2,\ldots, k, j\in \N\big)$, by letting $$x^{(sk+i)}_j=x^{(i)}_j, \text{ for $s\in\N$ and $i=1,2, \ldots, k$.}$$ By Proposition \ref{P:3.5}  we can pass to a subarray  $(z^{(i)}_{j}: i\in\N, j\in\N)$  of  $(x^{(i)}_{j}: i\in\N, j\in\N)$ which admits an asymptotic model $(e_j)$. Now letting $e^{(i)}_{j}= e_{(j-1)k+ i}$, for $i=1,2, \ldots, k$ and $j\in\N$ we observe that
the array $(e_j^{(i)}:1\kleq i\kleq  k,  j\kin\N)$ is the {\em joint spreading model of  $(z^{(i)}_{j}: i=1,2, \ldots, k, j\in\N)$}.
In particular this argument shows that joint spreading models of normalized weakly null arrays are $1$-supression unconditional.
\begin{lem}\label{L:3.10}
Let $X$ be a Banach space and $(x_j^{(i)}:1\kleq i\kleq k, j\kin\N)$ be a  normalized weakly null array of height  $k$. Then for every $\vp>0$ and $m\in\N$ there exists $\L\in[\N]^\omega$ so that for every $i_1,\ldots,i_m$ in $\{1,\ldots,k\}$ (not necessarily different) and pairwise different $l_1,\ldots,l_m\in \L$ the sequence $(x_{l_j}^{(i_j)})_{j=1}^m$ is $(1+\vp)$-suppression unconditional.
\end{lem}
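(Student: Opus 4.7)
The strategy is to exploit joint spreading models, which the authors have just introduced. By the remark preceding the lemma (which invokes Proposition \ref{P:3.5}), I first pass to a common subsequence $N_0\in[\N]^\omega$ so that the subarray $(x_j^{(i)}:1\leq i\leq k,\ j\in N_0)$ generates a joint spreading model $(e_j^{(i)}:1\leq i\leq k,\ j\in\N)$, and this joint spreading model is automatically $1$-suppression unconditional. Let $(\eta_M)_{M=1}^\infty$ denote the associated null sequence from Definition \ref{D:3.9}.

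Given $\vp>0$ and $m\in\N$, I choose $\delta>0$ with $2\delta/(1-\delta)\leq \vp$, pick $M\geq m$ with $\eta_M<\delta$, and extract $\L\subset N_0$ so that $\min\L\geq M$ and consecutive elements of $\L$ are separated by at least $2k$ elements of $N_0$. To check the conclusion, fix distinct $l_1,\ldots,l_m\in\L$; after reordering, assume $l_1<\cdots<l_m$, and fix $i_1,\ldots,i_m\in\{1,\ldots,k\}$. For any $F\subset\{1,\ldots,m\}$ and scalars $(a_j)_{j=1}^m$, which by homogeneity we normalize to satisfy $\max_j|a_j|=1$, I construct an $M$-column plegma $(s_i)_{i=1}^k$ in $N_0$ by setting $s_j^{(i_j)}=l_j$ for $1\leq j\leq m$, filling the remaining slots $s_j^{(i)}$ (for $i\neq i_j$) with elements of $N_0$ respecting the plegma ordering --- feasible by the sparsity of $\L$ --- and appending $M-m$ dummy columns chosen from $N_0$ beyond the previous indices. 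Defining $a_j^{(i)}=a_j$ when $i=i_j$ and $j\in F$, and $a_j^{(i)}=0$ otherwise, one has
\[\sum_{j,i}a_j^{(i)}x_{s_j^{(i)}}^{(i)}=\sum_{j\in F}a_jx_{l_j}^{(i_j)}, \qquad \sum_{j,i}a_j^{(i)}e_j^{(i)}=\sum_{j\in F}a_je_j^{(i_j)},\]
and the analogous identities hold with $F$ replaced by $\{1,\ldots,m\}$.

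Since $\min(s_1)\geq M$ and all $a_j^{(i)}\in[-1,1]$, Definition \ref{D:3.9} gives approximation error at most $\eta_M<\delta$ for both sums. Invoking the $1$-suppression unconditionality of $(e_j^{(i)})$ to compare the $E$-sides, together with the lower bound $\big\|\sum_j a_je_j^{(i_j)}\big\|_E\geq 1$ (obtained by suppressing to a single basis vector whose coefficient has modulus $1$), I conclude
\[\Big\|\sum_{j\in F}a_jx_{l_j}^{(i_j)}\Big\|_X\leq \Big\|\sum_{j=1}^m a_jx_{l_j}^{(i_j)}\Big\|_X+2\delta\leq \Big(1+\tfrac{2\delta}{1-\delta}\Big)\Big\|\sum_{j=1}^m a_jx_{l_j}^{(i_j)}\Big\|_X\leq (1+\vp)\Big\|\sum_{j=1}^m a_jx_{l_j}^{(i_j)}\Big\|_X.\]

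The main obstacle is that the error $\eta_M$ in Definition \ref{D:3.9} is tied to the number of plegma columns, while our sum involves only $m$ vectors. I overcome this by the padding trick: appending $M-m$ dummy columns with zero coefficients leaves the sums unchanged but sharpens the approximation error from $\eta_m$ to $\eta_M<\delta$. The other technical point, namely accommodating arbitrary $i_j$'s with possible repetitions, is exactly what forces the sparsity condition on $\L$, ensuring that the prescribed positions $s_j^{(i_j)}=l_j$ can always be completed inside $N_0$ to a valid plegma.
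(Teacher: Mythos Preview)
Your proof is correct and follows essentially the same route as the paper: pass to a subarray generating a $1$-suppression unconditional joint spreading model, embed the given indices $(l_j,i_j)$ into a plegma using the $2k$-sparsity of $\L$, and transfer the suppression unconditionality from $E$ to $X$ via the $\eta$-approximation. The paper phrases the last step more compactly---it asserts that for large enough $N$ the entire $km$-element plegma family $(x^{(i)}_{s^{(i)}_j})_{i,j}$ is already $(1+\vp)$-suppression unconditional, and then simply notes that $(x^{(i_j)}_{l_j})_{j=1}^m$ is a subfamily---whereas you verify the inequality directly for the $m$ vectors in question; your explicit ``padding trick'' with $M-m$ dummy columns is precisely what justifies the paper's choice of a large $N$ in Definition~\ref{D:3.9}. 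One small bookkeeping point: to guarantee $\min(s_1)\geq M$ you also need a few elements of $N_0$ in $[M,l_1)$ to fill the slots $s_1^{(1)},\ldots,s_1^{(i_1-1)}$, so $\min\L$ should exceed the $(M+k)$-th element of $N_0$ rather than just $M$; this is the analogue of the paper's condition that $\L$ consist of multiples of $2k$ above $N+k$.
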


\begin{proof}
As explained above, we may assume after passing to a subarray that  $(x_j^{(i)}:1\kleq i\kleq k, j\kin\N)$ generates a joint spreading model  $(e_j^{(i)}:1\kleq i\kleq k, j\kin\N)$ that is 1-suppression unconditional. Thus, we find $N\in\N$, so that for any plegma $(s_i)_{i=1}^k$, $s_i=(s^{(i)}_1, s^{(i)}_2,\ldots, s^{(i)}_m)$, for $i=1,2,\ldots, k$, with $N\leq s^{(1)}_1$ the family $\big(x^{(i)}_{s^{(i)}_j}: 1\kleq i\kleq k, 1\kleq j\kleq m\big)$ is $(1+\vp)$-suppression unconditional. Let $\L$ be the set that consists of all positive integers multiple of $2k$ that are greater than $N+k$.

Let now $i_1,\ldots,i_m$ in $\{1,\ldots,k\}$ and $l_1,\ldots,l_m$ be pairwise different elements of $\L$.
 After reordering, we can assume $l_1\kle l_2\kle \ldots\kle l_m$. Let $r_1\kle r_2\kle \ldots\kle r_m$ be in $\N$ so that $l_j=2k r_j$. We will now define a plegma $(s_i)_{i=1}^k$,
 $s_i=(s^{(i)}_{j})_{j=1}^m$, as follows. First
   we define $s^{(i_j)}_j\keq l_j\keq2k r_j$, for $j\keq1,2,\ldots, m$. Then,
   since $l_{j+1}-l_{j}\kge 2k$, for every $j\keq1,\ldots m-1$ and $s_1^{(i_1)}> N+k$, we can find  natural numbers $s^{(i_j)}_j\kle s^{(i_j+1)}_j\kle s^{(i_j+2)}_j\kle\ldots s^{(k)}_j\kle s^{(1)}_{j+1}\kle\ldots\kle s^{(i_{j+1})}_{j+1}$, numbers
    $N \kle s^{(1)}_1\kle s^{(2)}_1\kle \ldots \kle s^{(i_1-1)}_1\kle s^{(i_1)}_1$ and numbers $s^{(i_m)}_m\kle s^{(i_m+1)}_{m}\kle  \ldots \kle s^{(k)}_m$, which means that
    the family $(s_i)_{i=1}^k$, with  $s_i=(s_j^{(i)})_{j=1}^m$, for $i=1,2,\ldots,k$ is a plegma. Thus $\big(x^{(i)}_{s_j^{(i)}}:i\keq1,2,\ldots,k,j\keq1,2,\ldots m\big)$ is $(1\kplus\vp)$-suppression unconditional
  and $(x^{(i_j)}_{l_j})_{j=1}^m$ is just a subsequence of it.
  \end{proof}

Having now established all the tools we needed we can proceed with the proof of:
 \begin{thm}\label{T:3.11}
Let $X$ be a Banach space and $\eb = (e_j)_{j\in\N}$ be an asymptotic model generated by a normalized weakly null array in $X$. Then, for any $k\in\N$ and $\vp>0$, the metric space $([\N]^k,d^{(k)}_{\eb})$ bi-Lipschitzly embeds into $X$ with distortion at most $(2+\vp)$.
\end{thm}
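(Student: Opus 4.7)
The plan is to build an explicit embedding $\phi\colon [\N]^k\to X$ from the normalized weakly null array $(x^{(i)}_j:i,j\in\N)$ that generates $\eb$ as an asymptotic model. The naive choice $\phi(\mb)=\sum_{i=1}^k x^{(i)}_{m_i}$ is close to what one wants, but it has a structural defect: for two sets $\mb,\nb\in[\N]^k$ the column indices appearing in $\phi(\mb)-\phi(\nb)$ may collide (a single integer can be both $m_j$ and $n_{j'}$ with $j\neq j'$), which obstructs a direct application of Lemma \ref{L:3.10}. I would therefore use a spaced version $\phi(\mb)=\sum_{i=1}^k x^{(i)}_{\tau(m_i,i)}$ with $\tau(m,i):=km+i-1$. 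This $\tau$ is injective; for the increasing enumeration $m_1<\cdots<m_k$ the columns $\tau(m_i,i)$ are strictly increasing in $i$; and, crucially, the $2|F|$ column indices $\{\tau(m_j,j),\tau(n_j,j):j\in F\}$ that appear in $\phi(\mb)-\phi(\nb)$ are pairwise distinct whenever $F=\{j:m_j\neq n_j\}$, since $|j'-j|<k$ forces $j=j'$ in any equation of the form $km_j+j=kn_{j'}+j'$.

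Before evaluating the embedding I would first pass to a subarray so that, for a prescribed small $\delta>0$, the asymptotic-model approximation
\[\Big|\Big\|\sum_{i=1}^k a_i x^{(i)}_{l_i}\Big\|-\Big\|\sum_{i=1}^k a_i e_i\Big\|\Big|<\delta\]
holds uniformly for all $|a_i|\leq 1$ and all $k\leq l_1<\cdots<l_k$ in the subarray; this uniform version of the iterated-limit definition is standard by a diagonal extraction. I would then invoke Lemma \ref{L:3.10} applied to the normalized weakly null array of height $k$ consisting of the first $k$ rows of this subarray, with parameter $m=2k$ and tolerance $\vp'>0$, to obtain an infinite set $\L'$ of columns such that any family $(x^{(i_s)}_{l_s})_{s=1}^{2k}$ with $i_s\in\{1,\ldots,k\}$ and pairwise distinct $l_s\in\L'$ is $(1+\vp')$-suppression unconditional. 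Finally I would pick an infinite $\L\subseteq\N$ with $\{\tau(m,i):m\in\L,\ 1\leq i\leq k\}\subseteq\L'$; since the canonical re-indexing $[\N]^k\to[\L]^k$ is an isometry for $d^{(k)}_{\eb}$, it is enough to embed $[\L]^k$.

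For $\mb,\nb\in[\L]^k$ and $F=\{j:m_j\neq n_j\}$ I would write
\[\phi(\mb)-\phi(\nb)=\sum_{j\in F}x^{(j)}_{\tau(m_j,j)}-\sum_{j\in F}x^{(j)}_{\tau(n_j,j)}.\]
The upper estimate follows from the triangle inequality and two applications of the asymptotic-model approximation with $a_i=\car_{i\in F}$, giving $\|\phi(\mb)-\phi(\nb)\|\leq 2\bigl(\|\sum_{j\in F}e_j\|+\delta\bigr)$. For the lower estimate the $2|F|$ columns lie in $\L'$ and are pairwise distinct by design, so Lemma \ref{L:3.10} gives
\[\|\phi(\mb)-\phi(\nb)\|\geq \frac{1}{1+\vp'}\Big\|\sum_{j\in F}x^{(j)}_{\tau(m_j,j)}\Big\|\geq \frac{1}{1+\vp'}\Big(\Big\|\sum_{j\in F}e_j\Big\|-\delta\Big).\]
Because $\eb$ is $1$-suppression unconditional and normalized, $\|\sum_{j\in F}e_j\|\geq 1$ whenever $F\neq\emptyset$, so the additive $\delta$ is absorbed multiplicatively and the total distortion is at most $2(1+\delta)^2(1+\vp')/(1-\delta)$, which is bounded by $2+\vp$ once $\delta$ and $\vp'$ are chosen small enough.

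The principal technical hurdle is the column-collision issue that blocks a direct use of Lemma \ref{L:3.10}; the spacing map $\tau$ is the only non-mechanical ingredient and is precisely what allows the suppression-unconditionality lemma to be applied to the combined $2|F|$-term expression. Everything else reduces to a careful balancing of the two controlled tolerances: $\delta$ measures how well the array mimics the asymptotic model on $k$-tuples, and $\vp'$ measures the suppression-unconditionality of arbitrary $2k$-tuples drawn from the first $k$ rows; both can be driven to zero independently by choosing the subarray and the column set $\L'$ appropriately.
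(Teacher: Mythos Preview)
Your approach is essentially identical to the paper's: the same spacing trick (the paper uses $km_i+i$, you use $km_i+i-1$), the same application of Lemma~\ref{L:3.10} with $m=2k$ for the lower bound, and the same asymptotic-model approximation for both bounds. There is, however, one small technical slip. After obtaining $\L'$ from Lemma~\ref{L:3.10} you propose to ``pick an infinite $\L\subseteq\N$ with $\{\tau(m,i):m\in\L,\ 1\leq i\leq k\}\subseteq\L'$,'' but this requires $\L'$ to contain infinitely many blocks of $k$ consecutive integers, which an arbitrary infinite $\L'$ need not have---and indeed the set produced in the proof of Lemma~\ref{L:3.10} consists only of multiples of $2k$, so for $k\ge 2$ no such $\L$ exists. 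The fix is what the paper does implicitly: rather than selecting $\L$ inside $\L'$, simply \emph{relabel} the array by enumerating $\L'=\{\ell_1<\ell_2<\cdots\}$ and replacing $x^{(i)}_j$ by $x^{(i)}_{\ell_j}$; the asymptotic-model approximation survives this further passage to a subarray, and the suppression-unconditionality then holds for all pairwise distinct column indices in $\N$, after which your map $\phi$ can be defined directly on $[\N]^k$. With that adjustment your argument goes through verbatim.
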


\begin{proof}
Let $\big(x^{(i)}_j):i,j\kin\N\big)$ be a normalized weakly null array in $X$ that generates an asymptotic model $\eb=(e_j)_{j\in\N}$. Fixing $k\in\N$ and $\delta>0$ and passing to appropriate subsequences of the array we may assume that for any $j_1<\cdots<j_k$ and any $a_1,\ldots,a_k$ in $[-1,1]$ we have
\begin{equation}
\label{E:4.4.1}
\Bigg|\Big\|\sum_{i=1}^ka_ix^{(i)}_{j_i}\Big\| - \Big\|\sum_{i=1}^ka_ie_i\Big\|\Bigg| < \delta.
\end{equation}
In addition,  by applying Lemma \ref{L:3.10} we may also assume that for any $i_1,\ldots,i_{2k}$ in $\{1,\ldots,k\}$ and any pairwise different $l_1,\ldots,l_{2k}$ in $\N$ the sequence $(x^{(i_j)}_{l_j})_{j=1}^{2k}$ is $(1+\delta)$-suppression unconditional.

We are now ready to define the embedding. Define $\phi:[\N]^k\to X$ as follows. If $\mb = \{m_1,m_2,\ldots,m_k\}$ set
\[\phi(\mb) = \sum_{i=1}^kx^{(i)}_{km_i+i}.\]
Observe first that for $m_1 <\cdots<m_k$ we have $km_1 + 1< km_2+2<\cdots<km_k+k$. Then, if $\mb = \{m_1,\ldots,m_k\}$,
$\nb = \{n_1,\ldots,n_k\}$ and $F = \{i:m_i\neq n_i\}$ we have
$$\phi(\mb) - \phi(\nb) = \sum_{i\in F}x^{(i)}_{km_i+i} - \sum_{i\in F}x^{(i)}_{kn_i+i}.$$
It immediately follows from the triangle inequality and \eqref{E:4.4.1} that if $\mb\neq \nb$ then
$$\|\phi(\mb) - \phi(\nb)\| \leq 2\|\sum_{i\in F}e_i\| + 2\delta \leq 2(1+\delta)d_{\eb}^{(k)}(\mb,\nb).$$

Also, note that $km_i + i = kn_{i'} + i'$ if and only if $i = i'$ and $m_i = n_{i'}$. We deduce that the sequence $(x^{(i)}_{km_i+i})_{i\in F}\cup(x^{(i)}_{kn_i+i})_{i\in F}$ is $(1+\delta)$-suppression unconditional. Therefore we have
\[
\|\phi(\mb) - \phi(\nb)\| \geq \frac{1}{(1+\delta)}\Big\|\sum_{i\in F} x^{(i)}_{km_i+i}\Big\| \geq \frac{1}{(1+\delta)}\Big(\Big\|\sum_{i\in F}e_i\Big\| - \delta\Big) \geq \frac{(1-\delta)}{(1+\delta)}d_{\eb}^{(k)}(\mb,\nb).
\]
Hence, the distortion of $\phi$ is at most $2(1+\delta)^2/(1-\delta)$. For a given $\vp>0$, we choose $\delta>0$ small enough,  and then deduce the result.
\end{proof}

As we observed earlier Theorem \ref{T:3.11} implies the remaining implication of Theorem \ref{T:3.1} as well as the ``moreover'' part  via Theorem \ref{T:3.5}.

\medskip
At the end of this section we would like to address  the question  whether or not  in the class of reflexive spaces, the property of not  being asymptotic $c_0$
could be characterized  by the  uniform   Lipschitz embedability of $([\N]^k, \bar e)$, $k\in\N$, for some $\bar e$, where $\bar e$ only comes out of a countable subset of $ \mathsf{HU}$. This is not the case as the following Proposition shows.

\begin{prop} \label{P:3.10}
Let
$$D\subset \left\{ (d^{(k)})_{k\in\N}: \begin{matrix}d^{(k)} \text{ is a metric on $[\N]^k$, which is dominated by $d_H^{(k)}$ and }\\ \limsup_{k\to\infty}\inf_{\M\in[\N]^\omega} \diam([\M]^k,d^{(k)}) =\infty\end{matrix}\right\}$$
 be countable.
Then there exists a  reflexive Banach space $X$,  which is not asymptotic $c_0$, so that for all $ (d^{(k)})_{k\in\N}\in D$ and
for all sequences $(\Psi_k)$, where $\Psi_k: ([\N]^k, d^{(k)} )\to X$ is $1$-Lipschitz it follows that
$$\lim_{k\to\infty} \inf_{\M\in[\N]^{\omega}} \frac{\diam (\Psi_k([\M]^k, d^{(k)} ))}{\diam ([\M]^k, d^{(k)})}=0,$$
in  particular the $\Psi_k$ cannot be uniform bi-Lipschitz embeddings.
\end{prop}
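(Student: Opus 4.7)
My plan is to construct the required $X$ in three stages, driven by a diagonal choice of ``concentration rate''. First, set $D_k^{(m)}:=\inf_{\M}\diam([\M]^k,d_m^{(k)})$, which by hypothesis is unbounded in $k$ for each $m$. Since $D$ is countable, a standard diagonal argument furnishes a single nondecreasing function $g:\N\to(0,\infty)$ with $g(k)\to\infty$ and $g(k)=o(D_k^{(m)})$ along the subsequence $(k_i^{(m)})$ witnessing $D_{k_i^{(m)}}^{(m)}\to\infty$, for every $m\in\N$. The target space will be designed so that Hamming-Lipschitz maps into it concentrate roughly at rate $g(k)$, which is intermediate between the $O(1)$ rate forced by asymptotic-$c_0$ and the worst case.

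The candidate is $X=\big(\bigoplus_{n=1}^\infty T^*\big)_E$, where $E$ is a reflexive Banach space with a normalized $1$-suppression unconditional shrinking and boundedly complete basis $(u_n)$ satisfying $\|\sum_{n=1}^k u_n\|_E\asymp g(k)$. Such an $E$ is constructible by a Tsirelson-type or Orlicz-type construction adapted to the prescribed growth. Reflexivity of $X$ follows from reflexivity of each $T^*$ and of $E$ together with the shrinking/boundedly-complete basis. Failure of asymptotic-$c_0$ is obtained exactly as in Lemma~\ref{lem:2.8}: a countably branching weakly null tree picked diagonally across components realizes $(u_{k_i})$ on each branch, and $(u_n)$ is not equivalent to the $c_0$ unit vector basis since $g(k)\to\infty$.

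To verify the concentration, fix $m$ and a $1$-Lipschitz map $\Psi_k:([\N]^k,d_m^{(k)})\to X$. Since $d_m^{(k)}\le d_H^{(k)}$, each component map $\Psi_{k,n}:=P_n\circ\Psi_k:[\N]^k\to T^*$ is $1$-Lipschitz with respect to the Hamming metric. Applying Theorem~\ref{thm} inside $T^*$ supplies, for each $n$, an infinite $\M_{k,n}\subset\N$ with $\diam\Psi_{k,n}([\M_{k,n}]^k)\le C$, and a nested diagonal selection gives an infinite $\M_k$ on which this estimate holds simultaneously for all $n\le N_k$, where $N_k$ is chosen to grow slowly enough that $g(N_k)=o(D_k^{(m)})$. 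Using the $E$-sum structure, the ``head'' contribution to $\|\Psi_k(\mb)-\Psi_k(\nb)\|_X$ is at most $C\|\sum_{n=1}^{N_k}u_n\|_E=O(g(N_k))=o(D_k^{(m)})$.

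The main obstacle is controlling the ``tail'' $n>N_k$: the naive $1$-Lipschitz bound gives only $O(D_k^{(m)})$ for that contribution, which is not $o(D_k^{(m)})$. To make the tail genuinely negligible, I would refine $\M_k$ by a Ramsey-type stabilization in the spirit of the joint spreading model machinery of Section~\ref{sec:3.2} (cf.\ Lemma~\ref{L:3.10}), using reflexivity of $X$ and compactness in $E$ to force $P_{>N_k}\Psi_k(\mb)$ to be essentially constant in $\mb$ on the refined index set. Combined with the head bound, this yields $\diam\Psi_k([\M]^k)=o(D_k^{(m)})$, which in particular rules out any uniform bi-Lipschitz embedding of the $([\N]^k,d_m^{(k)})$ into $X$.
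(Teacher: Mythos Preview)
The paper takes a quite different route that sidesteps the obstacle you identify. It sets $X=Z_f^*$, the dual of a Schlumprecht-type space built from a single slowly growing function $f$ chosen so that $f(k)\to\infty$, $f(k)/f_n(k)\to 0$ for every $n$, and $f(k)/k^{1/n}\to 0$ for every $n$ (the last condition is what rules out $\ell_1$ in $Z_f$ and secures reflexivity). The decisive structural property is that \emph{every} normalized block sequence $(x^*_j)_{j=1}^k$ in $Z_f^*$ satisfies $\big\|\sum_j x^*_j\big\|\le f(k)$. Combined with the tree-splitting result \cite[Proposition~4.1]{BaudierLancienSchlumprecht2018}, which yields an $\M$ and a fixed vector $y$ with $\Psi_k(\bar m)\approx y+\sum_{j=1}^k y^{(j)}_{\bar m}$ for some block sequence $(y^{(j)}_{\bar m})_j\subset B_{Z_f^*}$, one gets $\diam\Psi_k([\M]^k)\le 2\varepsilon+2f(k)$ in a single stroke; no head/tail decomposition is needed.

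Your tail argument has a genuine gap. The cutoff $N_k$ is fixed before $\Psi_k$ is revealed, so nothing prevents $\Psi_k$ from living entirely in components beyond $N_k$: for instance $\Psi_k(\bar m)=\tfrac12\sum_{i=1}^k e^{(N_k+i)}_{m_i}$ is $1$-Lipschitz for $d_{\ham}^{(k)}$ and satisfies $P_{>N_k}\Psi_k=\Psi_k$, which is not ``essentially constant'' on any $[\M]^k$. Lemma~\ref{L:3.10} and the joint-spreading-model machinery establish suppression-unconditionality for a \emph{given} array; they provide no mechanism to stabilize the tail of an \emph{arbitrary} Lipschitz map, and ``compactness in $E$'' is unavailable since $\Psi_k([\N]^k)$ need not be relatively compact. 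What your approach would actually require is a uniform bound of order $g(k)$ on sums of $2k$ blocks in $(\oplus T^*)_E$ --- precisely the feature that $Z_f^*$ enjoys by construction --- and your head/tail split does not substitute for it.
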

\begin{proof}
Let $D=\big \{(d^{(k)}_n:k\in\N): n\in\N\big\}$ and for $n\in\N$, put
$f_n(k)=\inf_{\M\in [\N]^{<\omega}}\diam([\M]^k,d_n^{(k)}).$
For each $n$ there exists a $k_n$ so that
$$\min_{m\le n} f_m(k)\ge n\text{ for all $k\ge k_n$.}$$
We put  $\tilde f(k)=1 $ if $k<k_1$, and
   $\tilde f(k)= \min_{m\le n} f_m(k)\ge n $ whenever $k_n\le k<k_{n+1}$.
Then put
$$f(k)=\max\big(2,\min\big({\tilde f^{1/2}(k)}, \log_2(1+k)\big)\big).$$
It follows that
\begin{equation}\label{E:1.6}
\lim_{k\to\infty} f(k)=\infty,\quad
\lim_{k\to\infty} \frac{f(k)}{f_n(k)}=0 \text{ and }\lim_{k\to\infty} \frac{f(k)}{k^{1/n}}=0, \text{ for all $n\in\N$}.
\end{equation}
The space $X$ will be the dual of the space $Z=Z_f$, which was constructed in \cite{Schlumprecht1991}.
Although $f$ does not satisfy all the conditions demanded in the construction there, for our purposes the properties in
 \eqref{E:1.6} suffice. By \cite[Proposition 2]{Schlumprecht1991}, there is a Banach space  $Z$ with a $1$-subsymmetric basis $(e_i)$, whose norm satisfies
 the following implicit equation:
 \begin{align}\label{E:1.7}
 \|x\|=&\max\big(\|x\|_\infty,\sup_{2\le l\le \infty} \|x\|_l\big), \text{ where }\\
 & \|x\|_l=\frac1{f(l)}  \max_{E_1<E_2<\ldots E_l} \sum_{j=1}^l\|E_j(x)\|, \text{ for $l\ge 2$, and $x\in X$}.\notag
 \end{align}
It is clear that, by \eqref{E:1.6} and  \eqref{E:1.7},  $Z$ does not contain $c_0$.
 We will show that $Z$ does also not contain a copy of
  $\ell_1$. This fact follows from the arguments in \cite{Schlumprecht1991} (more precisely the arguments on  page 87), but for the sake of better readability
  let us give a self contained  proof.
 Assume $Z$ contained a normalized block sequence $(x_n)$ which is equivalent to the $\ell_1$ unit basis.
By James's Theorem \cite{James1964} we can  assume it  is  $(1+\vp)$-equivalent to the $\ell_1$ unit basis, for some given $\vp>0$.
 It follows for any $l\in \N$ and any $A\subset \N$,  with $|A|\geq l/\vp$,  that there are finite sets $E_1<E_2<\ldots E_l$,
  so that (letting $m_1=1$, and $m_j=\max\{n: \supp(x_{j-1})\cap E_n\not=\emptyset\}$, if $1<j\le l+1$)
\begin{equation}\label{E:1.8}
\Big\|\frac1{|A|} \sum_{j\in A} x_j\Big\|_l =\frac1{ f(l)} \frac1{|A|}\sum_{j=1}^l \|E_j(x)\| \le
\frac1{ f(l)} \sum_{j=1}^l \Big\|\sum_{i=m_j}^{m_{j+1}} x_i\Big\|
   \le \frac1{f(l)} \frac{|A|+l}{|A|}  \le \vp +\frac1{f(l)}.\end{equation}
Secondly we choose a {\em rapidly increasing sequence of $\ell_1$-averages of length $2$}, a name coined by Gowers and Maurey
\cite{gowersmaurey1997}. By this we mean that we first choose $l_1\in \N$ so that $1/f(l)<\vp$, for all $l\ge l_1$, then we choose $n_1\ge  l_1/\vp$ and
$z_1=\frac1{n_1}\sum_{j=1}^{n_1} x_j$. Then we choose $l_2\in \N$ so that  $\max\supp(z_1) <\vp f(l)$, for $l\ge l_2$,
 $n_2\ge l_2/\vp$  and then $z_2=\frac{1} {n_2}\sum_{j=n_1+1}^{n_1+n_2} x_j$.

 It follows from \eqref{E:1.8}   for some $l\ge 2$ that
 $$\|z_1+z_2\|=\|z_1+z_2\|_l\le \|z_1\|_l+\|z_2\|_l  \le   \begin{cases}    2\vp+  2\frac1{f(l)}\le 2\vp + 1  &\text{ if $2\le l\le l_1$,}\\
                                                                                                           1+ \vp +\frac1{f(l)} \le 1+2\vp &\text{ if $l_1 <l\le l_2$,}\\
                                                                                                           \vp +1 &\text{ if $l_2 <l$.}\\
                                                                                                          \end{cases}$$
  But this contradicts the assumption that $(x_j)$ is $(1+\vp)$-equivalent to the unit vector basis of $\ell_1$ if $\vp>0$ is chosen small enough.
Since $(e_j)$ is an unconditional basis it follows from the fact that $Z$ does neither contain $c_0$ nor $\ell_1$, that
  $Z$ is reflexive \cite{James1950}.  Since $(e_j)$ is subsymetric $Z$ cannot even  be asymptotic $\ell_1$.
   It follows that $(e^*_n)$ is a $1$-subsymmetric  basis of $Z^*$  and
   by a straightforward dualization argument  \cite{MaureyMilmanTomczak1995}*{Theorem 4.3} $Z^*$ is not asymptotically $c_0$.
  From \eqref{E:1.7}  it follows that  for any normalized  block
basis $(x^*_j)_{j=1}^n$ in $B_{Z^*}$ we have for an appropriate $x\in S_Z$, and letting $E_j=\supp(x^*_j)$ for $j=1,2\ldots, n$
\begin{equation}\label{E:1.3}
\Big\|\sum_{j=1}^n x^*_j\Big\|=\sum_{j=1}^n x^*_j(x)\le \sum_{j=1}^n \|E_j( x)\|\le f(n).
 \end{equation}
Assume now that $n,k\in\N$ and that $\Psi:([\N]^k, d_n^{(k)}) \to Z^*$ is $1$-Lipschitz, and let $\vp>0$.
By  \cite[Proposition 4.1]{BaudierLancienSchlumprecht2018}  there is an $\M'\in[\M]^\omega$ and a $y\in Z^*$ and for all $\bar m\in [\M']^k$ there is a block sequence  $(y^{(j)}_{\bar m})_{j=1}^k\subset B_{Z^*}$ so that
$$\Big\|\psi(\bar m)-y-\sum_{j=1}^k y^{(j)}_{\bar m} \Big\|\le \vp \text{ for all $\bar m\in [\M']^k$.}$$
Thus
$$\|\Psi(\bar m)-\Psi(\bar n)\|\leq 2\vp+ \| y^{(1)}_{\bar m}+  y^{(2)}_{\bar m} + \ldots y^{(k)}_{\bar m} -y^{(1)}_{\bar n}+  y^{(2)}_{\bar n} + \ldots y^{(k)}_{\bar n}\|\le 2\vp + 2f(k).$$
which by the second property in \eqref{E:1.6} proves our claim.
\end{proof}

\section{Embeddability of Hamming graphs into non asymptotic-$\co$ spaces}
\label{sec:4}
In this section we discuss coarse embeddability of the Hamming graphs into non asymptotic-$\co$ spaces. Notably, we show that $T^*(T^*)$ is reflexive non-asymptotic-$\co$ space in which the Hamming graphs cannot be coarsely embedded in certain canonical ways.

\subsection{Embeddability into $\big(\oplus_{n=1}^\infty\ell_p^n(T^*))_{T^*}$}
\label{sec:4.1}
For $p\in[1,\infty]$, the space $\big(\oplus_{n=1}^\infty\ell_p^n(T^*))_{T^*}$ is separable and reflexive  but not asymptotically-$c_0$, yet all its spreading models are uniformly equivalent to the unit vector basis of $c_0$. More precisely, we have.

\begin{prop}\label{P:7.4} Let $p\in[1,\infty]$.
Every spreading model generated by a normalized weakly null sequence in $(\oplus_{n=1}^\infty\ell_p^n(T^*))_{T^*}$ is 6-equivalent to the unit vector basis of $c_0$.
\end{prop}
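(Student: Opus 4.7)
The plan is to apply Proposition \ref{P:3.12} with $p=\infty$, outer space $U=T^*$ endowed with its canonical $1$-unconditional basis, and inner spaces $X_n=\ell_p^n(T^*)$ for each $n\in\N$. Once the two hypotheses of the proposition are verified, the $c_0$-equivalence constant will come out as the product $A\cdot B\cdot D$ on the upper side, matched by an automatic lower estimate coming from $1$-suppression unconditionality of any spreading model.

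For the inner spreading models, a normalized weakly null sequence $(y_j)$ in $X_n=\ell_p^n(T^*)$ has coordinates $(y_j^{(i)})_j$ that are weakly null in $T^*$. By the classical duality estimate for $T^*$, which traces back to the factor $\tfrac12$ in the Figiel-Johnson formula \eqref{E:6.1}, each coordinate spreading model in $T^*$ is $c_0$-equivalent with lower constant $1$ and upper constant $2$. Passing to a common subsequence and combining the coordinates through the $\ell_p^n$-norm, with normalization $(\sum_i c_i^p)^{1/p}=1$ where $c_i=\lim_j\|y_j^{(i)}\|_{T^*}$ (replaced by $\max_i c_i=1$ when $p=\infty$), I would conclude that the spreading model in $\ell_p^n(T^*)$ is $c_0$-equivalent with lower $\tfrac1C=1$ and upper $D=2$.

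For condition ($*$) in $U=T^*$, given $x_0\in S_{T^*}$ finitely supported with $\max\supp x_0=N$, a normalized block sequence $(x_n)$ in $T^*$, and $k\in\N$, I would choose $n_1<\cdots<n_k$ with $\min\supp x_{n_1}\ge\max(k+1,N+1)$. Then $(x_{n_1},\ldots,x_{n_k})$ is admissible in $T^*$, so the duality bound $\|\sum a_i x_{n_i}\|_{T^*}\le 2\max_i|a_i|$ applies; combining with the triangle inequality yields $\|a_0 x_0+\sum a_i x_{n_i}\|_{T^*}\le |a_0|+2\max_i|a_i|\le 3\max_j|a_j|$. Thus ($*$) holds with $\tfrac1A=1$ (by $1$-unconditionality of the basis) and $B=3$.

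Plugging $A=C=1$, $B=3$, $D=2$ into the $p=\infty$ case of Proposition \ref{P:3.12}, the upper estimate $ABD=6$ propagates to any spreading model $(e_j)$ of a normalized weakly null sequence in $(\oplus_{n=1}^\infty\ell_p^n(T^*))_{T^*}$, giving $\|\sum a_j e_j\|\le 6\max_j|a_j|$. The matching lower bound $\max_j|a_j|\le\|\sum a_j e_j\|$ is automatic, because any such spreading model is $1$-suppression unconditional with normalized basis (Proposition \ref{P:3.5}), and together these yield the claimed $6$-equivalence to the unit vector basis of $c_0$. The main delicate step will be carrying out the $p=\infty$ version of the proof of Proposition \ref{P:3.12} in parallel with the written argument for $p<\infty$: one must verify that replacing each $\ell_p$-sum in the key displays by a maximum does not inflate the upper constant beyond $ABD=6$.
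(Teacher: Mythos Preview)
Your proposal is correct and uses the same main tool (Proposition~\ref{P:3.12} with $p=\infty$) as the paper, with the same constants $A=C=1$, $B=3$, $D=2$. The only difference is the choice of decomposition: you take $U=T^*$ and $X_n=\ell_p^n(T^*)$, whereas the paper first invokes the canonical isometry $\big(\oplus_{n=1}^\infty\ell_p^n(T^*)\big)_{T^*}\cong V(T^*)$ with $V=\big(\oplus_{n=1}^\infty\ell_p^n\big)_{T^*}$, and then applies Proposition~\ref{P:3.12} with outer space $U=V$ and all inner spaces equal to $T^*$. The paper's choice makes the inner hypothesis immediate (spreading models of normalized weakly null sequences in $T^*$ are $2$-dominated by $c_0$ via \eqref{E:6.1a}) and reduces condition~($*$) to the observation that any normalized block sequence in $V$ is, after a shift, isometrically a block sequence in $T^*$; your choice makes condition~($*$) in $U=T^*$ trivial but requires the coordinate-by-coordinate computation you sketch to handle spreading models in $\ell_p^n(T^*)$. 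Both routes are equally valid and yield the same constant $6$.
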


\begin{proof}
Every  normalized block basis $(x_n)$  in $V = (\oplus_{n=1}^\infty\ell_p^n)_{T^*}$ has  a subsequence which isometrically equivalent to a  $(x_n)$ in $T^*$ and thus  has a spreading model equivalent to the $c_0$-unit basis with lower bound $1$ and upper bound $2$, and  therefore for any finitely supported vector $x_0$
and any $k$ there are $n_1<n_2\cdots<n_k$ so that $\{x_0\}\cup \{x_{n_j}, j=1,2,\ldots k\}$  is equivalent to the $\ell_\infty^{n+1}$ basis, with lower
bound $1$ and upper bound $3$. Since $V(T^*)$ is canonically isometric to $\big(\oplus_{n=1}^\infty\ell_p^n(T^*))_{T^*}$, our claim follows from Proposition \ref{P:3.12}.
\end{proof}

It turns out that despite all its spreading models generated by a normalized weakly null sequence are 6-equivalent to the unit vector basis of $c_0$, the space $(\oplus_{k=1}^\infty\ell_p^k(T^*))_{T^*}$ contains equi-coarsely the Hamming graphs.

\begin{prop}\label{P:7.3}
Let $1\leq p<\infty$. The Hamming graphs embed equi-coarsely into the Banach space $(\oplus_{k=1}^\infty\ell_p^k(T^*))_{T^*}$.
\end{prop}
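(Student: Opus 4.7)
The plan is to exhibit, for every $k\in\N$, an explicit map $\phi_k\colon ([\N]^k,d^{(k)}_{\ham})\to(\oplus_{n=1}^\infty\ell_p^n(T^*))_{T^*}$ that factors through the $k$-th summand $\ell_p^k(T^*)$, and to show that the moduli of its lower and upper Lipschitz type estimates can be chosen independently of $k$. Since for every $n\in\N$ the natural inclusion $\ell_p^n(T^*)\hookrightarrow(\oplus_{n=1}^\infty\ell_p^n(T^*))_{T^*}$ as the $n$-th coordinate is isometric (because $\|e_n\|_{T^*}=1$ for the unit vector basis $(e_n)$ of $T^*$), it suffices to embed each Hamming graph $\ham_k^\omega$ into $\ell_p^k(T^*)$ with estimates uniform in $k$.

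Let $(e_j)_{j\in\N}$ denote the unit vector basis of $T^*$. For each $k\in\N$ and each $i=1,\ldots,k$ denote by $\iota_i^{(k)}\colon T^*\to\ell_p^k(T^*)$ the isometric embedding onto the $i$-th coordinate. Define
\[
\phi_k(\mb)=\sum_{i=1}^k \iota_i^{(k)}(e_{m_i}), \qquad \mb=\{m_1<\cdots<m_k\}\in[\N]^k.
\]
For $\mb,\nb\in[\N]^k$, with $F=\{i:m_i\neq n_i\}$, the difference $\phi_k(\mb)-\phi_k(\nb)$ has $i$-th coordinate (viewed in $T^*$) equal to $e_{m_i}-e_{n_i}$ for $i\in F$ and zero otherwise, so the $\ell_p$-sum structure yields
\[
\|\phi_k(\mb)-\phi_k(\nb)\|_{\ell_p^k(T^*)}=\Big(\sum_{i\in F}\|e_{m_i}-e_{n_i}\|_{T^*}^p\Big)^{1/p}.
\]
Since $(e_j)$ is $1$-unconditional and normalized, monotonicity gives $1\leq\|e_m-e_n\|_{T^*}\leq 2$ whenever $m\neq n$. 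Therefore
\[
|F|^{1/p}\leq \|\phi_k(\mb)-\phi_k(\nb)\| \leq 2|F|^{1/p},
\]
i.e.\ $\rho(d)\leq \|\phi_k(\mb)-\phi_k(\nb)\|\leq \omega(d)$ with $d=d^{(k)}_{\ham}(\mb,\nb)$, $\rho(t)=t^{1/p}$ and $\omega(t)=2t^{1/p}$. Both functions are independent of $k$ and $\rho(t)\to\infty$, which gives the desired equi-coarse embedding.

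The argument is essentially structural, so no step poses a real obstacle; the only point requiring care is the lower estimate $\|e_m-e_n\|_{T^*}\geq 1$, which follows from monotonicity of the basis (a projection onto a single initial basis vector has norm one). It is worth noting that the argument breaks down precisely at $p=\infty$: the analogous computation in $c_0^k(T^*)$ would give $\|\phi_k(\mb)-\phi_k(\nb)\|\leq 2$ for all $\mb,\nb$, so the uniform lower modulus would fail to tend to infinity. This explains the restriction $p<\infty$ in the statement and is consistent with the more delicate analysis the authors reserve, in Section~\ref{sec:4.2}, for the case $T^*(T^*)$, where the outer $T^*$-structure itself is responsible for the constraint on coarse embeddability.
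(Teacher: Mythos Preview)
Your proof is correct and follows essentially the same approach as the paper: both define the identical map $\mb\mapsto\sum_{i=1}^k e^{(i)}_{m_i}$ into $\ell_p^k(T^*)$ and obtain the same moduli $\rho(t)=t^{1/p}$, $\omega(t)=2t^{1/p}$. The only cosmetic difference is that the paper phrases the lower bound via the $1$-unconditionality of the whole family $(e^{(i)}_j)$ together with the fact that any cross-section $(e^{(i)}_{j_i})_{i=1}^k$ is isometrically $\ell_p^k$, whereas you compute coordinatewise in the $\ell_p$-sum and reduce to $1\le\|e_m-e_n\|_{T^*}\le 2$; both arguments are equivalent.
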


\begin{proof}
Consider for every $n\in\mathbb{N}$ the space $\ell_p^k(T^*)$ and let $(e_j^{(i)})_j$ denote the standard basis of the $i$'th copy of $T^*$. Then, for any $j_1<\cdots<j_k$ the sequence $(e^{(i)}_{j_i})_{i=1}^k$ is isometrically equivalent to the unit vector basis of $\ell_p^k$. Additionally, the collection $(e_j^{(i)}: j\in\mathbb{N}, 1\leq i\leq k\}$ is 1-unconditional. We conclude that if we define the map $f_k:[\mathbb{N}]^k\to\ell_p^k(T^*)$ with $f_k({\mb}) = \sum_{i=1}^k e^{(i)}_{m_i}$, where ${\mb} = \{m_1,\ldots,m_k\}$, then for all ${\mb}, \nb\in[\mathbb{N}]^k$ we have
\[d^{(k)}_\ham(\mb,\nb)^{1/p} \leq \|f_k(\mb) - f_k(\nb)\|  \leq 2 d^{(k)}_\ham(\mb,\nb)^{1/p}.\]
We now deduce that the Hamming graphs equi-coarsely embed into the space $(\oplus_{k=1}^\infty\ell_p^k(T^*))_{T^*}$ with compression modulus $\rho(t) = t^{1/p}$ and expansion modulus $\omega(t) = 2t^{1/p}$.
\end{proof}

The proof actually gives that the $\frac{1}{p}$-snowflaking of the $k$-dimensional Hamming graph, \ie $([\N]^k,d_\ham^{1/p})$, bi-Lipschitzly embeds into $\ell_p^k(T^*)$ with distortion at most $2$. In particular, the Hamming graphs equi-bi-Lipschitzly embed into $(\oplus_{k=1}^\infty\ell_1^k(T^*))_{T^*}$.

\begin{remark}\label{R:7.5} For $k\in \N$, the Johnson graph of height $k$ is the set $[\N]^k$ equipped with the  metric defined by $d_{\mathsf{J}}^{(k)}(\mb,\nb)=\frac12 \sharp(\mb \triangle \nb)$ for $\mb,\nb \in [\N]^k$. It is proved in \cite{BaudierLancienSchlumprecht2018} that there is a constant $C\ge 1$ such that for any $k\in \N$ and $f \colon ([\N]^k,d_{\mathsf{J}}^{(k)}) \to T^*$ Lipschitz, there exists $\M \in [\N]^\omega$ so that $\diam (f([\M]^k))\le C\Lip (f)$. It is easily seen that the same is true if $T^*$ is replaced by any reflexive asymptotic-$c_0$ space. However, we do not know whether the Johnson graphs embed equi-coarsely into $(\oplus_{n=1}^\infty\ell_p^n(T^*))_{T^*}$. The reason is that canonical embeddings of the Johnson graphs are built on sequences and not arrays. This confirms the qualitative difference between asymptotic models and spreading models. The space $\big(\oplus_{n=1}^\infty\ell_p^n(T^*)\big)_{T^*}$ is a possible example of a space that equi-coarsely contains the Hamming graphs but not the Johnson graphs.
\end{remark}

\begin{prob}\label{P:7.5}
Does there exist a Banach space equi-coarsely containing the Hamming graphs and not the Johnson graphs? Is $(\oplus_{n=1}^\infty\ell_p^n(T^*))_{T^*}$ such an example?
\end{prob}

\subsection{Embeddability into $T^*(T^*)$}
\label{sec:4.2}
We now introduce and study a relaxation of the asymptotic-$\co$ property that is relevant to the coarse geometry of the Hamming graphs.
\subsubsection{A partial obstruction: the asymptotic-subsequential-$\co$ property}
We denote the unit vector basis of $T^*$ by $(e^*_j) $, which is also $1$-unconditional. Therefore the space $T^*(T^*) = (\oplus_{k=1}^\infty T^*)_{T^*}$ is well defined. We study the asymptotic properties of this space and the goal is to prove that the space $T^*(T^*)$, which is not an asymptotic-$c_0$ space by Lemma \ref{lem:2.8}, is very close to being one. We introduce the following definition.

\begin{defin}
Let $X$ be an infinite dimensional Banach space and $1\leq p\leq\infty$. We say that $X$ is an {\em asymptotic-subsequential-$\ell_p$} space if there exists a constant $C\geq 1$ so that for all $n\in\mathbb{N}$ there exists an $N\in\mathbb{N}$ satisfying the following: whenever $(e_i)_{i=1}^N$ is in $\{X\}_N$ (recall Definition \ref{D:3.1}) then there are $i_1<\cdots<i_n$ so that $(e_{i_k})_{k=1}^n$ is $C$-equivalent to the unit vector basis of $\ell_p^n$.
\end{defin}
Clearly, any asymptotic-$\ell_p$ space fits the above description. To follow our previously introduced  convention, we shall use the term asymptotic-subsequential-$c_0$ space for the case $p=\infty$. We do not know whether such spaces fail to contain the Hamming graphs equi-coarsely, nonetheless this property rules out certain ``canonical'' embeddings as described below

\begin{prop}\label{P:6.9} If $Y$ is an asymptotic-subsequential-$c_0$ space then there is no sequence of maps $(f_k)_k$, such that $f_k: \ham^\omega_k\to Y$, and where
$(f_k)_k$ is a sequence of equi-coarse embeddings of $({\ham}^\omega_k)_k$ into  $Y$ with the property that for every $k\in \N$ there is a normalized weakly null array $(y^{(i)}_j: 1\le i\le k, j\kin\N)$ so that
$$f_k(\mb)=\sum_{i=1}^k y^{(i)}_{m_i} ,\text{ for all $\mb=\{m_1,m_2,\ldots ,m_k\}\in[\N]^k$.}$$
\end{prop}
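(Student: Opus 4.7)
The plan is to argue by contradiction: assume such a sequence $(f_k)_k$ exists with a compression modulus $\rho$ with $\lim_{t\to\infty}\rho(t)=\infty$, and for each sufficiently large $N$ produced by the asymptotic-subsequential-$\co$ property, I exhibit $\mb,\nb\in[\N]^N$ with $d^{(N)}_{\ham}(\mb,\nb)=n\to\infty$ but $\|f_N(\mb)-f_N(\nb)\|$ uniformly bounded. The central device is the consecutive-difference array $u^{(i)}_j:=y^{(i)}_{2j-1}-y^{(i)}_{2j}$, which is bounded and weakly null in each row. By a diagonal passage to a subarray the limits $\alpha_i:=\lim_{j\to\infty}\|u^{(i)}_j\|\in[0,2]$ exist. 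If infinitely many $\alpha_i$ vanish, given $n$ and $\varepsilon>0$ I pick $i_1<\cdots<i_n$ from that set and columns $j_k$ with $\|u^{(i_k)}_{j_k}\|<\varepsilon/n$. Choosing the $j_k$ very spread out, say $j_k=10Nk$, I define $m_{i_k}:=2j_k-1$, $n_{i_k}:=2j_k$, and at each position $\ell\notin I:=\{i_1,\ldots,i_n\}$ I place a common value $m_\ell=n_\ell$ in the open interval $(2j_k,2j_{k+1}-1)$ corresponding to the rank of $\ell$; these intervals have width $\gg N$, so both $\mb$ and $\nb$ are legitimate strictly increasing $N$-tuples, $d^{(N)}_{\ham}(\mb,\nb)=n$, and $\|f_N(\mb)-f_N(\nb)\|\le\varepsilon$, contradicting equi-coarseness once $\rho(n)>\varepsilon$.

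Otherwise, after discarding finitely many rows, every $\alpha_i>0$, and the normalization $\tilde u^{(i)}_j:=u^{(i)}_j/\|u^{(i)}_j\|$ defines a weakly null array of norm-one vectors in $Y$. By Proposition \ref{P:3.5} and the construction of joint spreading models recalled before Lemma \ref{L:3.10}, I pass to a further subarray generating a joint spreading model $(\tilde e^{(i)}_j)$. The key claim is that $(\tilde e^{(i)}_1)_{i=1}^N\in\{Y\}_N$: given $X_1,\ldots,X_N\in\cof(Y)$, I choose iteratively $\ell_j>\ell_{j-1}$ using weak null-ness of row $j$ (which forces $\tilde u^{(j)}_\ell$ to approach any fixed finite-codimensional subspace) so that $\tilde u^{(j)}_{\ell_j}$ is $\varepsilon/N$-close to $S_{X_j}$, while $\ell_1$ is large enough that the joint spreading model approximation delivers $(\tilde u^{(j)}_{\ell_j})_{j=1}^N\sim_{1+\varepsilon}(\tilde e^{(j)}_1)_{j=1}^N$; normalizing the projections onto the $X_j$ produces admissible $x_j\in S_{X_j}$ that remain $(1+O(\varepsilon))$-equivalent to $(\tilde e^{(j)}_1)$. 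Applying the asymptotic-subsequential-$\co$ hypothesis to this element of $\{Y\}_N$ yields $i_1<\cdots<i_n\le N$ so that $(\tilde e^{(i_k)}_1)_{k=1}^n$ is $C$-equivalent to the $\ell_\infty^n$-basis.

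For this $I=\{i_1,\ldots,i_n\}$ I choose a height-one plegma $t^{(1)}<\cdots<t^{(N)}$ with $t^{(i_k)}=10Nk$ and the remaining $t^{(j)}$'s interpolated, whose first entry is large enough for the joint spreading model approximation. Since $u^{(i)}_j=\|u^{(i)}_j\|\tilde u^{(i)}_j$ and $\|u^{(i)}_j\|\to\alpha_i\le 2$, the joint spreading model gives $\|\sum_{k=1}^n u^{(i_k)}_{t^{(i_k)}}\|$ within $\varepsilon$ of $\|\sum_{k=1}^n\alpha_{i_k}\tilde e^{(i_k)}_1\|\le C\max_k\alpha_{i_k}\le 2C$, so $\|\sum_{k=1}^n(y^{(i_k)}_{2t^{(i_k)}-1}-y^{(i_k)}_{2t^{(i_k)}})\|\le 2C+\varepsilon$. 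I finally define $\mb,\nb\in[\N]^N$ exactly as in the first case with these $i_k$'s and $j_k:=t^{(i_k)}$; the uniform spacing $10Nk$ again leaves room to place fillers, so $\|f_N(\mb)-f_N(\nb)\|\le 2C+\varepsilon$ with $d^{(N)}_{\ham}(\mb,\nb)=n\to\infty$, contradicting $\rho(n)\to\infty$. The main obstacles I foresee are (i) carefully executing the asymptotic-game argument that identifies the slice $(\tilde e^{(i)}_1)_{i=1}^N$ with an element of $\{Y\}_N$, meeting the adversarial $X_j$ and the plegma condition simultaneously, and (ii) verifying that the filler values for non-distinguished positions always produce strictly increasing $N$-tuples regardless of where $\{i_1,\ldots,i_n\}$ sits inside $\{1,\ldots,N\}$; the uniform spacing $t^{(i_k)}=10Nk$ resolves (ii), while (i) is a standard but somewhat delicate weak-null / plegma selection.
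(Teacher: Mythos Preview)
Your argument is workable, but it takes a considerably more elaborate route than the paper's, and two points need repair.  First, for each $n$ the array $(y^{(i)}_j:1\le i\le N,\,j\in\N)$ attached to $f_N$ has exactly $N$ rows, so phrases like ``infinitely many $\alpha_i$ vanish'' or ``after discarding finitely many rows'' are ill-posed: the dichotomy you want is ``at least $n$ of $\alpha_1,\ldots,\alpha_N$ vanish'' versus ``at most $n-1$ do''.  Second, in your Case~2 you discard up to $n-1$ rows, but since you took $N=N(n)$ the surviving height $N'\ge N-n+1$ may fall below the threshold required by the asymptotic-subsequential-$\co$ property; you must start with a larger height (e.g.\ work with $f_{N(n)+n-1}$).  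With these adjustments your proof goes through.

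The paper avoids all of this machinery by working directly with the \emph{original} normalized weakly null array $(y^{(i)}_j)$ rather than the difference array.  Passing to a subarray yields a (finite) asymptotic model $(e_i)_{i=1}^N$, which---by the standard weak-null/cofinite-subspace argument you yourself sketch---lies in $\{Y\}_N$.  Asymptotic-subsequential-$\co$ then produces $i_1<\cdots<i_n$ with $\big\|\sum_{l}e_{i_l}\big\|\le C$, and for any $\mb,\nb\in[\N]^N$ differing exactly at these coordinates with entries far out one gets, via the triangle inequality and the asymptotic-model approximation,
\[
\rho(n)\le\|f_N(\mb)-f_N(\nb)\|\le\Big\|\sum_l y^{(i_l)}_{m_{i_l}}\Big\|+\Big\|\sum_l y^{(i_l)}_{n_{i_l}}\Big\|\le 2C+\varepsilon,
\]
which is the contradiction.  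Because $(y^{(i)}_j)$ is already normalized there is no need for the difference array $u^{(i)}_j$, no normalization issue, and hence no case split; moreover your ``key claim'' about $(\tilde e^{(i)}_1)_{i=1}^N$ only uses the $m=1$ plegma case of the joint spreading model, which is exactly the asymptotic-model statement, so the joint-spreading-model apparatus buys nothing extra here.
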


\begin{proof}
Let $Y$ be a $C$-asymptotic-subsequential-$c_0$ space and let us fix an increasing sequence of non-negative real numbers $(\rho_n)_n$. Let us assume that for every $k\in\N$ we can find a normalized weakly null array $(y^{(i)}_j: 1\le i\le k, j\kin\N)$ in $Y$ so that for all $m\leq k$, all $i_1<\cdots<i_m$ and $j_1<\cdots<j_m$ we have $\|\sum_{l=1}^my^{(i_l)}_{j_l}\| \geq \rho_m$. We pass to a subarray that generates a finite asymptotic model $(e_i)_{i=1}^k$. This asymptotic model has the property that for all $1\leq m\leq k$ and $1\leq i_1<\cdots<i_m\leq n$ we have $\|\sum_{l=1}^me_{i_l}\| \geq \rho_m$. Additionally, $(e_i)_{i=1}^k\in\{X\}_k$. Since this is the case for all $m,k\in\N$ we can easily conclude using the definition of $C$-asymptotic-subsequential-$c_0$ that $\rho_m\leq C$ for all $m\in\N$. But this means that  $(f_k)_k$, defined above, is not a sequence of equi-coarse embeddings of
$({\ham}^\omega_k)_{k\in \N}$ into $Y$.
\end{proof}

\begin{rem}\label{R:6.10}
The above proof with minor modifications shows that a reflexive asymptotic-subsequential-$c_0$ space $Y$ cannot have the following property:
\begin{enumerate}
\item[(\dag)]
There are sequences $\big(\rho(n)\big)_n,\big(\mu(n)\big)_n\subset (0,\infty)$ with $\rho(n),\mu(n)\nearrow\infty$, if $n\nearrow\infty$, and for each $k\in\N$ a weakly
null tree $(y^{(k)}_{\nb})_{\nb\in[\N]^{\le k}}\subset B_Y$, so that for all
$k\in\N$ and all $\mb,\nb\in [\N]^k$ , $\mb=\{m_1,m_2,\ldots,m_k\}$, and  $\nb=\{n_1,n_2,\ldots,n_k\}$
$$\rho\big(d^{(k)}_\ham(\mb,\nb)\big)\le\Big\| \sum_{i=1, m_i\not=n_i}^k  y^{(k)}_{\{m_1,m_2,\ldots, m_i\}}- y^{(k)}_{\{n_1,n_2,\ldots, n_i\}}\Big\|$$
and $$\rho\big(d^{(k)}_\ham(\mb,\nb)\big)\le\Big\| \sum_{i=1}^k  y^{(k)}_{\{m_1,m_2,\ldots, m_i\}}- y^{(k)}_{\{n_1,n_2,\ldots, n_i\}}\Big\|\le\mu(d^{(k)}_\ham(\mb,\nb)).$$
\end{enumerate}
The existence of trees $(y_{\mb}^{(k)}: \mb\in [\N]^k)$ satisfying the condition $(\dag)$ above, means that the maps
$$f_k: \ham^{\omega}_k\to Y,\quad \{m_1,m_2,\ldots, m_k\}\mapsto \sum_{i=0}^k y^{(k)}_{\{m_1,\ldots, m_i\}}$$
are equi-coarse embeddings, and that the lower bound for $\|f_k(\mb)-f_k(\nb)\|$ is witnessed by the values of $y^{(k)}_{\{m_1,m_2,\ldots, m_i\}}- y^{(k)}_{\{n_1,n_2,\ldots, n_i\}}$, where $m_i\not=n_i$,
for $\mb=\{m_1,m_2,\ldots,m_k\}$, and  $\nb=\{n_1,n_2,\ldots,n_k\}$ in $[\N]^k$.
\end{rem}

\subsubsection{$T^*(T^*)$ is asymptotic-subsequential-$\co$}\ \\

The main goal of this section is to prove that $T^*(T^*) $ is  asymptotic-subsequential-$c_0$ and thereby finishing the proof of   Theorem \ref{T:C}. We start with some preparatory work. The following property of $T^*$ (see \cite{Tsirelson1974}*{Lemma 4}) is essential:
\begin{equation}\label{E:6.1a}
\Big\| \sum_{j=1}^n x_j\Big\|_{T^*}\le 2\max_{1\leq j\leq n}\|x_j\|_{T^*} \text{ whenever $(x_j)_{j=1}^n$ is a block sequence, with $n\le \supp(x_1)$.}
\end{equation}
and thus, under a slightly weaker condition
\begin{equation}\label{E:6.2}
\Big\| \sum_{j=1}^n x_j\Big\|_{T^*}\le 3\max_{1\leq j\leq n}\|x_j\|_{T^*} \text{ whenever $(x_j)_{j=1}^n$ is a block sequence, with $n\le \supp(x_2)$.}
\end{equation}
\begin{rem}
\label{R:6.2}
The fact that $T^*$ is $2$-asymptotic-$c_0$ is an easy consequence of the above estimate \eqref{E:6.1a}. This well known fact
 is hard to track down in the literature, and follows from the fact that every weakly null tree admits an refinement for which all branches are  arbitrary small perturbations of blocks.
  A noteworthy comment is that in \cite{OdellSchlumprechtZsak2008} the notion of asymptotic-$\ell_p$, $1\leq p\leq\infty$ with respect to a finite dimensional decomposition (FDD) was introduced and it was proved that a reflexive space is asymptotic-$\ell_p$ if and only if it linearly embeds in a space that is asymptotic-$\ell_p$ with respect to an FDD.
 \end{rem}

Recall that the norm of $T$ satisfies the implicit formula \eqref{E:6.1}. We will need the following observation for the space $T^*$, which follows from a statement for $T$, proved in
\cite{CasazzaOdell1983}*{Theorem 2}.

\begin{prop}
\label{P:6.3}
There exists a constant $D_M>0$ so that the following holds. For every $n\in\mathbb{N}$,  any vectors $x_1,\ldots,x_n$ in $T^*$, having disjoint supports, with $\min(\mathrm{supp}(x_k))\ge n$, for $1\leq k\leq n$, it follows that
$$\Big\|\sum_{k=1}^nx_k\Big\|_{T^*} \leq D_M \max_{1\leq k\leq n}\|x_k\|_{T^*}.$$
\end{prop}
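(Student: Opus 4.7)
The plan is to reduce the statement to the admissibility estimate \eqref{E:6.1a} for $T^*$. The basis $(e^*_j)$ of $T^*$ is $1$-unconditional, so the quantity $\|\sum_{k=1}^n x_k\|_{T^*}$ depends only on the set $\{x_1,\dots,x_n\}$ of disjointly supported summands and not on their ordering. I would therefore permute the $x_k$'s and produce a block sequence $(y_k)_{k=1}^n$ with $\supp(y_1)<\supp(y_2)<\cdots<\supp(y_n)$ and $\{y_1,\ldots,y_n\}=\{x_1,\ldots,x_n\}$.

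Since $\min\supp(x_k)\ge n$ for every $k$, the same holds for the first block, i.e., $n\le\supp(y_1)$. This places $(y_k)_{k=1}^n$ exactly in the hypothesis of estimate \eqref{E:6.1a}, so
\begin{equation*}
\Big\|\sum_{k=1}^n x_k\Big\|_{T^*}=\Big\|\sum_{k=1}^n y_k\Big\|_{T^*}\le 2\max_{1\le k\le n}\|y_k\|_{T^*}=2\max_{1\le k\le n}\|x_k\|_{T^*},
\end{equation*}
which establishes the proposition with $D_M=2$.

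Alternatively, one can follow the route suggested by the Casazza--Odell citation and argue by duality. Writing $E_k=\supp(x_k)$ and using $T^{**}=T$ reflexivity along with $\langle E_k y,x_k\rangle = \langle y,x_k\rangle$, one has
\begin{equation*}
\Big\|\sum_{k=1}^n x_k\Big\|_{T^*}=\sup_{\|y\|_T\le 1}\sum_{k=1}^n\langle E_k y,x_k\rangle\le \Big(\max_{1\le k\le n}\|x_k\|_{T^*}\Big)\sup_{\|y\|_T\le 1}\sum_{k=1}^n\|E_k y\|_T.
\end{equation*}
The Casazza--Odell estimate for $T$ (a direct consequence, after reordering, of the implicit formula \eqref{E:6.1}) says that for any pairwise disjoint $E_1,\ldots,E_n$ with $\min E_k\ge n$ one has $\sum_{k=1}^n\|E_k y\|_T\le D_M\|y\|_T$, which yields the same conclusion.

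The main ``obstacle'' is really only bookkeeping: one must notice that the disjoint-supports condition together with $\min\supp(x_k)\ge n$ for each $k$ is, after a permutation permitted by $1$-unconditionality, exactly the $n$-admissibility hypothesis of \eqref{E:6.1a} (dually, of the implicit formula in $T$). Once this is observed, the proof is a one-line invocation of \eqref{E:6.1a} (or, equivalently, of the Casazza--Odell theorem on $T$).
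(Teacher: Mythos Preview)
Your first approach contains a genuine gap: disjointly supported vectors cannot in general be permuted into a block sequence. For instance, with $n=2$, take $x_1=e^*_2+e^*_4$ and $x_2=e^*_3$. Their supports are disjoint and both satisfy $\min\supp(x_k)\ge 2$, yet no reordering of $\{x_1,x_2\}$ produces $y_1,y_2$ with $\max\supp(y_1)<\min\supp(y_2)$. Thus \eqref{E:6.1a}, which requires a genuine block sequence, is simply not available here, and in particular you cannot conclude $D_M=2$.

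Your second approach is the right setup---it is exactly the duality argument the paper uses---but the step you call ``a direct consequence, after reordering, of the implicit formula \eqref{E:6.1}'' has the same error. The implicit formula for $\|\cdot\|_T$ only controls $\sum_{k=1}^n\|E_k(y)\|_T$ for \emph{admissible} families $n\le E_1<E_2<\cdots<E_n$, not for arbitrary pairwise disjoint $E_k$'s with $\min E_k\ge n$. Passing from consecutive to disjoint families is precisely the content of the Casazza--Odell theorem (Theorem~\ref{T:6.4}): the modified Tsirelson norm $\|\cdot\|_M$ is defined via disjoint families, its implicit formula \eqref{E:6.3} gives $\sum_{k=1}^n\|E_k(y)\|_M\le 2\|y\|_M$ for such families, and the nontrivial equivalence $\|\cdot\|_M\le C_M\|\cdot\|_T$ then yields $\sum_{k=1}^n\|E_k(y)\|_T\le 2C_M\|y\|_T$. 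This is what the paper does, obtaining $D_M=2C_M$; you have skipped the one substantive ingredient.
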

Note that in Proposition \ref{P:6.3}, the vectors have disjoint supports (as opposed to consecutive supports as in \eqref{E:6.2}). In order to prove Proposition \ref{P:6.3} we need to introduce some necessary notions. A norm very similar to $\|\cdot\|_T$ was defined by W. B. Johnson in \cite{Johnson1976b}. It is called the {\em modified Tsirelson norm}, we denote this norm  by $\|\cdot\|_M$ and it satisfies the implicit formula
\begin{equation}\label{E:6.3}
\|x\|_M = \max\Big\{\|x\|_\infty,\frac{1}{2}\sup\sum_{k=1}^n\|E_k(x)\|_M\Big\}\end{equation}
where the supremum is taken over all $n\in\mathbb{N}$ and \emph{disjoint} subsets $(E_k)_{k=1}^n$ of $\mathbb{N}$ with $ n \leq \min(E_k)$ for $1\leq k\leq n$. Note that there is a unique norm $\|\cdot\|_M$ satisfying this implicit formula (this can, e.g., be shown by induction on the size of the support of the vector $x$). The main statement we need to prove Proposition \ref{P:6.3} is the following.
\begin{thm} {\rm (\cite{CasazzaOdell1983}*{Theorem 2}, see also \cite{CasazzaShura1989}*{Theorem V.3})}
\label{T:6.4}

There exists a constant $C_M>0$ so that for any sequence of scalars $(a_i)_{i=1}^n$ we have
\[\Big\|\sum_{i=1}^na_ie_i\Big\|_T\leq \Big\|\sum_{i=1}^na_ie_i\Big\|_M \leq C_M\Big\|\sum_{i=1}^na_ie_i\Big\|_T.\]
\end{thm}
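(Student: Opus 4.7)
The inequality $\|x\|_T\leq \|x\|_M$ is the easy half. I would prove it by introducing iterated norms $\|\cdot\|_{T,k}$ and $\|\cdot\|_{M,k}$: set $\|\cdot\|_{T,0}\keq \|\cdot\|_{M,0}\keq \|\cdot\|_\infty$, and define $\|\cdot\|_{T,k+1}$ and $\|\cdot\|_{M,k+1}$ by inserting the $k$-th norms on the right-hand sides of \eqref{E:6.1} and \eqref{E:6.3} respectively. These sequences are pointwise non-decreasing, dominated by $\|\cdot\|_1$, and converge to the unique fixed points $\|\cdot\|_T$ and $\|\cdot\|_M$ (uniqueness by induction on $|\supp(x)|$). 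Since every admissible family satisfying $n\leq E_1<\cdots<E_n$ is in particular a disjoint family with $n\leq \min_j E_j$, a trivial induction on $k$ gives $\|\cdot\|_{T,k}\leq \|\cdot\|_{M,k}$, and passing to the limit yields $\|\cdot\|_T\leq \|\cdot\|_M$.

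For the substantive direction $\|x\|_M\leq C_M\|x\|_T$ I would induct on the $M$-depth of $x\in c_{00}$ (the minimal $k$ with $\|x\|_{M,k}=\|x\|_M$, which is finite because $x$ has finite support). If the max in \eqref{E:6.3} is attained at $\|x\|_\infty$ we are done; otherwise $\|x\|_M\keq \tfrac12\sum_j\|E_j(x)\|_M$ for some disjoint $(E_j)_{j=1}^n$ with $n\leq \min_j E_j$. Applying the inductive hypothesis to each $E_j(x)$ (which has strictly smaller $M$-depth) reduces everything to the combinatorial strengthening
\begin{equation*}
\sum_{j=1}^n \|E_j(x)\|_T \leq C_M'\,\|x\|_T\quad\text{for every disjoint }(E_j)_{j=1}^n\text{ with }n\leq \min_j E_j.
\end{equation*}
The naive approach — decompose $(E_j)$ into admissible subfamilies, apply the Tsirelson implicit formula \eqref{E:6.1}, and sum — fails because such a decomposition may require $\Theta(\log n)$ pieces in the worst case (consider many short sets nested inside one long one), which would give a constant that blows up with $n$.

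The correct route, following Casazza-Odell, is to decompose $(E_j)$ not intrinsically but relative to a fixed tree of admissible decompositions witnessing $\|x\|_T$. At each node of this tree, the subfamily of $E_j$'s that ``cross'' that node is controlled using the dual of the Tsirelson block inequality \eqref{E:6.1a} (which gives $\|\sum_{j=1}^n y_j\|_{T^*}\leq 2\max_j\|y_j\|_{T^*}$ for blocks with $n\leq \supp(y_1)$): this allows one to absorb the non-admissible overlaps into a multiplicative constant per level, and the multiplicative constants stabilize rather than accumulate because \eqref{E:6.1a} is an $\ell_\infty$-estimate rather than an $\ell_1$-estimate. The main obstacle of the proof is precisely this stabilization argument along the norming tree, which is what extracts a uniform $C_M$ independent of $n$ and of $|\supp(x)|$; we take this combinatorial lemma as the key technical input, supplied by \cite{CasazzaOdell1983}*{Theorem 2} (or its exposition in \cite{CasazzaShura1989}*{Theorem V.3}), and the induction above then closes.
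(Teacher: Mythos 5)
The paper offers no proof of this statement at all: it is quoted verbatim from \cite{CasazzaOdell1983}*{Theorem 2} (see also \cite{CasazzaShura1989}*{Theorem V.3}) and used as a black box in the proof of Proposition \ref{P:6.3}. So the only honest comparison is between your sketch and the cited source. Your treatment of the easy half $\|x\|_T\leq\|x\|_M$ via the iterated norms is correct and standard. The hard half, however, has a genuine gap. Your induction on $M$-depth does not close with the combinatorial lemma as you state it: from $\|x\|_M=\tfrac12\sum_j\|E_j(x)\|_M$ and the inductive hypothesis you get $\|x\|_M\leq\tfrac{C_M}{2}\sum_j\|E_j(x)\|_T\leq\tfrac{C_MC_M'}{2}\|x\|_T$, which recovers the bound $C_M\|x\|_T$ only if $C_M'\leq 2$. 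If $C_M'>2$ the constant compounds at every level of the induction and blows up with the $M$-depth of $x$ (which is unbounded) --- this is exactly the accumulation problem you correctly diagnose for the naive decomposition, reintroduced one level up. And if $C_M'=2$, then $\max\big(\|x\|_\infty,\tfrac12\sup\sum_j\|E_j(x)\|_T\big)=\|x\|_T$ over all disjoint families with $n\leq\min_jE_j$, so by uniqueness of the fixed point of \eqref{E:6.3} one would get $\|\cdot\|_M=\|\cdot\|_T$ exactly, which is stronger than the claim and not what is true.

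The second problem is circularity: the ``key technical input'' you invoke to stabilize the constants along the norming tree is attributed to \cite{CasazzaOdell1983}*{Theorem 2} --- but that \emph{is} the theorem being proved. So after the (correct) easy direction, the proposal reduces the statement to a lemma it cannot close inductively and then cites the statement itself to supply that lemma. The actual Casazza--Odell argument is not a per-level induction of this shape; the constant loss there is global rather than multiplicative across depths. If you want a self-contained proof you would need to reproduce that argument (or the exposition in \cite{CasazzaShura1989}*{Theorem V.3}); if you are content to cite it, the right move is to cite the whole theorem, as the paper does, rather than a fragment of its proof.
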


\begin{proof}[Proof of Proposition \ref{P:6.3}]
Let $x_1,x_2,\ldots,x_n\in T^*$ have pairwise disjoint support with $\min(\supp(x_j))\ge n$, for $j=1,2,\ldots, n$. We first choose $y\in S_T$, with
$y(\sum_{j=1}^n x_j)=\big\| \sum_{j=1}^n x_j\big\|_{T^*}$. By the  1-unconditionality of the basis of $T$, we can assume that $\supp(y)\subset \bigcup_{j=1}^n \supp(x_j)$, and
letting $y_j=\supp(x_j)(y)$ we deduce from Theorem \ref{T:6.4}  and \eqref{E:6.3} that
\begin{align*}
\Big\|\sum_{j=1}^n x_j\Big\|_{T^*}&=\sum_{j=1}^n y_j(x_j)\le \sum_{j=1}^n \|y_j\|_T\cdot \max_{j=1,\ldots,n} \|x_j\|_{T^*}\le \sum_{j=1}^n \|y_j\|_M\cdot \max_{j=1,\ldots,n} \|x_j\|_{T^*}\\
&\le 2\Big\|\sum_{j=1}^n y_j\Big\|_M\cdot \max_{j=1,\ldots,n} \|x_j\|_{T^*}
\le 2C_M \Big\|\sum_{j=1}^n y_j\Big\|_T\cdot \max_{j=1,\ldots,n} \|x_j\|_{T^*}
\le 2C_M    \max_{j=1,\ldots,n} \|x_j\|_{T^*},
\end{align*}
which implies our claim if we choose $D_M=2C_M$.
\end{proof}

We denote the basis of $T^*$  now by $(e_j)$.
For $A\subset \N$ we denote by $P_A$ the projection
$$P_A: T^*(T^*)\to T^*(T^*) ,\quad (x_n)\mapsto  (x_n)_{n\in A},$$
Note that
$$\big\|P_A\big((x_n)\big)\big\|=\Big\|\sum_{j\in A} \|x_j\|e_j\Big\|_{T^*}.$$
We call for $i\in \N$ the space $P_i(T^*(T^*))=P_{\{i\}}(T^*(T^*))\equiv T^*$, {\em the $i$'th component of } $(T^*(T^*))$
and we denote by  $(e^{(i)}_j)_j$ the basis of the $i$-th component (which is of course isometrically equivalent to $(e_j)$).
For $R\subset \N^2$ we denote by $P_R$ the (norm $1$) projection
$$P_R: T^*(T^*)\to T^*(T^*)\,\quad \sum_i \sum_j a_{(i,j)} e^{(i)}_j \mapsto \sum_{(i,j)\in R} a_{(i,j)} e^{(i)}_j .$$

The first out of two key Lemmas towards showing Theorem \ref{T:C} is the following
\begin{lem}\label{L: 2} Let $k\in\N$ and $k=n_0<n_1<\ldots <n_k$. For $j=1,2,\ldots, k$ put
 $R_j=(k,n_j]\times[1,n_j]$ and  let  $z_j\in P_{R_j\setminus R_{j-1}}(T^*(T^*))$, with $\|z_j\|\leq 1$.
 Then it follows for $(a_j)_{j=1}^k\subset \R$ that
 \begin{equation}
 \label{E:2.1} \Big\|\sum_{j=1}^k a_j z_j\Big\|\le 3D_M\max_{j=1,2,\ldots, k}|a_j|
 \end{equation}
\end{lem}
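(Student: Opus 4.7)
My plan is to decompose each $z_j$ along the L-shape $R_j \setminus R_{j-1}$ into its ``new columns'' part and its ``old columns, new rows'' part. Concretely, write $R_j \setminus R_{j-1} = A_j \cup B_j$ with $A_j := (k, n_{j-1}] \times (n_{j-1}, n_j]$ and $B_j := (n_{j-1}, n_j] \times [1, n_j]$, set $z_j^A := P_{A_j} z_j$ and $z_j^B := P_{B_j} z_j$, and split $\sum_j a_j z_j = V + Y$ with $V := \sum_j a_j z_j^B$ and $Y := \sum_j a_j z_j^A$. The plan is to establish the estimates $\|V\| \le D_M \max_j |a_j|$ and $\|Y\| \le 2 D_M \max_j |a_j|$, which combined via the triangle inequality give the claimed bound $3 D_M \max_j |a_j|$.

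The estimate for $V$ is the easier one. The $z_j^B$ have pairwise disjoint outer supports contained in $S_j := (n_{j-1}, n_j]$, each with $\min S_j \ge n_{j-1} + 1 \ge k+1$. Computing $V$ column by column gives $\|V\| = \|\sum_j |a_j| \eta_j\|_{T^*}$ where $\eta_j := \sum_{i \in S_j} \|(z_j^B)_i\|_{T^*} e_i$ has $T^*$-norm at most $\|z_j^B\| \le 1$, and the $\eta_j$'s are pairwise disjointly supported with minimum supports $\ge k+1$. Proposition~\ref{P:6.3} applied to the $k$ vectors $(|a_j|\eta_j)_{j=1}^k$ in $T^*$ then yields the desired estimate.

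The bound on $Y$ is the main obstacle, because the $z_j^A$ have nested (not disjoint) outer supports $(k, n_{j-1}]$, so Proposition~\ref{P:6.3} cannot be applied directly on the outer vector of $Y$. My plan is to first bound each column of $Y$ using the inner $T^*$ structure together with the estimate \eqref{E:6.1a}, and then bound the resulting outer expression by a second application of Proposition~\ref{P:6.3}. For $i \in S_l$ with $l \le k-1$, the column $Y_i = \sum_{j = l+1}^{k} a_j (z_j^A)_i$ is a block sequence in $T^*$ whose $j$-th block is supported in $(n_{j-1}, n_j]$; since the first block has $\min$ support $\ge n_l + 1 \ge k + l + 1$, which exceeds the number $k - l$ of blocks, \eqref{E:6.1a} gives $\|Y_i\|_{T^*} \le 2 \max_j |a_j| \cdot \max_j \beta_j^A(i)$, where $\beta_j^A(i) := \|(z_j^A)_i\|_{T^*}$.

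It then remains to bound $\big\| \sum_i (\max_j \beta_j^A(i))\, e_i \big\|_{T^*}$. Here I would pick, for each $i$, an index $j(i) \in \arg\max_j \beta_j^A(i)$ and set $I_j := \{ i : j(i) = j \}$ for $j = 2, \dots, k$ (noting that $z_1^A = 0$, so $j(i) \ne 1$). These $k-1$ sets are pairwise disjoint and each is contained in $(k, n_{j-1}]$, giving $\min I_j \ge k+1$. Since $\sum_{i \in I_j} \beta_j^A(i) e_i$ is a restriction of the outer vector of $z_j^A$, $1$-unconditionality bounds its $T^*$-norm by $\|z_j^A\| \le 1$, and Proposition~\ref{P:6.3} applied to these $k-1$ disjointly supported vectors yields a bound of $D_M$. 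Combining gives $\|Y\| \le 2 D_M \max_j |a_j|$, and hence $\|V + Y\| \le 3 D_M \max_j |a_j|$, as desired.
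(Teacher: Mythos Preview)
Your proof is correct and follows essentially the same strategy as the paper: estimate each outer column via the $T^*$ block inequality, then choose for each column the index $j$ realizing the maximum, partition the columns accordingly, and apply Proposition~\ref{P:6.3} to the resulting disjointly supported pieces. The only organizational difference is that the paper does \emph{not} split off the ``new columns'' part $V$; instead it keeps each column $y^{(i)}=\sum_{l\ge j}a_l u_l^{(i)}$ intact and invokes \eqref{E:6.2} (constant $3$, since only the \emph{second} block is guaranteed to start far out), obtaining $\|y^{(i)}\|\le 3|a_{l_i}|\|u_{l_i}^{(i)}\|$ and then a single application of Proposition~\ref{P:6.3}. Your split trades this for \eqref{E:6.1a} (constant $2$) on the $Y$-part together with a separate, direct application of Proposition~\ref{P:6.3} on the outer-disjoint $V$-part, arriving at the same bound $3D_M$.
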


\begin{proof}
 For $j=1,2, \ldots ,k$ we write $z_j$ as
\begin{align*}
            z_j=\sum_{i=k+1}^{n_{j-1}}\underbrace{\sum_{s=n_{j-1}+1}^{n_j}  z_j (i,s) e^{(i)}_s}_{u^{(i)}_j, \text{ for $k<i\le n_{j-1}$}}
              + \sum_{i=n_{j-1}+1}^{n_j}
           \underbrace{ \sum_{s=1}^{n_j} z_j (i,s) e^{(i)}_s}_{u^{(i)}_j, \text{ for $n_{j-1}<i\le n_{j}$}}.
           \end{align*}
Thus
\begin{align*}
\sum_{j=1}^k  a_jz_j=
\sum_{j=1}^k  a_j\Bigg[\sum_{i=k+1}^{n_{j-1}}\sum_{s=n_{j-1}+1}^{n_j} z_j (i,s) e^{(i)}_s+ \sum_{i=n_{j-1}+1}^{n_j}
             \sum_{s=1}^{n_j} z_j (i,s) e^{(i)}_s \Bigg]=\sum_{i=k+1}^{n_k}  y^{(i)},
\end{align*}
where for $i=k+1,\ldots n_k$, say $ n_{j-1}<i \le n_j$, for some $j=1,2,\ldots k$ we have
$$y^{(i)} =P_i\Big(\sum_{j=1}^k  a_jz_j \Big)=
\sum_{l=j+1}^k a_l\sum_{s=n_{l-1}+1}^{n_l} z_l(i,s) e^{(i)}_s+
a_j\sum_{s=1}^{n_j}z_j(i,s)e^{(i)}_s= \sum_{l=j}^k a_l u^{(i)}_l.$$

The following picture visualizes the above decompositions.

  \begin{center}
  \includegraphics[scale=1]{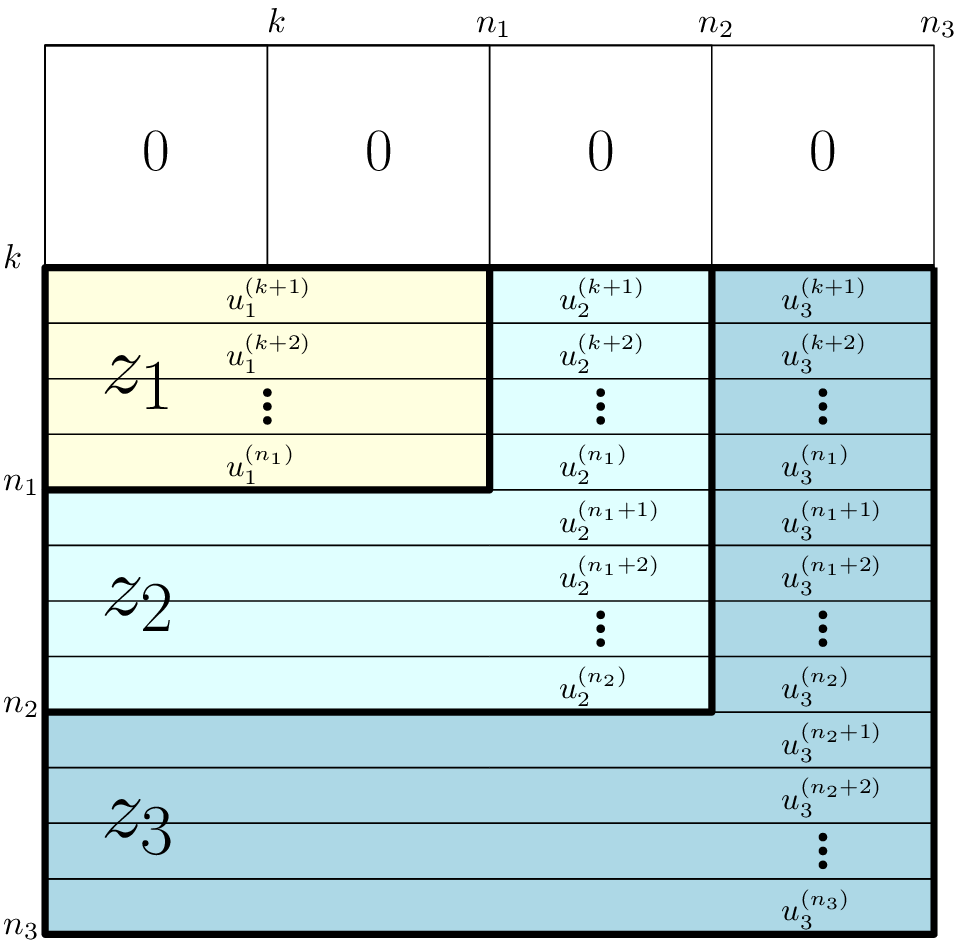}
  \end{center}

It follows from  \eqref{E:6.2} that for $n_{j-1}<i\le n_j$
\begin{equation}\label{E:3}
\big\| y^{(i)} \big\| \le 3\max_{l=j,\ldots  k} |a_l|\cdot\|u^{(i)}_{l}\|= 3|a_{l_i}|\cdot\|u^{(i)}_{l_i}\|
\end{equation}
where $j\le l_i\le k$ is a number for which above maximum is attained. For $j=1,2,\ldots k$ we
define $A_j=\{k< i\le n_k: l_i=j\}$. Then $(A_j)_{j=1}^k$ is a partition  of $\{k+1,\ldots,n_k\}$ and from Proposition \ref{P:6.3} and \eqref{E:3} we deduce that

\begin{align*}\Big\|\sum_{j=1}^k  a_jz_j\Big\|&= \Big\|\sum_{i=k+1}^{n_k} \big\| y^{(i)} \big\| e_i\Big\|_{T^*}\\
             & \le D_M \max_{j=1,\ldots,k} \Big\|\sum_{i\in A_j}\| y^{(i)} \big\| e_i\Big\|_{T^*}\\
             &\le 3D_M \max_{j=1,\ldots,k} \Big\|\sum_{i\in A_j} a_j \|u^{(i)}_j\| e_i\Big\|_{T^*}\\
             &\le 3 D_M\max_{j=1,\ldots,k} \Big\|\sum_{i=k+1}^{n_j}a_j \|u_j^{(i)}\| e_i\Big\|_{T^*}\\
             &= 3 D_M\max_{j=1,\ldots,k} |a_j|\|z_j\|\leq 3 D_M\max_{j=1,\ldots,k} |a_j|\|x_j\|\leq 3 D_M\max_{j=1,\ldots,k} |a_j|.
            \end{align*}
\end{proof}

The second key Lemma towards showing Theorem \ref{T:C} is the following
\begin{lem}\label{take care of the hat}
Let $k\in\N$, $M = k^{k+1}$, and $k=n_0<n_1<\ldots <n_M$. For $j=1,2,\ldots, k$ put
 $R_j=[1,k]\times[1,n_j]$ and  let  $w_j\in P_{R_j\setminus R_{j-1}}(T^*(T^*))$, with $\|w_j\|\leq 1$.
 Then, there exist $1\leq j_1<\cdots<j_k\leq M$ so that for $(a_\ell)_{\ell=1}^k\subset \R$ that
 \begin{equation}
 \label{E:2.1a} \Big\|\sum_{\ell=1}^k a_\ell w_{j_\ell}\Big\|\le 2\max_{\ell=1,2,\ldots, k}|a_\ell|
 \end{equation}
\end{lem}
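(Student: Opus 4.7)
The plan is to decompose each $w_j$ across the $k$ copies of $T^*$, run an iterated pigeonhole on the resulting profiles, and then combine the inner Tsirelson inequality \eqref{E:6.1a} with the structure extracted by pigeonhole to produce the factor $2$.

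\textbf{Step 1: Row decomposition.} Write $w_j=\sum_{i=1}^k v_j^{(i)}$ with $v_j^{(i)}:=P_i w_j$ sitting in the $i$-th copy of $T^*$ and supported on $(n_{j-1},n_j]$. Set $b_j^{(i)}:=\|v_j^{(i)}\|_{T^*}$, so the profile $(b_j^{(i)})_{i=1}^k$ lies in the unit ball of $T^*_k:=\mathrm{span}(e_1,\ldots,e_k)\subset T^*$, which satisfies $\sum_{i=1}^k b_j^{(i)}\,\big\|\sum_i b_j^{(i)} e_i\big\|_{T^*}\le 1$. By $1$-unconditionality of the $T^*(T^*)$-basis, it suffices to treat the case $a_\ell\geq 0$; after rescaling we may even reduce to proving $\big\|\sum_\ell w_{j_\ell}\big\|\leq 2$ for well-chosen $j_1<\cdots<j_k$.

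\textbf{Step 2: Iterated pigeonhole extraction.} With $M=k^{k+1}$ profiles on hand, I would run $k$ refinement rounds, each partitioning the surviving indices into at most $k$ classes (for instance, by the coordinate carrying the current dyadic-maximum mass, or by a quantization of the profile) and passing to the largest class. After $k$ rounds at least $M/k^k=k$ indices survive, producing $j_1<j_2<\cdots<j_k$ whose profiles are ``aligned'' in the sense that the envelope vector $E:=\sum_i\big(\max_{1\leq\ell\leq k} b_{j_\ell}^{(i)}\big)\,e_i$ lies in the unit ball of $T^*_k$. The statistic has to be calibrated to the head-of-$T^*$ structure so that after $k$ rounds a single vector (e.g.\ a designated $b_{j_{\ell_0}}$, or an explicit quantized majorant) dominates all the chosen profiles coordinatewise, forcing $\|E\|_{T^*}\leq 1$.

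\textbf{Step 3: Two-level Tsirelson estimate.} Given the extraction and any $(a_\ell)_{\ell=1}^k$ with $A:=\max_\ell|a_\ell|$, write
\[
\Big\|\sum_{\ell=1}^k a_\ell w_{j_\ell}\Big\|_{T^*(T^*)}
=\Big\|\sum_{i=1}^k c_i\,e_i\Big\|_{T^*},\qquad c_i:=\Big\|\sum_{\ell=1}^k a_\ell v_{j_\ell}^{(i)}\Big\|_{T^*}.
\]
For each fixed $i$, the vectors $(a_\ell v_{j_\ell}^{(i)})_{\ell=1}^k$ are $k$ disjointly supported blocks in the $i$-th copy of $T^*$ with $\min\mathrm{supp}\geq n_0+1=k+1$, so the Tsirelson inequality \eqref{E:6.1a} gives $c_i\leq 2\max_\ell |a_\ell|\,b_{j_\ell}^{(i)}\leq 2A\max_\ell b_{j_\ell}^{(i)}$. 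Using $1$-unconditionality of $(e_i)$ in $T^*$ and the envelope condition from Step 2,
\[
\Big\|\sum_i c_i e_i\Big\|_{T^*}
\leq 2A\Big\|\sum_i\max_\ell b_{j_\ell}^{(i)}\,e_i\Big\|_{T^*}
=2A\,\|E\|_{T^*}\leq 2A,
\]
which is precisely \eqref{E:2.1a}.

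\textbf{Main obstacle.} The delicate step is Step 2: a single round of pigeonhole on, say, the argmax only controls the dominant direction and can leave the envelope with $T^*_k$-norm as large as $k$ (for example via profiles of the form $b_j=\alpha_j e_i+\beta_j e_{i'}$ with $\alpha_j+\beta_j=1$ traversing a simplex, where no dominating vector in $B_{T^*_k}$ exists). The $k$ rounds allowed by $M=k^{k+1}$ must be exploited by a statistic that successively trims the non-dominant coordinates, mirroring the recursive construction of the $T^*$ norm so that in the end a single vector of $B_{T^*_k}$ majorizes all the selected profiles.
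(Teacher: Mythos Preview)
Your Steps~1 and~3 are on the right track and essentially coincide with the paper's computation. The gap is precisely where you locate it, in Step~2, but the issue is not that an iterated pigeonhole is hard to calibrate---it is that you have set yourself the wrong target. You want the envelope $E=\sum_i(\max_\ell b_{j_\ell}^{(i)})e_i$ to lie in the \emph{unit} ball of $T^*_k$, i.e.\ a single vector of $B_{T^*_k}$ majorizing all chosen profiles coordinatewise. As your own obstacle paragraph illustrates, this is genuinely hard and there is no reason to expect it with only $M=k^{k+1}$ profiles; your proposed $k$ rounds of argmax- or dyadic-type refinement do not produce such a dominating vector.

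The paper settles for $\|E\|_{T^*}\le 2$ and gets it in a \emph{single} pigeonhole step. Quantize each of the $k$ coordinates $b_j^{(i)}=\|P_iw_j\|\in[0,1]$ into $k$ subintervals of length $1/k$, partitioning $[0,1]^k$ into $k^k$ boxes. Since $M/k^k=k$, one application of pigeonhole yields $j_1<\cdots<j_k$ whose profiles land in a common box, so $\big|b_{j_\ell}^{(i)}-b_{j_1}^{(i)}\big|\le 1/k$ for all $i,\ell$. Then
\[
\|E\|_{T^*}\le\Big\|\sum_{i=1}^k b_{j_1}^{(i)}e_i\Big\|_{T^*}+\frac1k\Big\|\sum_{i=1}^k e_i\Big\|_{T^*}\le\|w_{j_1}\|+1\le 2,
\]
using only the trivial bound $\|\sum_{i=1}^k e_i\|_{T^*}\le k$. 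No $T^*$-adapted statistic or iteration is needed: the selected profiles are all $1/k$-close to the fixed profile $b_{j_1}$, which already sits in $B_{T^*_k}$ because $\|w_{j_1}\|\le 1$. Plugging $\|E\|\le 2$ into your Step~3 gives a constant $4$; the paper's displayed chain reaches $2$ by not carrying the factor $2$ from \eqref{E:6.1a} at the inner step, but this discrepancy in the constant is irrelevant for Proposition~\ref{blocks have subsequence c0} and Theorem~\ref{T:C}.
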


\begin{proof}
Define $f:\{1,M\}\to[0,1]^k$ given by  $f(j) = (\|P_1w_j\|,\|P_2w_j\|,\ldots,\|P_kw_j\|)$. Next, write $[0,1] = \cup_{d=1}^kI_d$, where $I_1 = [0,1/k]$, $I_2 = (1/k,2/k]$,\ldots,$I_k = ((k-1)/k,1]$. Define
\[\mathcal{I} = \{I_{d_1}\times I_{d_2}\times\cdots\times I_{d_k}: (d_1,d_2,\ldots,d_k)\in\{1,\ldots,k\}^k\}.\]
Note that $\mathcal{I}$ forms a partition of $[0,1]^k$ into $k^k$ sets. By the pigeonhole principle and the fact that $ M/k^k = k$, there exist $1\leq j_1<\cdots<j_k\leq M$ and $(d^0_1,\ldots,d^0_k)\in \{1,\ldots,k\}^k$  so that for $1\leq \ell\leq k$, $f(j_\ell)\in I_{d^0_1}\times I_{d^0_2}\times\cdots\times I_{d^0_k}$. In particular, for $1\leq \ell\leq k$ and $1\leq i\leq k$ we have
\begin{equation}
\label{they are all the same I tell you}
\Big|\Big\|P_iw_{j_\ell}\Big\| - \Big\|P_iw_{j_1}\Big\|\Big| \leq \frac{1}{k},
\end{equation}
i.e., the value $\|P_iw_{j_\ell}\|$, up to error $1/k$, depends only on $i$ and not on $\ell$.

Finally, take $a_1,\ldots,a_k$ with $\max_{1\leq \ell\leq k}|a_\ell|  = 1$ and estimate
\begin{equation*}
\begin{split}
\Big\|\sum_{\ell=1}^k a_\ell w_{j_\ell}\Big\| &= \Big\|\sum_{i=1}^k\Big\|\sum_{\ell=1}^kP_i(a_\ell w_{j_\ell})\Big\|e_i\Big\| \stackrel{\eqref{E:6.1a}}{\leq} \Big\|\sum_{i=1}^k\max_{1\leq\ell\leq k}\Big(|a_\ell|\|P_i( w_{j_\ell})\|\Big)e_i\Big\|\\
&\stackrel{\eqref{they are all the same I tell you}}{\leq} \Big\|\sum_{i=1}^k\|P_i( w_{j_1})\|e_i\Big\| + \Big\|\sum_{i=1}^k\frac{1}{k}e_i\Big\| \leq \|w_{j_1}\| + 1\leq 2.
\end{split} 
\end{equation*}
\end{proof}

We combine the two Lemmas above to obtain the following, from which Theorem \ref{T:C} will follow.
\begin{prop}\label{blocks have subsequence c0}
Let $k\in\N$, $M = k^{k+1}$, and $k=n_0<n_1<\ldots <n_M$. For $j=1,2,\ldots, M$ put
 $R_j=[1,n_j]^2$ and  let  $x_j\in P_{R_j\setminus R_{j-1}}(T^*(T^*))$, with $\|x_j\|=1$. Then, there exist $1\leq j_1<\cdots<j_k\leq M$ so that $(x_{j_\ell})_{\ell=1}^k$ is $(3D_M + 2)$-equivalent to the unit vector basis of $\ell_\infty^k$.
 \end{prop}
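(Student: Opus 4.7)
\bigskip
\noindent\textbf{Proof plan for Proposition \ref{blocks have subsequence c0}.}
The idea is to split each $x_j$ into a ``low-$i$'' part, supported on the first $k$ rows of $T^*(T^*)$, and a ``high-$i$'' part, supported on rows $i>k$; these will be handled by Lemma~\ref{take care of the hat} and Lemma~\ref{L: 2} respectively, and since the selection of indices is done only once (by Lemma~\ref{take care of the hat}), the estimates will combine cleanly by the triangle inequality.

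More precisely, for each $j=1,\ldots,M$ set
\[
w_j := P_{[1,k]\times\N}(x_j),\qquad z_j := P_{(k,\infty)\times\N}(x_j),
\]
so that $x_j=w_j+z_j$ and $\|w_j\|,\|z_j\|\leq 1$. Since $\supp(x_j)\subset R_j\setminus R_{j-1}=[1,n_j]^2\setminus[1,n_{j-1}]^2$, an elementary set-theoretic check gives
\[
\supp(w_j)\subset [1,k]\times(n_{j-1},n_j],\qquad \supp(z_j)\subset\bigl((k,n_j]\times[1,n_j]\bigr)\setminus\bigl((k,n_{j-1}]\times[1,n_{j-1}]\bigr),
\]
so the $w_j$'s fit the hypothesis of Lemma~\ref{take care of the hat} exactly (with its $R_j=[1,k]\times[1,n_j]$), and the $z_j$'s, after any monotone reindexing of the $n_j$'s, fit the hypothesis of Lemma~\ref{L: 2}.

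Apply Lemma~\ref{take care of the hat} to $(w_j)_{j=1}^M$ to obtain indices $1\leq j_1<\cdots<j_k\leq M$ such that for all scalars $(a_\ell)_{\ell=1}^k$,
\[
\Bigl\|\sum_{\ell=1}^k a_\ell\, w_{j_\ell}\Bigr\|\leq 2\max_{1\leq\ell\leq k}|a_\ell|.
\]
Next, reindex by setting $\tilde n_0:=k=n_0$ and $\tilde n_\ell:=n_{j_\ell}$ for $\ell=1,\ldots,k$. Since $n_{j_\ell-1}\geq n_{j_{\ell-1}}=\tilde n_{\ell-1}$, the support of $z_{j_\ell}$ is contained in
\[
(k,\tilde n_\ell]\times[1,\tilde n_\ell]\ \setminus\ (k,\tilde n_{\ell-1}]\times[1,\tilde n_{\ell-1}],
\]
so Lemma~\ref{L: 2} applied to $(z_{j_\ell})_{\ell=1}^k$ yields
\[
\Bigl\|\sum_{\ell=1}^k a_\ell\, z_{j_\ell}\Bigr\|\leq 3D_M\max_{1\leq\ell\leq k}|a_\ell|.
\]

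The upper $\ell_\infty^k$-estimate now follows from the triangle inequality:
\[
\Bigl\|\sum_{\ell=1}^k a_\ell\, x_{j_\ell}\Bigr\|\leq \Bigl\|\sum_{\ell=1}^k a_\ell\, w_{j_\ell}\Bigr\|+\Bigl\|\sum_{\ell=1}^k a_\ell\, z_{j_\ell}\Bigr\|\leq (3D_M+2)\max_{1\leq\ell\leq k}|a_\ell|.
\]
For the lower estimate, note that the vectors $x_{j_1},\ldots,x_{j_k}$ have pairwise disjoint supports inside the disjoint ``L-shaped'' regions $R_{j_\ell}\setminus R_{j_\ell-1}$, and the basis of $T^*(T^*)$ is $1$-unconditional, so for the index $\ell_0$ realizing $\max_\ell|a_\ell|$ the norm-one projection $P_{R_{j_{\ell_0}}\setminus R_{j_{\ell_0}-1}}$ sends $\sum_\ell a_\ell x_{j_\ell}$ to $a_{\ell_0}x_{j_{\ell_0}}$, giving $\|\sum_\ell a_\ell x_{j_\ell}\|\geq|a_{\ell_0}|\cdot\|x_{j_{\ell_0}}\|=\max_\ell|a_\ell|$.

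The only subtle step is verifying that the single choice of indices produced by Lemma~\ref{take care of the hat} is simultaneously admissible for Lemma~\ref{L: 2}; the decomposition $x_j=w_j+z_j$ is engineered precisely so that no compatibility condition between the two lemmas is needed beyond the monotonicity of the $n_j$'s, which is automatic.
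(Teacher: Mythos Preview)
Your proof is correct and follows essentially the same route as the paper's: the decomposition $x_j=w_j+z_j$ into the first $k$ rows and the remaining rows, then Lemma~\ref{take care of the hat} to select the indices and Lemma~\ref{L: 2} for the high part. You are in fact more careful than the paper, which omits both the reindexing check $\tilde n_\ell=n_{j_\ell}$ needed to feed $(z_{j_\ell})_{\ell=1}^k$ into Lemma~\ref{L: 2} and the (easy) lower $\ell_\infty^k$-estimate from $1$-unconditionality.
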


\begin{proof}
 For $j=1,2, \ldots, M$ we write $x_j$ as
\begin{align*}
x_j &=\sum_{i=1}^{n_{j-1}}\sum_{s=n_{j-1}+1}^{n_j}  x_j (i,s) e^{(i)}_s+ \sum_{i=n_{j-1}+1}^{n_j}  \sum_{s=1}^{n_j} x_j (i,s) e^{(i)}_s
       =  w_j + z_j,\text{ where}\\
 w_j&=\sum_{i=1}^{k}\sum_{s=n_{j-1}+1}^{n_j}  x_j (i,s) e^{(i)}_s
            \text{ and }
            z_j=\sum_{i=k+1}^{n_{j-1}}\sum_{s=n_{j-1}+1}^{n_j}  x_j (i,s) e^{(i)}_s
              + \sum_{i=n_{j-1}+1}^{n_j}
           \sum_{s=1}^{n_j} x_j (i,s) e^{(i)}_s.
             \end{align*}
Then, $(w_j)_{j=1}^M$ satisfies the assumption of Lemma \ref{take care of the hat} and there exist $1\leq j_1<\cdots<j_k\leq M$ so that $(w_{j_\ell})_{\ell=1}^k$ is dominated by the unit vector basis of $\ell_\infty$ with constant 2. Finally, $(z_{j_\ell})_{\ell=1}^k$ satisfies the assumption of Lemma \ref{L: 2}, i.e., it is dominated by the unit vector basis of $\ell_\infty$ with constant $3D_M$.
\end{proof}

\begin{proof}[Proof of Theorem \ref{T:C}]
We already showed in Lemma \ref{lem:2.8} that $T^*(T^*)$ is not asymptotic $c_0$.
 Secondly, let $k\in\N$ let  $(f_j)_j^M$ be  the basis  of an element of the $M$-th asymptotic structure of $T^*(T^*)$, where $M = k^{k+1}$. Using  a straightforward perturbation argument, there is for any $\vp>0$
 a  block sequence $(x_j)_{j=1}^M$, satisfying the conditions of  Proposition \ref{blocks have subsequence c0},  for some sequence $k<n_1<n_2<\ldots< n_M$,
 which is $(1+\vp)$-equivalent to $(f_j)_{j=1}^M$. Thus, there is a subsequence $\big(f_{j_\ell}\big)_{\ell=1}^k$ that is $(1+\vp)(3DM+2)$-equivalent to the $\ell_\infty ^{k}$-unit basis.
\end{proof}

\section{Final remarks and open problems}\label{S:8}
Although we do not know whether or not the Hamming graphs equi-coarsely embed into $T^*(T^*)$ we now understand that if such embeddings were to exist they would not be of any of the canonical types that we have described in Proposition \ref{P:6.9} and Remark \ref{R:6.10}.

\begin{prob}\label{Prob:7.1}
Is it true that the Hamming graphs do not equi-coarsely embed into any reflexive asymptotic-subsequential-$c_0$ space? In particular, is it true that the Hamming graphs do not equi-coarsely embed into $T^*(T^*)$?
\end{prob}

The class of asymptotic-subsequential-$c_0$ spaces is a new one. This is not surprising as even proving that $T^*(T^*)$ has this property is non-trivial and the motivation for defining this property presented itself only now. A more general theorem can be shown, albeit with a more technical proof.

\begin{thm}
The $T^*$-sum of any sequence of $C$-asymptotic-$c_0$ spaces for a uniform constant $C$ is  asymptotic-subsequential-$c_0$. 
\end{thm}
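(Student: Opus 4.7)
The plan is to generalize the proof of Theorem \ref{T:C} by replacing the structural role of the inner factors $T^*$ with the weakly null tree realization of asymptotic structure given by Lemma \ref{lem:2.7} applied internally to each $X_k$. Write $Y=(\oplus_{k=1}^\infty X_k)_{T^*}$ and fix a target length $k\in\N$. The aim is to find $N=N(k)$ (one expects $N=k^{k+1}$ to suffice) and a uniform constant $C'=C'(C)$ so that every element of $\{Y\}_N$ admits a length-$k$ subsequence that is $C'$-equivalent to the $\ell_\infty^k$-basis. As in Theorem \ref{T:C}, the heart of the argument will be an analogue of Proposition \ref{blocks have subsequence c0}: any suitable nested-rectangle block sequence of length $N$ in $Y$ contains a length-$k$ subsequence that is $C'$-equivalent to $\ell_\infty^k$.

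Given $(f_i)_{i=1}^N\in\{Y\}_N$ and $\vp>0$, one realizes $(f_i)$ (up to distortion $1+\vp$) as an approximate branch of a weakly null tree of height $N$ in $Y$, via Lemma \ref{lem:2.7} when $Y^*$ is separable and directly via the asymptotic-structure game otherwise. A standard perturbation argument with respect to the canonical FDD $(X_k)_k$ on $Y$ then allows refining the tree so that, along each branch, the vectors $x_1,\ldots,x_N$ are supported on nested regions analogous to $R_j\setminus R_{j-1}$ from Proposition \ref{blocks have subsequence c0}: there exist integers $k=n_0<n_1<\cdots<n_N$ such that $x_j$ is outer-supported in $[1,n_j]$ and, for every outer coordinate $s$ with $k<s\leq n_{j-1}$, the piece $P_sx_j$ lies in a suitably ``high'' finite-codimensional subspace of $X_s$. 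Crucially, using Lemma \ref{lem:2.7} inside each $X_s$ and diagonalizing over $s$, one arranges that for every $s$ the sequence of pieces $(P_sx_j)_j$ appearing along a branch forms an approximate branch of a weakly null tree in $X_s$, so that the $C$-asymptotic-$c_0$ hypothesis on $X_s$ delivers the internal $\ell_\infty$-estimates needed to replace \eqref{E:6.1a} and \eqref{E:6.2}.

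With this refinement in hand, split $x_j=w_j+z_j$ with $w_j$ collecting the components in $X_s$ for $s\leq k$ and $z_j$ those for $s>k$. The tail part is handled as in Lemma \ref{L: 2}: for each outer index $i>k$, the projection $P_i(\sum_ja_jz_j)$ is a sum of the form $\sum_{l\geq j(i)}a_l u^{(i)}_l$ with $u^{(i)}_l\in X_i$; by the tree refinement, $(u^{(i)}_l)_l$ is $(C+\vp)$-equivalent to an $\ell_\infty$-segment, and gluing across $i$ via Proposition \ref{P:6.3} on the outer $T^*$ yields $\|\sum_j a_j z_j\|\leq C''\max_j|a_j|$ for some $C''$ depending only on $C$. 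The head part is handled as in Lemma \ref{take care of the hat}: a pigeonhole over the scalar norms $\|P_sw_j\|$, $s\leq k$, extracts a length-$k$ subsequence $j_1<\cdots<j_k$ on which these norms are constant up to error $1/k$; within each $X_s$ the refinement provides the internal $\ell_\infty$-estimate (replacing the appeal to \eqref{E:6.1a}), and combining via the $1$-unconditionality of the outer $T^*$ basis gives $\|\sum_\ell a_\ell w_{j_\ell}\|\leq C'''\max_\ell|a_\ell|$. A triangle inequality combining head and tail then delivers the sought subsequential $\ell_\infty^k$-equivalence with a uniform constant $C'$ depending only on $C$.

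The principal obstacle is the simultaneous tree refinement: arranging both a nested-rectangle outer block structure with respect to the FDD $(X_k)_k$ and, internally for each outer coordinate $s$, an approximate branch-of-weakly-null-tree structure in $X_s$ from which the $C$-asymptotic-$c_0$ hypothesis extracts $\ell_\infty$-estimates for $(P_sx_j)_j$. This requires interlacing Lemma \ref{lem:2.7} applied to $Y$ with Lemma \ref{lem:2.7} applied separately to each $X_s$, and then passing to a common Ramsey-style refinement so that both sets of estimates hold along every branch of the final tree. A secondary issue is uniformity of constants, but since all $X_s$ share the same asymptotic-$c_0$ constant $C$, the final $C'$ depends only on $C$ and on the Tsirelson constant $D_M$ from Proposition \ref{P:6.3}.
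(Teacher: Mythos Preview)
The paper does not actually prove this theorem: it is stated in Section~\ref{S:8} with the remark that ``a more general theorem can be shown, albeit with a more technical proof,'' and no argument is given. Your proposal is precisely the natural generalization of the proof of Theorem~\ref{T:C} that the authors evidently have in mind, and the outline is sound. You correctly identify that the only structural use of the \emph{inner} $T^*$ factors in Lemmas~\ref{L: 2} and~\ref{take care of the hat} is through the block estimates \eqref{E:6.1a}--\eqref{E:6.2}, and that these must be replaced by the $\ell_\infty$-upper estimates supplied by the $C$-asymptotic-$c_0$ hypothesis on each $X_s$, while the outer $T^*$ structure (and in particular Proposition~\ref{P:6.3}) is used exactly as before.

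Two small points are worth sharpening. First, you invoke Lemma~\ref{lem:2.7} inside each $X_s$; that lemma requires separable dual, which is not part of the hypothesis. What you actually need is the dual fact to Definition~\ref{D:3.1}: if $X_s$ is $C$-asymptotic-$c_0$ then the \emph{subspace} player has a winning strategy in the length-$m$ game forcing the resulting normalized sequence to be $(C+\vp)$-dominated by the $\ell_\infty^m$-basis (this is the standard determinacy/compactness argument from \cite{MaureyMilmanTomczak1995}). You should phrase the internal refinement in terms of this winning strategy rather than weakly null trees. Second, in the tail estimate, for an outer index $i$ with $n_{j-1}<i\le n_j$ the first vector $u_j^{(i)}=P_i x_j$ is unconstrained in $X_i$ (the game in $X_i$ has not yet begun when $x_j$ is selected); but this is harmless, since the first move of the subspace player can always be $X_i$ itself, so the sequence $(u_l^{(i)})_{l\ge j}$ is still governed by the asymptotic-$c_0$ strategy from step $j+1$ onward, exactly parallel to the role of \eqref{E:6.2} versus \eqref{E:6.1a}. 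With these adjustments, the interlaced game refinement you describe---playing the subspace role simultaneously in $Y$ (to obtain the nested outer supports, up to perturbation) and in every already-active $X_s$---goes through, and the constants depend only on $C$ and $D_M$ as you say.
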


Such examples contain many asymptotic-$c_0$ subspaces.
\begin{prob}\label{Prob:7.2}
Let $X$ be an infinite dimensional asymptotic-subsequential-$c_0$ space. Does $X$ contain an infinite dimensional asymptotic-$c_0$ subspace?
\end{prob}

Next we describe a particular Banach space and some of its properties which are interesting regarding the study of certain asymptotic properties under a metrical scope. This example is based on the original idea of Szlenk in \cite{Szlenk1968}. It is also related to \cite[Example 4.2]{OdellSchlumprecht2002}. For $1<p<\infty$ and $1\leq q\leq \infty$ we can construct a reflexive Banach space $X_p^{q,\omega}$ with the following property: all asymptotic models generated by normalized weakly null arrays in $X_p^{q,\omega}$ are isometrically equivalent to the unit vector basis of $\ell_p$, yet  $\ell_q^k$ is (isometrically) in the $k$-th asymptotic structure of  $X_p^{q,\omega}$ for every $k\in\N$. Therefore   a statement which is analogous  to Theorem \ref{T:3.5} for $\ell_p$, $1<p<\infty$, cannot be true.

The construction of the space $X_p^{q,\omega}$ that we are about to describe is based on the idea of Szlenk from \cite{Szlenk1968}, and is somewhat similar to \cite[Example 4.2]{OdellSchlumprecht2002}. Fix $1< p <\infty$ and $1\leq q\leq \infty$ and define by induction a sequence of spaces $(X^{q,k}_p)_k$ as follows. Set $X_p^{q,0} = \mathbb{R}$ and then set $X^{q,k}_p = \mathbb{R}\oplus_q\ell_p(X^{q,k-1}_p)$. Finally, define $X_p^{q,\omega} = (\oplus_{k=0}^\infty X_p^{q,k})_p$. Each space $X^{q,k}_p$ is reflexive and so is $X_p^{q,\omega}$. The fact that all asymptotic models generated by normalized weakly null arrays in $X_p^{q,\omega}$ are isometrically equivalent to the unit vector basis of $\ell_p$ can be proved as follows.  Use Proposition \ref{P:3.13} to show by induction that for all $k\in\N$ all the asymptotic models generated by normalized weakly null arrays in $X_p^{q,k}$ are isometrically equivalent to the $\ell_p$-unit vector basis, and use Proposition \ref{P:3.13} one more time to obtain the same conclusion for $X_p^{q,\omega}$. We now turn to the statement about the asymptotic structure of $X_p^{q,\omega}$.

\begin{prop}\label{P:7.6}
Let $p\in(1,\infty)$ and $q\in[1,\infty]$. For every $k\in\mathbb{N}\cup\{0\}$ the space $X^{q,k}_p$ contains a normalized weakly null tree $(x_{\mb}:\mb\in[\mathbb{N}]^{\leq k})$, all branches of which are isometrically equivalent to the unit vector basis of $\ell_q^k$.
\end{prop}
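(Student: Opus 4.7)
The plan is to argue by induction on $k$, exhibiting an explicit tree built from the natural unconditional basis of $X_p^{q,k}$ coming from its iterated direct-sum construction. The base case $k=0$ is vacuous (no branches), and for $k=1$ one takes $x_{\{m\}}$ to be the $m$-th unit vector of the $\ell_p$-summand of $X_p^{q,1} = \R\oplus_q \ell_p$, which is a normalized weakly null sequence of norm-one vectors.

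First I would set up notation. Unfolding the definition, $X_p^{q,k}$ admits a canonical 1-unconditional basis indexed by finite strings $\sigma \in \bigcup_{j=0}^k \N^j$: take $e_\emptyset$ to be the unit vector of the $\R$-summand of $X_p^{q,k}$, and for a non-empty string $(m_1,\ldots,m_j)$ let $e_{(m_1,\ldots,m_j)}$ be the vector of $\ell_p(X_p^{q,k-1})\subset X_p^{q,k}$ whose only non-zero coordinate sits in position $m_1$ and there equals $e_{(m_2,\ldots,m_j)}\in X_p^{q,k-1}$ (defined recursively, and equal to $e_\emptyset$ of $X_p^{q,k-1}$ when $j=1$). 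With this notation, for each $\bar m =\{m_1<\cdots<m_j\}\in [\N]^{\le k}\setminus\{\emptyset\}$ I would set
\[
x_{\bar m} \,=\, e_{(m_1,\ldots,m_j)}.
\]

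Next I would prove the key isometric identity by induction on $k$: for every $m_1<\cdots<m_k$ and every $(a_0,a_1,\ldots,a_k)\in\R^{k+1}$,
\[
\Big\| a_0 e_\emptyset + \sum_{j=1}^k a_j e_{(m_1,\ldots,m_j)} \Big\|_{X_p^{q,k}} \,=\, \Big(\sum_{j=0}^k |a_j|^q\Big)^{1/q},
\]
with the obvious modification when $q=\infty$. The induction step is immediate from the defining norm of $X_p^{q,k}=\R\oplus_q \ell_p(X_p^{q,k-1})$: the $\R$-coordinate contributes $|a_0|^q$, while the $\ell_p$-part is supported on the single coordinate $m_1$ and there equals $a_1 e_\emptyset+\sum_{j=2}^k a_j e_{(m_2,\ldots,m_j)}$ in $X_p^{q,k-1}$, whose norm is $(\sum_{j=1}^k|a_j|^q)^{1/q}$ by the inductive hypothesis; taking $q$-th powers and summing gives the result. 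Specializing to $a_0=0$ shows that every branch $(x_{\{m_1\}},x_{\{m_1,m_2\}},\ldots,x_{\{m_1,\ldots,m_k\}})$ is isometrically equivalent to the unit vector basis of $\ell_q^k$, and in particular each $x_{\bar m}$ has norm $1$.

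Finally, for the weak nullity of the tree I would observe that for a fixed $\bar m=\{m_1<\cdots<m_j\}\in [\N]^{<k}$, the sequence $(x_{\bar m\cup\{n\}})_{n>m_j}$ consists of distinct basis vectors of the canonical 1-unconditional basis of $X_p^{q,k}$ whose supports are pairwise disjoint. Since $X_p^{q,k}$ is a finite iteration of $\ell_p$- and $\ell_q$-sums (with $1<p<\infty$ and, after the first step, with finite-dimensional building blocks in the $\R$-slots leaving only reflexive summands) it is reflexive, so any normalized block sequence with respect to the canonical unconditional basis is weakly null. This yields the required weak nullity at every node of the tree. The construction is essentially forced by the iterated direct-sum structure, and I do not foresee a serious obstacle; the only item requiring care is the bookkeeping showing that the recursive ``basis'' really is a 1-unconditional Schauder basis, which follows routinely from the fact that each $\oplus_q$ and $\ell_p$ step preserves a 1-unconditional basis.
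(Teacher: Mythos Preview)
Your proof is correct and follows essentially the same inductive construction as the paper: both place $x_{\{m_1,\ldots,m_j\}}$ in the $m_1$-th copy of $X_p^{q,k-1}$ and recurse, with the only cosmetic difference being that the paper shifts the inner indices by $m_1$ (writing $x^{m_1}_{\{m_2-m_1,\ldots,m_j-m_1\}}$) while you index by arbitrary strings $(m_1,\ldots,m_j)$. Your presentation is in fact more explicit than the paper's, spelling out the norm identity and the weak-nullity argument via reflexivity, both of which the paper leaves implicit.
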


\begin{proof}
For $k=0$ pick a norm-one vector $x_\emptyset$ in $X^{q,0}_p = \mathbb{R}$. Let now $X^{q,k}_p = \mathbb{R}\oplus_q\ell_p(X^{q,k-1}_p)$ and let for each $i\in\mathbb{N}$ $(x^{(i)}_{\mb}: \mb\in[\mathbb{N}]^{\leq k-1})$ be a    normalized weakly null tree in the $i$'th copy of $X^{q,k-1}_p$ all branches of which are isometrically equivalent to the unit vector basis of $\ell_q^{k-1}$. Take $x_\emptyset$ to be a norm-one vector in $X^{q,k}_p$ that resides in $\mathbb{R}$ (the left part of the sum $X^{q,k}_p =\mathbb{R}\oplus_q\ell_p(X^{q,k-1}_p)$) and for $1\leq n\leq k$ and $\mb = \{m_1,\ldots,m_n\}$ define
$x_{\mb} = x^{m_1}_{\{m_2-m_1,\ldots,m_n-m_1\}}$ (in particular, for $\mb = \{m\}$, $x_{\mb} = x^m_\emptyset$).
\end{proof}

\begin{remark}\label{R:7.7}
For each $k\in\mathbb{N}\cup\{0\}$ the collection $(x_{\mb}: \mb\in[\mathbb{N}]^{\leq k})$ forms a 1-unconditional basis of $X^{q,k}_p$. Hence, the space $X_p^{q,\omega}$ has an unconditional basis.
\end{remark}

As previously mentioned, it follows from \cite[Lemma 3.5]{BLMS_JIMJ20} that every asymptotic space of $X_p^{q,\omega}$ is realized by a countably branching normalized weakly null tree and thus we obtain:

\begin{cor}\label{C:7.8}
Let $p\in(1,\infty)$ and $q\in[1,\infty]$. For every $k\in\mathbb{N}$ the unit vector basis of $\ell_q^k$ is in $\{X_p^{q,\omega}\}_k$.
\end{cor}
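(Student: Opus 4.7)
The plan is to deduce Corollary \ref{C:7.8} from Proposition \ref{P:7.6} together with the (well-known) fact that a weakly null tree of height $k$ all of whose branches are $C$-equivalent to a fixed basis $(e_i)_{i=1}^k$ witnesses that $(e_i)_{i=1}^k$ belongs to the $k$-th asymptotic structure with the same constant. The Banach space $X_p^{q,\omega}$ is separable and reflexive, so this passage from branches of weakly null trees to asymptotic structure is the converse direction of Lemma \ref{lem:2.7} and is implicit in the remark preceding the corollary citing \cite[Lemma 3.5]{BLMS_JIMJ20}.

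Concretely, I would first fix $k\in\N$ and apply Proposition \ref{P:7.6} to obtain a normalized weakly null tree $(x_{\mb}:\mb\in[\N]^{\leq k})$ in $X_p^{q,k}$, where every branch $(x_{\{m_1\}},x_{\{m_1,m_2\}},\ldots,x_{\{m_1,\ldots,m_k\}})$ is isometrically equivalent to the unit vector basis $(e_i)_{i=1}^k$ of $\ell_q^k$. Via the canonical isometric inclusion $X_p^{q,k}\hookrightarrow X_p^{q,\omega}$, we transport this tree into $X_p^{q,\omega}$, where it remains normalized and weakly null.

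Next, I would verify the asymptotic game \eqref{E:3.1.1} directly. Given $\varepsilon>0$ and any $X_1,\ldots,X_k\in\cof(X_p^{q,\omega})$, I would select indices $m_1<m_2<\cdots<m_k$ by induction so that $\mathrm{dist}(x_{\{m_1,\ldots,m_j\}},S_{X_j})<\varepsilon/2^j$ for each $j$. This is possible because, at each step $j$, the sequence $(x_{\{m_1,\ldots,m_{j-1},i\}})_i$ is weakly null in a reflexive space, and any weakly null sequence in a Banach space has, for every finite codimensional subspace $Y$ and every $\delta>0$, infinitely many elements within distance $\delta$ of $S_Y$ (pass to the quotient by $Y$ and use weak nullity to pick a small-norm element, then perturb to the sphere of $Y$). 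Once the branch is chosen, I select $y_j\in S_{X_j}$ with $\|y_j-x_{\{m_1,\ldots,m_j\}}\|<\varepsilon/2^j$; a standard small-perturbation argument then yields $(y_j)_{j=1}^k\sim_{1+\varepsilon}(x_{\{m_1,\ldots,m_j\}})_{j=1}^k=_{\mathrm{iso}}(e_i)_{i=1}^k$.

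This produces exactly the data demanded by Definition \ref{D:3.1} with constant $1+\varepsilon$; since $\varepsilon>0$ was arbitrary and $\{X_p^{q,\omega}\}_k$ is closed in $(\cE_k,\delta_k)$, the unit vector basis of $\ell_q^k$ lies in $\{X_p^{q,\omega}\}_k$. The main (mild) technical obstacle is the inductive selection of the branch along finite codimensional subspaces; this is routine reflexivity-plus-perturbation, so no new ideas are needed beyond what is already in Lemma \ref{lem:2.7}.
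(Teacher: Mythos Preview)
Your argument is correct and follows the same route as the paper: deduce the corollary from Proposition \ref{P:7.6} by playing the asymptotic game along branches of the weakly null tree, using that normalized weakly null sequences are asymptotically absorbed by any finite-codimensional subspace. The paper does not spell this out, merely citing \cite[Lemma 3.5]{BLMS_JIMJ20} for the tree--asymptotic-structure correspondence in reflexive spaces; your write-up makes that passage explicit.
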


Recall the following notions of {\em asymptotic uniform convexity} and  {\em asymptotic uniform smoothness} that were introduced originally by Milman in \cite{Milman1971}, and with the following notation and terminology in \cite{JohnsonLindenstraussPreissSchechtman2002}.

\begin{defin}\label{D:7.9}
For a Banach space $X$ the {\em modulus of  asymptotic uniform smoothness\/} $\bar\rho_X(t)$ is given for $t>0$ by
$$\bar \rho_X(t) = \sup_{x\in S_X} \inf_{Y\in\cof(X)} \sup_{y\in S_Y}\|x+ty\| -1\ .$$

The modulus of asymptotic uniformly convexity $\bar\delta_X(t)$ is given for $t>0$ by
$$\bar\delta_X(t) = \inf_{x\in S_X} \sup_{Y\in\cof(X)}\inf_{y\in S_Y} \|x+ty\| -1\ .$$

$X$ is called  {\em asymptotically uniformly smooth} (AUS) if $\lim_{t\to 0^+} \bar \rho_X(t)/t = 0$, and
$X$ is called  {\em asymptotically uniformly convex\/} (AUC) if for $t>0$, $\bar \delta_X (t) >0$.
\end{defin}

Note that, as it was shown in \cite{BKL2010}, within the class of reflexive Banach spaces the subclass of reflexive spaces that admit an equivalent
asymptotic uniformly smooth norm (i.e., they are {\em AUS-able}) and admit an equivalent asymptotic uniformly convex norm (i.e., they are {\em AUC-able})
is coarse Lipschitzly rigid. It was later proved in \cite{BCDKRSZ2017} that, within the class of reflexive spaces with  an unconditional
asymptotic structure, the subclass of such spaces that are additionally AUC-able is coarse Lipschitzly rigid.
Whithin this context we are also inclined to study the metric properties of AUS-able spaces.
It is known that whenever a Banach space $X$ coarse Lipschitzly embeds into a reflexive AUS-able space $Y$ then $X$ is reflexive \cite{BKL2010}*{Theorem 4.1}.
We recall the important Problem $2$ from \cite{GLZ2014}.
\begin{prob}\label{Prob:7.10}
Is the class of reflexive AUS-able spaces coarse Lipschitzly rigid?
\end{prob}

We observe that an approach using asymptotic models to characterize reflexive AUS-able spaces in terms of equi-coarse-Lipschitz embeddability of the Hamming graphs, or similar metric spaces, is not easily possible. In particular, the space $X_2^{1,\omega}$ is a reflexive non-AUS-able space with an unconditional basis with only isometric $\ell_2$ asymptotic models. In other words, the information gained from knowing all the asymptotic models of this space cannot be used to reveal that the space is non-AUS-able.

\begin{cor}
\label{C:7.11}
Let $p\in(1,\infty)$. The space $X_p^{1,\omega}$ is non-AUS-able.
\end{cor}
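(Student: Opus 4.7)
The plan is to derive a contradiction from the assumption that $X_p^{1,\omega}$ admits an equivalent AUS norm, by exploiting Corollary \ref{C:7.8}, which provides an isometric copy of the $\ell_1^k$ unit vector basis inside $\{X_p^{1,\omega}\}_k$ for every $k\in\N$. Suppose, for contradiction, that $\|\cdot\|'$ is an equivalent AUS norm on $X_p^{1,\omega}$, denote the resulting renormed space by $X'$, and let $a,b>0$ satisfy $a\|\cdot\|\leq \|\cdot\|'\leq b\|\cdot\|$. By a well-known renorming result (Knaust--Odell--Schlumprecht, and Godefroy--Kalton--Lancien), the AUS property is equivalent to power-type AUS, so we may additionally assume that $\bar\rho_{X'}(t)\leq Ct^q$ for some $C>0$ and some $q>1$.

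The key analytic step is the following well-known consequence of a power-type modulus of asymptotic uniform smoothness: there exists $K=K(C,q)>0$ such that for every $k\in\N$ and every $(f_i)_{i=1}^k\in\{X'\}_k$ one has the upper $\ell_q$ estimate
\[
\Big\|\sum_{i=1}^k a_i f_i\Big\|'\leq K\Big(\sum_{i=1}^k|a_i|^q\Big)^{1/q}
\]
for all scalars $(a_i)_{i=1}^k$. I would obtain this by an iterative argument in which the asymptotic-structure vector game of Definition \ref{D:3.1} is played against the modulus game: at each step, the finite codimensional subspace $X_{i+1}$ is chosen so as to witness the infimum in the definition of $\bar\rho_{X'}$ applied to the normalized partial sum $s_i/\|s_i\|'$ (where $s_i=\sum_{j\leq i}a_jx_j$), which forces $\|s_{i+1}\|'\leq \|s_i\|'\bigl(1+C(|a_{i+1}|/\|s_i\|')^q\bigr)+\vp_i$; iterating this recursion and rearranging yields the desired upper $\ell_q$ estimate with a constant depending only on $C$ and $q$.

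The third ingredient is that the $k$-th asymptotic structure is preserved, up to a fixed multiplicative constant, under equivalent renormings: renormalizing the vectors of any realization of the $\ell_1^k$-basis from $\{X_p^{1,\omega}\}_k$ supplied by Corollary \ref{C:7.8} so that they have unit $\|\cdot\|'$-norm, and absorbing a small monotonicity perturbation available by the compactness of $(\cE_k,\delta_k)$, one obtains an element of $\{X'\}_k$ that is $(b/a)^2$-equivalent to the $\ell_1^k$ unit vector basis, with constant independent of $k$. Applying the upper $\ell_q$ estimate to this element with coefficients $a_i=1$ for $i=1,\ldots,k$ yields $(a/b)^2 k\leq K k^{1/q}$, hence $k^{1-1/q}\leq K(b/a)^2$; since $q>1$, letting $k\to\infty$ produces the desired contradiction.

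The main obstacle is the key analytic step: the passage from a power-type AUS modulus to uniform upper $\ell_q$ estimates on the full $k$-th asymptotic structure. Although conceptually standard, it requires careful coupling of the two vector games described above, and the bookkeeping of the error terms $\vp_i$ needs to be organized so that the final constant $K$ depends only on $C$ and $q$. The initial reduction to a power-type modulus is also a nontrivial but well-established renorming input.
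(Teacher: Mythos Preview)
Your proposal is correct and follows essentially the same approach as the paper's proof: both deduce a contradiction from Corollary~\ref{C:7.8} by using that an AUS-able space has all its asymptotic spaces uniformly dominated by the unit vector basis of some $\ell_q$, $q>1$. The only difference is that the paper invokes this implication as a black box (citing \cite[Theorem 3]{OdellSchlumprecht2006}), whereas you unpack it by first passing to a power-type AUS renorming and then sketching the iterative game argument that yields the uniform upper $\ell_q$ estimate on $\{X'\}_k$; your route is thus more self-contained but longer, while the paper's one-line citation is faster.
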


\begin{proof}
By \cite[Theorem 3]{OdellSchlumprecht2006} if a Banach space with separable dual is AUS-able then there exists a $1<p<\infty$ so that all of its asymptotic spaces are uniformly dominated by the unit vector basis of $\ell_p$. Since by Corollary \ref{C:7.8},
 $\ell_1^k$ is in $\{X_p^{1,\omega}\}_k$, this space cannot be AUS-able.
\end{proof}

\begin{bibsection}

\begin{biblist}[\bibliographystyle{alpha}] 

\bib{AGLM2017}{article}{
  author={Argyros, S. A.},
  author={Georgiou, A.},
  author={Lagos, A.-R.},
  author={Motakis, P.},
  title={Joint spreading models and uniform approximation of bounded operators},
  journal={Studia Math.},
  volume={253},
  year={2020},
  number={1},
  pages={57--107},
}

\bib{AKT2013}{article}{
   author={Argyros, S. A.},
   author={Kanellopoulos, V.},
   author={Tyros, K.},
   title={Finite order spreading models},
   journal={Adv. Math.},
   volume={234},
   date={2013},
   pages={574--617},
   issn={0001-8708},
}

\bibitem[Bal13]{Ball13}
K.~Ball.
\newblock The {R}ibe programme.
\newblock {\em Ast\'erisque}, (352):Exp. No. 1047, viii, 147--159, 2013.
\newblock S{\'e}minaire Bourbaki. Vol. 2011/2012. Expos{\'e}s 1043--1058.

\bib{Baudier2007}{article}{
  author={Baudier, F.},
  title={Metrical characterization of super-reflexivity and linear type of Banach spaces},
  journal={Arch. Math.},
  volume={89},
  date={2007},
  pages={419\ndash 429},
}
\bib{BCDKRSZ2017}{article}{
  author={Baudier, F.},
  author={ Causey, R.},
  author={Dilworth, S.},
  author={Kutzarova, D.},
  author={Randrianarivony, N. L.},
  author={Schlumprecht, Th.},
  author={Zhang, S.},
  title={On the geometry of the countably branching diamond graphs},
  journal={J. Funct. Anal.},
  volume={273},
  year={2017},
  number={10},
  pages={3150--3199},
}

\bib{BKL2010}{article}{
  author={Baudier, F.},
  author={Kalton, N. J.},
  author={Lancien, G.},
  title={A new metric invariant for Banach spaces},
  journal={Studia Math.},
  volume={199},
  date={2010},
  pages={73-94},
}

\bib{BLMS_JIMJ20}{article}{
  author={Baudier, F.},
  author={Lancien, G.},
  author={Motakis, P.},
  author={Schlumprecht, Th.},
  title={A new coarsely rigid class of Banach spaces},
  year={2020},
  journal={to appear in J. Inst. Math. Jussieu},
}

\bib{BaudierLancienSchlumprecht2018}{article}{
  author={Baudier, F.},
  author={Lancien, G.},
  author={Schlumprecht, Th.},
  title={The coarse geometry of {T}sirelson's space and applications},
  journal={J. Amer. Math. Soc.},
  fjournal={Journal of the American Mathematical Society},
  volume={31},
  year={2018},
  number={3},
  pages={699--717},
  issn={0894-0347},
  mrclass={46B20 (05C63 20F65 46B85 46T99)},
  mrnumber={3787406},
}

\bib{Bossard1997}{article}{
  author={Bossard, B.},
  title={An ordinal version of some applications of the classical interpolation theorem},
  journal={Fund. Math.},
  volume={152},
  date={1997},
  number={1},
  pages={55--74},
}

\bib{Bossard2002}{article}{
  author={Bossard, B.},
  title={A coding of separable {B}anach spaces. {A}nalytic and coanalytic families of {B}anach spaces},
  journal={Fund. Math.},
  fjournal={Fundamenta Mathematicae},
  volume={172},
  year={2002},
  number={2},
}

\bib{Bourgain1986a}{article}{
   author={Bourgain, J.},
   title={The metrical interpretation of superreflexivity in Banach spaces},
   journal={Israel J. Math.},
   volume={56},
   date={1986},
   pages={222--230},
}

\bib{BrunelSucheston1974}{article}{
  author={Brunel, A.},
  author={Sucheston, L.},
  title={On $B$-convex Banach spaces},
  journal={Math. Systems Theory},
  volume={7},
  date={1974},
  pages={294\ndash 299},
}

\bib{CasazzaOdell1983}{article}{
 AUTHOR = {Casazza, P. G.} ,
AUTHOR={Odell, E.},
     TITLE = {Tsirelson's space and minimal subspaces},
 BOOKTITLE = {Texas functional analysis seminar 1982--1983 ({A}ustin,
              {T}ex.)},
    SERIES = {Longhorn Notes},
     PAGES = {61--72},
 PUBLISHER = {Univ. Texas Press, Austin, TX},
      YEAR = {1983},
   MRCLASS = {46B20},
  MRNUMBER = {832217},
}

\bib{CasazzaShura1989}{book}{
  author={Casazza, P. G.},
  author={Shura, T. J.},
  title={Tsirel\cprime son's space},
  series={Lecture Notes in Mathematics},
  volume={1363},
  note={With an appendix by J. Baker, O. Slotterbeck and R. Aron},
  publisher={Springer-Verlag},
  place={Berlin},
  date={1989},
}

\bib{DodosFerenczi2008}{article}{
  author={Dodos, P.},
  author={Ferenczi, V.},
  title={Some strongly bounded classes of {B}anach spaces},
  journal={Fund. Math.},
  fjournal={Fundamenta Mathematicae},
  volume={193},
  year={2007},
  number={2},
  pages={171--179},
  issn={0016-2736},
  mrclass={46B03 (03E15 46B99)},
  mrnumber={2282714},
  mrreviewer={Matias Raja},
  url={https://doi.org/10.4064/fm193-2-5},
}

\bib{HalbeisenOdell2004}{article}{
  author={Halbeisen, L.},
  author={ Odell, E.},
  title={On asymptotic models in {B}anach spaces},
  journal={Israel J. Math.},
  fjournal={Israel Journal of Mathematics},
  volume={139},
  year={2004},
  pages={253--291},
  issn={0021-2172},
  mrclass={46B15 (05D10 46B20 46B45)},
  mrnumber={2041794},
  mrreviewer={George Androulakis},
  url={https://doi-org.lib-ezproxy.tamu.edu:9443/10.1007/BF02787552},
}

\bib{FigielJohnson1974}{article}{
  author={Figiel, T.},
  author={Johnson, W. B.},
  title={A uniformly convex Banach space which contains no $l\sb {p}$},
  journal={Compositio Math.},
  volume={29},
  date={1974},
  pages={179\ndash 190},
}

\bib{FOSZ2017}{article}{
  author={Freeman, D.},
  author={Odell, E.},
  author={Sari, B.},
  author={Zheng, B.},
  title={On spreading sequences and asymptotic structures},
  journal={Trans. Amer. Math. Soc.},
  volume={370},
  date={2018},
  pages={6933--6953},
}

\bib{gowersmaurey1997}{article}{
    AUTHOR = {Gowers, W. T.}
    Author={Maurey, B.},
     TITLE = {Banach spaces with small spaces of operators},
   JOURNAL = {Math. Ann.},
  FJOURNAL = {Mathematische Annalen},
    VOLUME = {307},
      YEAR = {1997},
    NUMBER = {4},
     PAGES = {543--568},
      ISSN = {0025-5831},
     CODEN = {MAANA},
   MRCLASS = {46B20 (46B15 46B28 47A53)},
  MRNUMBER = {1464131 (98g:46018)},
MRREVIEWER = {G. J. O. Jameson},
       DOI = {10.1007/s002080050050},
       URL = {http://dx.doi.org/10.1007/s002080050050},
}

\bib{GLZ2014}{article}{
  author={Godefroy, G.},
  author={Lancien, G.},
  author={Zizler, V.},
  title={The non-linear geometry of {B}anach spaces after {N}igel {K}alton},
  journal={Rocky Mountain J. Math.},
  fjournal={The Rocky Mountain Journal of Mathematics},
  volume={44},
  year={2014},
  number={5},
  pages={1529--1583},
}

\bib{James1950}{article}{
author={James, R.C.}
title={Bases and reflexivity in Banach spaces}
journal={Ann. of Math}
volume={52}
year={1950}
pages={518--527}
}

\bib{James1964}{article}{
author={James, R.C.}
title={Uniformly no square Banach spaces}
journal={Ann. of Math}
volume={80}
year={1964}
pages={542--550}
}

\bib{Johnson1976b}{article}{
  author={Johnson, W. B.},
  title={A reflexive Banach space which is not sufficiently Euclidean},
  journal={Studia Math.},
  volume={55},
  year={1976},
  pages={201--205},
}

\bib{JohnsonLindenstraussPreissSchechtman2002}{article}{
  author={Johnson, W. B.},
  author={Lindenstrauss, J.},
  author={Preiss, D.},
  author={Schechtman, G.},
  title={Almost Fr\'echet differentiability of Lipschitz mappings between infinite-dimensional Banach spaces},
  journal={Proc. London Math. Soc. (3)},
  volume={84},
  date={2002},
  pages={711--746},
}

\bib{KaltonRandrianarivony2008}{article}{
author={Kalton, N. J.},
author={Randrianarivony, N. L.},
title={The coarse Lipschitz structure of $\ell_p\oplus\ell_q$},
journal={Math. Ann.},
volume={341},
date={2008}
pages={223--237},
}

\bib{Kechris1995}{book}{
  author={Kechris, A. },
  title={Classical descriptive set theory},
  series={Graduate Texts in Mathematics},
  volume={156},
  publisher={Springer-Verlag, New York},
  year={1995},
  pages={xviii+402},
  isbn={0-387-94374-9},
  mrclass={03E15 (03-01 03-02 04A15 28A05 54H05 90D44)},
  mrnumber={1321597},
  mrreviewer={Jakub Jasi\'nski},
  doi={10.1007/978-1-4612-4190-4},
  url={https://doi-org.lib-ezproxy.tamu.edu:9443/10.1007/978-1-4612-4190-4},
}

\bib{Kurka2017}{article}{
  author={Kurka, O.},
  title={Tsirelson-like spaces and complexity of classes of Banach spaces},
  journal={Rev. R. Acad. Cienc. Exactas Fisic. Nat. Ser. A Math. RACSAM},
  date={2017},
}

\bib{MaureyMilmanTomczak1995}{article}{
  author={Maurey, B.},
  author={Milman, V. D.},
  author={Tomczak-Jaegermann, N.},
  title={Asymptotic infinite-dimensional theory of Banach spaces},
  conference={ title={Geometric aspects of functional analysis}, address={Israel}, date={1992--1994}, },
  book={ series={Oper. Theory Adv. Appl.}, volume={77}, publisher={Birkh\"auser}, place={Basel}, },
  date={1995},
  pages={149--175},
}

\bib{Milman1971}{article}{
  author={Milman, V. D.},
  title={Geometric theory of Banach spaces. II. Geometry of the unit ball},
  language={Russian},
  journal={Uspehi Mat. Nauk},
  volume={26},
  date={1971},
  pages={73\ndash 149},
  note={English translation: Russian Math. Surveys {\bf 26} (1971), 79--163},
}

\bibitem[Nao12]{Naor12}
A.~Naor.
\newblock An introduction to the {R}ibe program.
\newblock {\em Jpn. J. Math.}, 7(2):167--233, 2012.

\bib{Naor18}{article}{
  AUTHOR = {Naor, A.},
     TITLE = {Metric dimension reduction: a snapshot of the {R}ibe program},
 BOOKTITLE = {Proceedings of the {I}nternational {C}ongress of
              {M}athematicians---{R}io de {J}aneiro 2018. {V}ol. {I}.
              {P}lenary lectures},
     PAGES = {759--837},
 PUBLISHER = {World Sci. Publ., Hackensack, NJ},
      YEAR = {2018},
   MRCLASS = {46B85},
  MRNUMBER = {3966745},
}

\bib{OdellSchlumprecht2002}{article}{
  author={Odell, E.},
  author={{Sch}lumprecht, Th.},
  title={Trees and branches in Banach spaces},
  journal={Trans. Amer. Math. Soc.},
  volume={354},
  date={2002},
  pages={4085--4108 (electronic)},
}

\bib{OdellSchlumprecht2006}{article}{
  author={Odell, E.},
  author={Schlumprecht, Th.},
  title={Embeddings into Banach spaces with finite dimensional decompositions},
  journal={Revista Real Acad. Cienc. Serie A Mat.},
  volume={100},
  date={2006},
  pages={295\ndash 323},
}

\bib{OdellSchlumprechtZsak2008}{article}{
  author={Odell, E.},
  author={Schlumprecht, Th.},
  author={Zs\'ak, A.},
  title={On the structure of asymptotic {$l_p$} spaces},
  journal={Q. J. Math.},
  fjournal={The Quarterly Journal of Mathematics},
  volume={59},
  year={2008},
  number={1},
  pages={85--122},
  issn={0033-5606},
  mrclass={46B03 (46B15)},
  mrnumber={2392502},
  mrreviewer={Pandelis Dodos},
  url={http://dx.doi.org.lib-ezproxy.tamu.edu:2048/10.1093/qmath/ham026},
}

\bib{Szlenk1968}{article}{
  author={Szlenk, W.},
  title={The non existence of a separable reflexive Banach space universal for all separable reflexive Banach spaces},
  journal={Studia Math.},
  volume={30},
  date={1968},
  pages={53\ndash 61},
}

\bib{Schlumprecht1991}{article}{
    author={Schlumprecht, Th.},
     title={An arbitrarily distortable Banach space},
   journal={Israel J. Math.},
    volume={76},
      date={1991},
     pages={81\ndash 95},
}

\bib{Tsirelson1974}{article}{
  author={Tsirel'son, B. S.},
  title={Not every Banach space contains an imbedding of $l_p$ or $c_0$},
  journal={Funct. Anal. Appl.},
  year={1974},
  volume={8},
  number={2},
  pages={138--141},
}

\end{biblist}

\end{bibsection}

\end{document}